\newcommand{\details}[1]{$\,$\\***DETAILS {\bf #1}***\\}
\newcommand{\solutionsem}[1]{} 
\newcommand{\veranstaltung}{Techniques of Combinatorial Optimization}
\newcommand{\veranstalter}{Lauren\c tiu Leu\c stean}
\newcommand{\semester}{Winter Term 2014/2015}
\newcounter{uebno}
\newsavebox{\uebung}
\savebox{\uebung}{\raisebox{-11pt}{\parbox[t]{6cm}{\bf FMI, CS, Master I\\ \noindent \semester\\ \veranstaltung\\
                                          \veranstalter}}}
\newcounter{ct}
\newtheorem{theorem}{Theorem}[section]
\newtheorem{definition}[theorem]{Definition}
\newtheorem{proposition}[theorem]{Proposition}
\newtheorem{corollary}[theorem]{Corollary}
\newtheorem{lemma}[theorem]{Lemma}
\newtheorem{fact}[theorem]{Remark}
\newtheorem{exemplu}[theorem]{Example}
\newtheorem{exercise}{Exercise}
\newtheorem{notation}[theorem]{Notation}
\newcommand{\bdfn}{\begin{definition}}
\newcommand{\edfn}{\end{definition}}
\newcommand{\bthm}{\begin{theorem}}
\newcommand{\ethm}{\end{theorem}}
\newcommand{\bprop}{\begin{proposition}}
\newcommand{\eprop}{\end{proposition}}
\newcommand{\bcor}{\begin{corollary}}
\newcommand{\ecor}{\end{corollary}}
\newcommand{\blem}{\begin{lemma}}
\newcommand{\elem}{\end{lemma}}
\newcommand{\bfact}{\begin{fact}}
\newcommand{\efact}{\end{fact}}
\newcommand{\bex}{\begin{exemplu}\begin{rm}}
\newcommand{\eex}{\end{rm}\end{exemplu}}
\newcommand{\bxc}{\begin{exercise}}
\newcommand{\exc}{\end{exercise}}
\newcommand{\bntn}{\begin{notation}}
\newcommand{\entn}{\end{notation}}
\newcommand{\be}{\begin{enumerate}}
\newcommand{\ee}{\end{enumerate}}
\newcommand{\bce}{\begin{center}}
\newcommand{\ece}{\end{center}}
\newcommand{\bi}{\begin{itemize}}
\newcommand{\ei}{\end{itemize}}
\newcommand{\bt}{\begin{tabular}}
\newcommand{\et}{\end{tabular}}
\newcommand{\beq}{\begin{equation}}
\newcommand{\eeq}{\end{equation}}
\newcommand{\ba}{\begin{array}} 
\newcommand{\ea}{\end{array}}
\newcommand {\bea} {\begin{eqnarray}}
\newcommand {\eea} {\end {eqnarray}}
\newcommand {\bua} {\begin{eqnarray*}}
\newcommand {\eua} {\end {eqnarray*}}
\newcommand{\bcev}{\vspace{-0.2cm}\begin{center}}
\newcommand{\ecev}{\end{center}\vspace{-0.2cm}}
\newcommand{\bceuv}{\vspace{-0.5mm}\begin{center}}
\newcommand{\eceuv}{\end{center}\vspace{-0.5mm}}
\newcommand{\Ra}{\Rightarrow}
\newcommand{\Lra}{\Leftrightarrow}
\newcommand{\se}{\subseteq}
\newcommand{\ds}{\displaystyle}
\newcommand{\ol}{\overline}
\newcommand{\lan}{\left\langle}
\newcommand{\ran}{\right\rangle}
\def\N{{\mathbb N}}
\def\R{{\mathbb R}}
\newcommand{{\Fdo}}{F}
\newcommand{\WKL}{{\sf WKL}_0}
\newcommand{\RCA}{{\sf RCA}_0}
\newcommand{\Br}{\mathcal{T_B}}
\newcommand{\Ms}{\mathcal{T_M}}
\newcommand{\bAC}{\sf{bAC}}
\newcommand{\BC}{\sf{bC}}
\newcommand{\bfi}{{\rm bfi}}
\newcommand{\foralltilde}{\tilde{\forall}}
\newtheorem*{metatheorem}{First metatheorem}
\newtheorem*{metatheorem_2}{Second metatheorem}
\newtheorem*{metatheorem_3}{Third metatheorem}
\newcommand{\usf}{\omega}
\newcommand{\usw}{\gamma}
\newcommand{\usr}{r}
\newcommand{\fseco}[1]{\widehat{#1}}
\newcommand{\fsec}[1]{\ol{#1}}
\renewcommand{\details}[1]{}
\title{On the removal of weak compactness arguments in proof mining}
\author{Fernando Ferreira${}^{a}$, Lauren\c{t}iu Leu\c{s}tean${}^{b,c,d}$  and Pedro Pinto${}^{a}$\\[2mm]
\footnotesize ${}^{a}$ Departamento de Matem\'atica, Faculdade de Ci\^encias, Universidade de Lisboa,\\ 
\footnotesize Campo Grande, Ed. C6, 1749-016 Lisboa, Portugal \\ [1mm]
\footnotesize ${}^{b}$ The Research Institute of the University of Bucharest (ICUB), University of Bucharest,\\
\footnotesize Bd. M. Kog\u{a}lniceanu 36-46, 050107, Bucharest, Romania\\[1mm]
\footnotesize ${}^{c}$Faculty of Mathematics and Computer Science, University of Bucharest,\\
\footnotesize  Academiei 14, 010014, Bucharest, Romania\\[1mm]
\footnotesize ${}^d$Simion Stoilow Institute of Mathematics of the Romanian Academy,\\
\footnotesize Calea Grivi\c tei 21, 010702 Bucharest, Romania \\[2mm]
\footnotesize E-mails:  fjferreira@fc.ul.pt, laurentiu.leustean@unibuc.ro, pedrosantospinto@hotmail.com
}
\date{}
\begin{document}

\maketitle

\begin{abstract} The main observation of this paper is that some sequential weak compactness arguments in Hilbert space theory can be replaced by Heine/Borel compactness arguments  (for the strong topology). Even though the latter form of compactness fails in (infinite-dimensional) Hilbert spaces, it nevertheless trivializes under the so-called bounded functional interpretation. As a consequence, the proof mining program of extracting computational bounds from ordinary proofs of mathematics can be applied to {\em modified proofs} which use these false Heine/Borel compactness arguments. Additionally, the bounded functional interpretation provides good logical guidance in formulating quantitative versions of analytical statements. We illustrate these claims with three minings. The bounded  functional interpretation is here used for the first time in proof mining. \\

\noindent {\em Keywords:} Proof mining; Bounded functional interpretation; Rates of metastability; Nonexpansive mappings; Quantitative versions; Conservation results.\\

\noindent  {\it Mathematics Subject Classification 2010}: 03F10; 03F35; 47H25; 47H09; 47J25; 47H10.

\end{abstract}

\section{Introduction}

Proof mining is a research program whose aim is to extract computational information from proofs in mathematics. There are two main aspects in the practice of proof mining. One relevant aspect is finding a quantitative version of the theorem under analysis. For instance, many theorems analyzed with the proof mining methodology say that, under certain circunstances, a given sequence converges. This is the case with the three results mined in this paper. Suppose that the theorem asserts that a given sequence $(u_n)_{n\in \N}$ of elements of a Hilbert space converges. This is equivalent to saying that the sequence enjoys the Cauchy property:

$$\forall k \in \N \, \exists n \in \N \, \forall m \in \N \,\forall i,j \in [n,m] \left( \Vert u_i - u_j \Vert < \frac{1}{k+1} \right),$$

where $[n,m] := \{n,n+1,\ldots,m\}$, for any $n,m\in\N$. In general, there is no computable rate of Cauchyness, i.e., there is no computable numerical function $h: \N \to \N$ such that

$$\forall k \in \N \, \forall m \in \N \, \forall i,j \in [h(k),m] \; \left( \Vert u_i - u_j \Vert < \frac{1}{k+1} \right)$$

There are, in fact, already counterexamples for this assertion in $\R$ with computable Cauchy sequences of rational numbers. Instead of the Cauchy property, we consider a quantitative version thereof:

$$\forall k \in \N , f \in \N^\N \, \exists n \in \N \, \forall i,j \in [n,f(n)] \; \left( \Vert u_i - u_j \Vert < \frac{1}{k+1} \right)$$

This is called the {\em metastable} version of the Cauchy property. The reader can stare a bit at the Cauchy property and its metastable version and convince himself that they are equivalent (suggestion: fix $k$ and negate what follows). A proof mining analysis provides a concrete computable functional $\phi: \N \times \N^\N \to \N$ (a rate of metastability) such that

$$\forall k \in \N, f \in \N^\N \, \exists n \leq \phi(k,f) \,\, \forall i,j \in [n,f(n)] \; \left( \Vert u_i - u_j \Vert < \frac{1}{k+1} \right)$$

In the proof mining analyses of this paper, we obtain concrete computable functionals as above for the sequences under consideration. This is the other main aspect of a proof mining analysis. Why is this possible? For instance, if the proof of the theorem is formalizable in suitable systems based on finite-type Peano arithmetic, then a (computable) rate of metastability is ensured {\em a priori} by a logical metatheorem and (in this situation) the associated computable functionals are rather simple conceptually (they come from the so-called primitive recursive functionals in the sense of G\"odel). In actual practice, a proof mining analysis can ultimately be done without knowing in detail its underlying theoretical basis and the functionals that appear are typically obtained using very familiar mathematical constructions, like iterations.

Compactness results are the bread and butter of the analyst and they are used in many equivalent ways in mathematics without a second thought. However, in proof mining studies, there are certain distinctions that must be made because they are of crucial importance for a computational analysis. Take, for instance, the compactness of the closed unit interval. From the proof mining point of view, it makes a substantial difference {\em how} the compactness argument is used. One can use the fact that a sequence of real numbers of the closed unit interval has a convergent subsequence. This is a sequential compactness argument. One can also use the fact that every open covering of the closed unit interval has a finite subcovering. This is the Heine/Borel covering principle. Even though these two formulations of compactness are equivalent, they are -- from the proof mining point of view -- quite dissimilar. Heine/Borel compactness can be dealt within the simple framework of the primitive recursive functionals, whereas sequential compactness needs more sophisticated functionals. More importantly, within certain formal systems, a deeper analysis shows that Heine/Borel compactness arguments can be removed from arguments whose conclusion is a quantitative statement. This is emphatically not the case with sequential compactness arguments.

In Hilbert spaces, there are two important topologies at play: the strong and the weak topologies. Both coincide in having the same closed convex subsets. However, the topologies do come apart in infinite dimensional Hilbert spaces (they are the same in finite dimension). What does it mean for a sequence $(u_n)_{n\in \N}$ of elements of a Hilbert space to converge (to an element $u$) in any one of these topologies? It converges strongly if $\lim_n \Vert u_n - u \Vert = 0$. It converges weakly if, for every element $v$ of the space, the sequence of real numbers $(\langle u_n,v\rangle)_{n\in \N}$ converges to the real number $\langle u, v\rangle$. (Here, $\langle \cdot, \cdot \rangle$ is the inner product of the Hilbert space.) It is not in general the case that a sequence of elements of a bounded closed convex subset of a Hilbert space has a subsequence which converges strongly to a point of the subset. For instance, the closed unit ball of an infinite dimensional Hilbert space is never compact for the strong topology. It is, however, a classical result of Hilbert space theory that bounded closed convex subsets of a Hilbert space are sequentially compact for the weak topology. This means that every sequence of elements of a bounded closed convex subset of a Hilbert space has a subsequence which converges weakly to a point of that subset.

In this paper, we analize a famous strong convergence theorem of Felix Browder. Browder's original proof in \cite{Browder(67)} uses a sequential weak compactness argument. As commented before, this poses a problem for a perspicuous proof mining analysis. The fundamental observation of this paper is twofold. Browder's sequential weak compactness argument can be replaced by a Heine/Borel compactness argument for the strong topology. More specifically, one can prove Browder's theorem in a certain formal theory using the Heine/Borel covering principle $$\forall x \in C \, \exists n \in \N\,  (x \in \Omega_n) \to \exists n \in \N \, \forall x \in C \, \exists k \leq n \, (x \in \Omega_k),$$ where  $C$ is a bounded closed convex subset of a Hilbert space, and $(\Omega_n)_{n\in \N}$ is a sequence of open sets for the strong topology. Even though the above Heine/Borel covering principle is false in (infinite dimensional) Hilbert spaces, its use within certain formal theories can be {\em removed} from proofs of quantitative statements. This is the other aspect of the observation. The fact that this removal is {\em a priori} possible is explained by the so-called bounded functional interpretation, introduced in \cite{FerreiraOliva(05)} by the first author of this paper and Paulo Oliva. More to the point, one mines a {\em modified proof} of Browder's theorem, knowing that the above Heine/Borel covering principle trivializes under the bounded functional interpretation.

The paper is organized as follows. In the next section, we look into Browder's proof of his strong convergence result. We briefly recall this proof and show how the sequential weak compactness argument of the proof can be replaced by a Heine/Borel argument. (If the reader feels like it, he can jump now to the beginning of Section \ref{modified} to see this modified proof.) Next, we introduce a formal theory in which the modified proof of Browder's theorem can be formalized. We tried to be brief and rely on the available literature but, of course, this part is unavoidably technical. We state a metatheorem for this theory which guarantees the {\em a priori} existence of computational bounds and justifies the trivial use of the above Heine/Borel covering principle.  Later on, we exemplify this technique with the concrete minings (obtainment of rates of metastability) of three theorems in nonlinear analysis:  the already mentioned strong convergence theorem of Browder \cite{Browder(67)}, the strong convergence of Wittmann \cite{Wittmann(92)} for nonexpansive iterations and Bauschke's generalization \cite{Bauschke(96)} of Wittmann's result  to families of nonexpansive mappings. 
Browder's and Wittmann's theorems have already been mined by Ulrich Kohlenbach in \cite{Kohlenbach(11)}, where he analyzes  two proofs of Browder's theorem:  Browder's original proof, and an alternative proof of Benjamin Halpern \cite{Halpern(67)} which does not rely on a weak compactness argument.
Generalizations of Browder's and Wittmann's theorems have also  been mined by the second author and Kohlenbach for CAT(0) spaces \cite{KohlenbachLeustean(12), KohlenbachLeustean(14)}, by the second author  and Adriana Nicolae for CAT$(\kappa)$ spaces (with $\kappa >0$) \cite{LeusteanNicolae(16)} and, recently,  by Kohlenbach and Andrei Sipo\c s for uniformly smooth and uniformly convex Banach spaces \cite{KohlenbachSipos(19)} (completing a partial analysis of Wittmann's theorem from \cite{KohlenbachLeustean(12A)}).  A  metastable version of a generalization of Bauschke's theorem has been obtained, using also proof mining methods and extending \cite{Kohlenbach(11)}, by Daniel K\"ornlein \cite{Koernlein(16)} (see also K\"ornlein's PhD thesis \cite{Koernlein(16a)}).\\
The mining of the paradigmatic projection argument is done in Section~\ref{projection} using the bounded functional interpretation, and this is a novelty. In Section~\ref{generalprinciple}, we prove a result that isolates the Heine/Borel technique for the minings of this paper. We also state a general principle and obtain a general quantitative result that can be used in several situations. In Section~\ref{browder-wittmann}, we finish the first two minings, obtaining in this way quantitative results which are numerically similar to those of Kohlenbach \cite{Kohlenbach(11)}. Section \ref{section_6} is dedicated to the mining of Bauschke's strong convergence result \cite{Bauschke(96)}. We also use the bounded functional interpretation for doing it. Firstly, the general principle is widened to cover the new situation.  Afterwards, some necessary estimates are worked out in detail for this new case.  These results are then finally put together in order to conclude the mining.

\subsection{Further (and more specialized) introductory remarks}

The theoretical basis underlying the current practice of proof mining  rests on a modification of G\"odel's  functional {\em dialectica} interpretation \cite{Goedel(58)} -- the so-called monotone functional interpretation of Kohlenbach, introduced in \cite{Kohlenbach(96a)} -- and focuses on the extraction  of computational bounds,  as opposed to precise witnesses. Working with the monotone functional interpretation makes possible some of the most  distinctive features of proof mining, viz. its uniformity results or the simple analysis of certain forms of compactness. A fundamental milestone of proof mining was the introduction of abstract types, first in \cite{Kohlenbach(05)} (for the metric bounded case)  and then in \cite{GerhardyKohlenbach(08)}  (for the unbounded case). With this extension, a new base type $X$ is added to the base type of  the natural numbers. The new base type stands for an abstract metric space, but one can choose to be  more specific and consider normed, hyperbolic, Hilbert, CAT(0) spaces, etc. Special conceived variants  are also very useful, as we will illustrate.

The introduction of new base (abstract) types has several advantages. For instance, we are no longer  restricted to state theorems for ``computable" or ``representable" spaces only (as it is usually done in constructivism).  In many cases, the extraction of computable bounds makes sense for arbitrary spaces. Extractions are obtained  for genuinely non-denumerable data (e.g, nonseparable Hilbert spaces), that could not -- even in principle --  be ``computationally representable." With the new base types, we do not have to work with tedious representations of data. We deal with the objects directly, as points of an abstract space (as does the ordinary mathematician). Another important advantage is that, by abstracting from the representations of the mathematical objects, the logical form of the analyzed statements simplifies. As a consequence, we can formulate better quantitative versions of these statements (with a view to working out concrete bounds). These two advantages do not operate in isolation. The abstraction from concrete representations and the attending simplification of logical form work in tandem and, in fact, are inextricably intertwined.

That notwithstanding, the main advantage of the introduction of new base abstract types lies elsewhere. It lies in the abstract axiomatization. The axioms have the crucial instrumental role of delimiting precisely what is used in a proof. This is important because the use of certain principles may obstruct a computational analysis. In order to be able to extract computational information, certain hypotheses cannot be used even though, in the end, the extraction also applies to structures verifying those hypotheses. The most dramatic example of this phenomenon is that full extensionality cannot be used in proof mining. This is related to the well-known fact that full extensionality does not have a {\em dialectica} interpretation in G\"odel's ${\sf T}$ (G\"odel's ${\sf T}$ is a version of finite-type arithmetic). The reader can find in pp. 394--395 of \cite{Kohlenbach(08)} a practical discussion of these matters. Of course, all ordinary mathematical structures satisfy full extensionality and the minings apply to them. A less dramatic example, but an important one for applications, is that the use of the separability hypothesis in Hilbert spaces (the statement that the space has a countable dense subset) in some computational analyses unduly restricts the range of applications to finite-dimensional spaces (see p. 443 of \cite{Kohlenbach(08)}). On the other hand, an analysis that does not use separability applies to all Hilbert spaces, separable and nonseparable alike.

The formulation of good quantitative versions of mathematical statements can be difficult to uncover by the ordinary mathematician because it depends on the {\em logical form} of the statements. This particular point has often been emphasized by Kohlenbach, most recently in \cite{Kohlenbach(ta2)}. Ordinary mathematicians are very good at understanding what a mathematical claim {\em claims}, e.g., on what assumptions a claim rests, but are for the most part blind to the logical forms of the statements themselves. Within the context of a different discussion, Georg Kreisel has commented on this state of affairs in \cite{Kreisel(67)} when he wrote that ``in his own work [the ordinary mathematician] never gives a second thought to the form of the predicate in a comprehension axiom! (This is the reason why, e.g., Bourbaki is extremely careful to isolate the assumptions of a mathematical theorem, but never (\ldots) what instances of the comprehension axioms are used. (\ldots))" Logical guidance plays an important role in arriving at good formulations of quantitative versions of analytical statements. In proof mining, the logical guidance is given by the functional interpretation. The monotone functional interpretation and the bounded functional interpretation differ in their formulations. The difference between the two formulations does not show up in the analysis of simple statements. For instance, they coincide in the quantitative formulation of Cauchyness (both give the metastability formulation discussed before). However, significant differences do appear in more involved statements. In this paper, these differences can be seen clearly in the formulation of the quantitative version of the projection argument (see the beginning of Section \ref{projection}).

A concrete mining of a mathematical result is just a piece of ordinary mathematics. No logic is needed in laying, or explaining, the arguments and computations of a piece of mining. However,  the arguments and computations are not {\em ad hoc}. They follow a method. The method is based  on certain metatheorems that have the following general form: If such-and-such theorem of  ordinary mathematics is provable in a such-and-such axiomatic (formal) system, then  such-and-such type of computational bounds can be extracted from the (formal) proof.  The actual mining exhibits concrete computational bounds. The metatheorems ensure {\em a priori} that if an ordinary proof of mathematics can be formalized in a certain axiomatic system, then a certain computational bound exists. In practice, the proof mining researcher does not  formalize the ordinary mathematician's proof. Rather, he convinces himself that the proof can  be formalized within a certain axiomatic system and proceeds swiftly with the extraction of the  bounds. The method of extraction follows the proof of the metatheorems in the following sense: one associates to each step of the proof its quantitative formulation (given by the functional interpretation) and, at the same time, one finds concrete computational bounds realizing the formulation. This is always possible because logical transitions preserve this association. Given that  the ordinary mathematician's proof is not formal, this is done rather loosely in practice but,  in the end, one does obtain a concrete bound and a rigorous mathematical proof that the bound does the job that it is supposed to do. (See \cite{Kohlenbach(ta1)} for further discussions on related proof-theoretic issues and, especially, for examples.)

As a rule, ordinary mathematicians do not care much about which principles they use in a proof. However, as we have already observed, what is used in a proof is of paramount importance for the enterprise of proof mining, not only as a question of principle (to ensure the very existence of uniform bounds)  but also for knowing what kind of mining analyses and bounds can be expected. The metatheorems of \cite{Kohlenbach(05)} and  \cite{GerhardyKohlenbach(08)} apply to very strong systems (using the so-called bar recursive functionals  of Clifford Spector \cite{Spector(62)}). However, bar recursive functionals seldom appear in a proof mining analysis -- only the much more simple primitive recursive functionals  of G\"odel's {\sf T} do appear. This is because certain forms of comprehension and some principles of choice are not used essentially in the proofs that have so far been mined (it is an interesting question to ask whether this phenomenon is due to a selective choosing in actual  research, or if it is mostly like that in ordinary mathematics). For instance, mathematicians  take infima of positive real sequences without a second thought. From the logical point of view, this is equivalent to a certain form of comprehension (technically, arithmetical comprehension). Even though this form of comprehension falls within the scope of the general metatheorems in \cite{Kohlenbach(05)} and \cite{GerhardyKohlenbach(08)}, its analysis uses bar recursive functionals. What often happens is that taking  infima is not essential to the proofs of computational relevant facts (one only needs to be as close to the infimum as one wants). The proof mining researcher analyzes instead a modified proof, one that avoids some of the spurious principles that are used by the ordinary mathematician.

A good example of this situation is the proof of Browder's strong convergence result of \cite{Browder(67)}. We will look carefully at the ordinary proof of this theorem, its modification and the corresponding mining. This will be done in the next section, in Section~\ref{projection} and in Section \ref{browder-wittmann}. The argument relies on the existence of certain fixed points whose existence could be problematic but, in fact, is not. Subsequently, it uses a projection argument where one must take an infimum and apply a certain strong (from the logical point of view)  form of choice. These procedures turn out not to be essential to the proof. However, the second part of Browder's proof raises a new problem because it uses a sequential weak compactness argument. There seemed to be no way of avoiding this argument in Browder's proof. In spite of the presence of  this argument, a successful mining was obtained in \cite{Kohlenbach(11)}. According to the section of acknowledgements of that paper, Eyvind Briseid pointed out that the use of sequential weak compactness in that mining has a trivial solution. There is no need to use any real strength of sequential weak compactness and, therefore, no need to rely on a complicated [{\em sic}] bar recursive solution. Is there a theoretical explanation for this situation? As discussed in the first part of this introduction, the bounded functional interpretation provides a theoretical explanation for this situation. One can see the mining of Browder's proof as applying to a modified proof which uses Heine/Borel compactness instead of a sequential weak compactness argument. A form of Heine/Borel covering principle trivializes under minings operated by the bounded functional interpretation and, additionally, it is removed from proofs of quantitative statements.

Before concluding this introduction, let us make a logical comment. The fact that  the bounded functional interpretation trivializes a form of the Heine/Borel covering principle in Hilbert  spaces has the consequence that this principle is conservative over a suitable base theory  with respect to a certain class of formulas (which includes the $\Pi^0_2$-formulas). The conservation result of the metatheorem of Subsection \ref{first_metatheorem} (or, even better, of the metatheorem of Sub-subsection \ref{second_metatheorem}) is, in fact, similar to the the well-known result of reverse mathematics that the theory $\WKL$ is $\Pi^0_2$-conservative over the base theory $\RCA$ (for reverse mathematics, see \cite{Simpson(99)}). The curious difference, as already observed, is that whereas weak K\"onig's lemma is true, the Heine/Borel covering principle is false (in infinite-dimensional Hilbert spaces). That notwithstanding, according to the bounded functional interpretation, the explanatory root of these two conservation results is the same.

\mbox{}

{\bf Notation:} $\N^*$ denotes the set of positive natural numbers. For any set $X$ and any mapping $T:X\to X$, we denote by  $Fix(T)$ the set of fixed points of $T$, that is, $Fix(T)=\{x\in X\mid T(x)=x\}$.

\section{A modified proof of a theorem of Browder}\label{modified}

Let us recall the following well-known strong convergence result due to Felix Browder.

\begin{theorem}[Browder]\label{browder} Let $X$ be a real Hilbert space and $U:X \to X$ a nonexpansive mapping. Assume that $C$ is a bounded closed convex subset of $X$, that $v_0 \in C$, and that $U$ maps $C$ into itself. For each natural number $n$, let 
\beq\label{Un-Browder}
U_n(x):=\left(1-\frac{1}{n+1}\right)U(x)+\frac{1}{n+1}\,v_0
\eeq
and consider $u_n$ to be the unique fixed point of this strict contraction. Then the sequence $(u_n)_{n\in \N}$ converges strongly to a fixed point of $U$ in $C$ {\em (}the closest one to $v_0${\em )}.
\end{theorem}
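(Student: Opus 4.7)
The plan is to follow the standard Hilbert space skeleton of Browder's original argument, but to replace the one classically problematic step --- the sequential weak compactness extraction --- by a Heine/Borel covering in the strong topology, exactly as the paper advocates. First I would observe that since $C$ is convex, $U$ maps $C$ into $C$, and $v_0\in C$, each $U_n$ sends $C$ into itself and is a strict contraction with Lipschitz constant $1-1/(n+1)$; Banach's fixed point theorem then yields the unique $u_n\in C$ with $U_n(u_n)=u_n$. Rewriting this identity as $u_n-U(u_n)=\frac{1}{n+1}(v_0-U(u_n))$ and bounding the right-hand side by $\mathrm{diam}(C)/(n+1)$ gives asymptotic regularity $\|u_n-U(u_n)\|\to 0$ with an explicit quantitative rate.

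Next I would identify the intended limit. The Browder-G\"ohde-Kirk theorem shows that $Fix(U)\cap C$ is nonempty, closed and convex, so I can let $p\in Fix(U)\cap C$ be the unique point of minimum distance to $v_0$; the projection characterization then gives $\langle v_0-p,\,q-p\rangle\le 0$ for all $q\in Fix(U)\cap C$. Expanding $u_n-p=(1-\tfrac{1}{n+1})(U(u_n)-p)+\tfrac{1}{n+1}(v_0-p)$, pairing with $u_n-p$, and using $\langle U(u_n)-p,u_n-p\rangle\le\|u_n-p\|^2$ (nonexpansiveness of $U$ plus $U(p)=p$ and Cauchy-Schwarz), I obtain the inequality
\[
\|u_n-p\|^2\le\langle v_0-p,\,u_n-p\rangle,
\]
which reduces the theorem to showing $\langle v_0-p,u_n-p\rangle\to 0$.

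This last step is where the proposed modification intervenes. For each fixed $\varepsilon>0$ and $n\in\N^*$ I would define the strongly open set
\[
\Omega_n=\{x\in C:\|x-U(x)\|>1/n\}\cup\{x\in C:\langle v_0-p,x-p\rangle<\varepsilon\},
\]
and observe that the family $(\Omega_n)$ is increasing and covers $C$: if $x\in Fix(U)$ then the projection inequality places $x$ in the second piece, while if $x\notin Fix(U)$ then $\|x-U(x)\|>0$ and $x\in\Omega_n$ for $n$ sufficiently large. Applying the Heine/Borel covering principle for the strong topology to this cover yields an $N$ with $C\subseteq\Omega_N$, i.e.\ every $x\in C$ with $\|x-U(x)\|\le 1/N$ satisfies $\langle v_0-p,x-p\rangle<\varepsilon$. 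Combining this uniform implication with the asymptotic regularity of the first step gives $\langle v_0-p,u_n-p\rangle<\varepsilon$ for all large $n$, and together with the inequality of the previous paragraph this forces $u_n\to p$ strongly.

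The main obstacle is of course that the Heine/Borel covering principle for the strong topology is \emph{false} on a bounded closed convex subset of an infinite-dimensional Hilbert space, so the step above is not classically legitimate. The whole point of the paper --- established in the later sections --- is that this modified argument can still be formalized in a theory whose bounded functional interpretation trivializes precisely this form of Heine/Borel, so that a rate of metastability for $(u_n)$ can nevertheless be extracted from it. Verifying this justification, and converting the soft covering argument into an explicit quantitative estimate in terms of the data $(v_0,U,C)$, is what I would expect the bulk of Sections~\ref{projection} and~\ref{browder-wittmann} to accomplish.
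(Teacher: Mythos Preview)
Your proposal is essentially correct and follows the same route as the paper: both replace Browder's sequential weak compactness step by the Heine/Borel covering with sets of the form $\{y:\|U(y)-y\|>1/m\}\cup\{y:\langle p-v_0,p-y\rangle<\varepsilon\}$, reduce via the key inequality $\|u_n-p\|^2\le\langle p-v_0,p-u_n\rangle$ (the paper's fact (II)), and feed in asymptotic regularity $\|u_n-U(u_n)\|\le b/(n+1)$.

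The one point of divergence worth flagging is your use of the \emph{exact} projection point $p$ and the exact variational inequality $\langle v_0-p,q-p\rangle\le 0$ for $q\in Fix(U)$. The paper deliberately avoids this: in the formal theory $\Br^+$ one cannot form the infimum defining $p$, so instead it works with an \emph{approximate} projection $\tilde{x}$ supplied by statement (\ref{near_closest}) and the derived fact (I), namely a fixed point $\tilde{x}$ with $\langle\tilde{x}-v_0,\tilde{x}-y\rangle<\frac{1}{k+1}$ for all fixed points $y$. For the informal modified proof your shortcut is harmless, but for the mining in Sections~\ref{projection}--\ref{browder-wittmann} this is precisely the place where additional quantitative work (Propositions~\ref{proj1}--\ref{proj3}) is required; your version would not yield bounds without redoing that analysis.
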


This theorem appeared in \cite{Browder(67)} (a map $U$ is said to be nonexpansive if  $\Vert U(x) - U(y)\Vert \leq \Vert x - y\Vert$,  for all $x,y \in X$). The purpose of this section is to adapt Browder's proof so that it can be  formalized in a theory $\Br^+$ for which a proof mining metatheorem applies. We present this theory  in Subsections~\ref{formal_subsection} and \ref{metatheorem_subsection}.  The highlight -- as described in the introduction -- is that $\Br^+$ postulates a form of Heine/Borel compactness for the strong topology of the Hilbert space $X$. The metatheorem is stated at  the end of Subsection~\ref{metatheorem_subsection}. We show in Subsection~\ref{adapted} that a modification of Browder's proof is formalizable in the theory $\Br^+$. Meanwhile, in the following, we review in broad lines Browder's argument and show how the sequential weak compactness argument of Browder can be replaced by an application of Heine/Borel compactness.

Browder starts his proof of the above theorem by showing that the set $F:=Fix(U)$ of fixed points of $U$ is nonempty, convex and closed. The arguments for the closedness and convexity of $F$ are simple. However, in order to argue that there is a fixed point, Browder refers to papers that rely on Zorn's lemma. Next, Browder's argument invokes Hilbert's projection theorem to justify the existence of a (unique) point of $F$ closest to $v_0$. The projection theorem can be proved in the following way. One considers $\lambda := \inf_{x\in F} \Vert x - v_0\Vert$. By definition, $$\forall k \exists x\in F  \left(\Vert x-v_0 \Vert \leq \lambda + \frac{1}{k+1}\right).$$ We can frame the above claim differently, and sidestep the existence of the infimum: 

\begin{equation}\label{near_closest}
\forall k \exists x\in C \left(U(x) = x \wedge \forall y\in C \left(U(y) = y \to \Vert x -v_0 \Vert 
< \Vert y - v_0 \Vert + \frac{1}{k+1}\right)\right).
\end{equation}

The projection argument proceeds by taking a sequence $(x_k)_{k\in \N}$ of fixed points of $U$ in $C$ such that, for all $k\in \N$, 
$$\forall y\in C\left(U(y) = y \to \Vert x_k - v_0 \Vert \leq \Vert y - v_0 \Vert + \frac{1}{k+1}\right).$$ One can show that $(x_k)$ is a Cauchy sequence and, hence, that it converges (to the point of $F$ closest to $v_0$). However, in order to obtain the sequence $(x_k)$, one needs a strong principle of choice (see the second part of the introduction). 

In his mining of Browder's theorem, Kohlenbach made the {\em crucial} observation that (\ref{near_closest}) is already enough to carry on with Browder's argument. Therefore, the mentioned application of choice is not needed. In order to continue Browder's argument, we need two technical facts:

\begin{enumerate}
\item[(I)] $\forall k \exists x\in C \, \left(U(x) = x \wedge \forall y\in C \, 
\left(U(y) = y \to \langle x-v_0, x-y \rangle < \frac{1}{k+1}\right)\right)$;
\item[(II)] $\forall n \forall x\in C \, \left(U(x) = x \to \Vert u_n - x \Vert^2 \leq 
\langle x - v_0, x-u_n \rangle \right)$.
\end{enumerate}

Both facts are essentially argued in Browder's paper. Fact (I) uses heavily the convexity of $C$ together with the projection result. Fact (II) is the combinatorial core of Browder's argument: its proof relies on very simple computations.

We can now prove that the sequence $(u_n)$ is a Cauchy sequence. To prove this, let $k  \in \N$ be given. By (I), take $\tilde{x} \in C$ such that $U(\tilde{x}) = \tilde{x}$ and 

\begin{equation}\label{adapted_a}
\forall y\in C \left(U(y) = y \to \langle \tilde{x} - v_0,\tilde{x} - y \rangle < \frac{1}{k+1}\right).
\end{equation}

By (II), it is enough to show that

\begin{equation}\label{enough}
\exists n \,\forall i\geq n \,\left(\langle \tilde{x}-v_0,\tilde{x}-u_i\rangle < \frac{1}{k+1}\right).
\end{equation}

Assume not. Then

\begin{equation}\label{assumption} 
\forall n \,\exists i\geq n \left(\langle \tilde{x}-v_0, \tilde{x}-u_i \rangle \geq \frac{1}{k+1}\right).
\end{equation}

Take $(v_n)$ a subsequence of $(u_n)$ such that 

\begin{equation*}
\forall n \left(\langle \tilde{x}-v_0, \tilde{x}-v_n\rangle \geq \frac{1}{k+1}\right)
\end{equation*}

At this point, we invoke a sequential weak compactness argument. Take $(w_n)$ a subsequence of $(v_n)$ weakly converging to a certain point $y\in C$. It is  easy to see 

\begin{equation}\label{quasi_fixed}
\forall n \left( \Vert U(u_n) - u_n \Vert \leq \frac{b}{n+1} \right),
\end{equation}

where $b$ is an upper bound of the diameter of $C$. Using this fact, it can be shown that $U(y) = y$.  By weak convergence, we get $\langle \tilde{x}-v_0,\tilde{x} -y\rangle \geq \frac{1}{k+1}$. This contradicts (\ref{adapted_a}).

At this juncture, we show how to replace the above sequential weak compactness argument by a Heine/Borel argument. We need to argue (\ref{enough}). By (\ref{adapted_a}), 
$$\forall y\in C\left( \forall m\in \N \left( \Vert U(y) - y \Vert \leq \frac{1}{m+1} \right) \to \langle \tilde{x}-v_0, \tilde{x}-y \rangle < \frac{1}{k+1} \right).$$ Hence, $C \subseteq \bigcup_m \Omega_m$, where $$\Omega_m := \left\{y\in X: \Vert U(y) - y \Vert > \frac{1}{m+1}\right\} \cup 
\left\{y \in X: \langle \tilde{x}-v_0, \tilde{x} - y \rangle < \frac{1}{k+1} \right\}.$$ By Heine/Borel compactness, there is $\ell \in \N$ such that $C \subseteq \Omega_\ell$ (note that the sequence of the $\Omega$s is increasing). Therefore $$\forall y\in C \left( \Vert U(y) - y \Vert \leq \frac{1}{\ell +1} \to \langle \tilde{x}-v_0, \tilde{x}-y \rangle < \frac{1}{k+1} \right).$$ 

Using (\ref{quasi_fixed}), it is clear that (\ref{enough}) follows.

\subsection{The formal theory}\label{formal_subsection}

In Section \ref{metatheorem_subsection}, we state a logical metatheorem of the kind discussed in the introduction. In order to do this in a rigorous manner, we need to describe an appropriate formal language, as well as to formulate appropriate theories. The present section is one of logical and technical flavor, its objective being the description of a formal theory $\Br$ adequate to formulate Browder's theorem.

The formal language of $\Br$ is the language of finite types with two base types: the base type 0 of 
the natural numbers and the (abstract) base type $X$. With some minor differences (discussed below), 
we follow the framework of chapter 17 of \cite{Kohlenbach(08)} (see also \cite{Kohlenbach(05)} and 
\cite{GerhardyKohlenbach(08)}). The language has only one relation symbol, namely the equality 
symbol $=_0$ of type 0. It includes the usual constants associated with logic (combinators, 
extended to the new types) and with arithmetic (zero, sucessor and the primitive recursive 
functionals in the sense of G\"odel, extended to the new types). For the base type $X$, 
there are some constants for inner product spaces and some {\em ad hoc} constants specially 
introduced to analyze Browder's strong convergence result. We find 
that the introduction of these {\em ad hoc} constants (and associated axioms) is very convenient 
because their presence greatly simplifies matters.

The proof of the metatheorem of Subsection~\ref{metatheorem_subsection} relies on the 
so-called bounded functional interpretation adapted to the new situation, with a base abstract 
type. So far, there is only one place in the literature where the bounded functional interpretation, 
extended to an abstract type, has been treated, viz. in the doctoral dissertation of Patr\'{\i}cia 
Engr\'acia \cite{Engracia(09)}. As it is characteristic of bounded functional interpretations, 
an intensional (i.e., rule-governed) majorizability relation plays a crucial role (as well as the notion of intensional bounded formula). In this paper, 
we have managed to avoid speaking of this intensional relation. The cost of the simplification 
is that the statement of the metatheorem in Subsection~\ref{metatheorem_subsection} is unduly 
restricted and not formulated in its proper natural setting. However, the restricted metatheorem 
is enough for our present purposes. Another cost is that we will not be able to prove the metatheorem 
in this paper. The proof requires the introduction of the full apparatus (or something close enough) 
and that would make the paper quite long. A proof can be obtained by adapting the proof of 
Theorem 35 of \cite{Engracia(09)}. Unfortunately, intensionality issues cannot be avoided 
altogether. A few of these issues do necessarily arise. We deal with these issues in a case-by-case 
basis (instead of uniformly, as it is done by the bounded functional interpretation). 

In order to be brief and follow familiar usage, we choose to rely on the available established 
literature as much as possible. Accordingly, we lean on Section 17.3 of \cite{Kohlenbach(08)} and 
treat inner product spaces as the special case of normed spaces in which the parallelogram law holds. 
There are vector space constants $0_X$, $+_X$, $-_X$ and $\cdot_X$ of types (respectively) 
$X$, $X \to (X \to X)$, $X \to X$ and $1 \to (X \to X)$.  They stand (respectively) for the zero 
vector and the vector operations of sum, symmetric of a vector and scalar multiplication. The 
functional for scalar multiplication should be interpreted in the following way: given a 
function $\gamma: \N \to \N$ and $x \in X$, $\gamma \cdot_X x$ is the scalar multiplication 
of the real $\gamma_\R$ with the vector $x$. Here, $\gamma_\R$ is the representation of a 
real associated -- in a primitive recursive way (in the sense of Kleene) -- to the number theoretic 
function $\gamma$. This can be done adequately in several ways. Kohlenbach uses in \cite{Kohlenbach(08)} 
a representation based on Cauchy sequences. Engr\'acia uses the signed digit representation in \cite{Engracia(09)}. 
We opt for the latter representation. Be that as it may, it is important to 
note that the (defined) relations of $=_\R$ and $\leq_\R$ between (representations of) real 
numbers are given by $\Pi^0_1$-formulas, and strict equality $<_\R$ is given by a $\Sigma^0_1$-formula. 
Furthermore, there is a constant $\Vert \!\cdot\! \Vert$ of type $X \to 1$ that stands for the norm. 
We assume that $\Vert x \Vert$ is always a type 1 functional of the form $\gamma_\R$.

There are also some {\em ad hoc} constants related to the statement of Browder's strong convergence result. 
We include a constant $v_0$ of type $X$ for the given point in Browder's theorem, a constant $C$ of 
type $X \to 0$ for the characteristic function of the bounded closed convex subset of $X$ and a 
constant $b$ of type 0 for a positive bound on the diameter of $C$. Finally, we also have a 
constant $u$ of type $0 \to X$ whose intended meaning is that $u(n)$ gives the unique fixed point of 
the contraction $U_n$. We write $x \in C$ instead of $C(x)=_0 0$ and $u_n$ instead of $u(n)$.

Let us introduce the theory $\Br$, framed in the above described language. Equality is treated  as in chapters 3 and 17 of \cite{Kohlenbach(08)}, with Spector's weak extensionality rule. The axioms related to the combinators and the arithmetical constants are as usual  (the scheme of induction is unrestricted).  Note that equality between elements of $X$,  written $x=_X y$, is a defined notion and stands for $\Vert x-y \Vert =_\R 0$ (equality  between elements of $X$ is a $\Pi^0_1$-notion). We do not have axioms stating that $=_X$ is  a congruence for the relevant notions, nor even that it is an equivalence relation (the  direct postulation of transitivity or congruence is not given by universal statements).  However, with a careful universal axiomatization of (real) normed vector spaces, it can be  proved that $=_X$ is indeed an equivalence relation and that it is congruent with respect to  the normed vector space notions (see Section 17.3 of \cite{Kohlenbach(08)} for details). It is  also congruent with respect to $U$, but {\em not} with respect to $C$. In fact,  $$\forall x^X,y^X (x=_X y \wedge x\in C \to y\in C),$$ is {\em not} provable in our theory (see Subsection~\ref{nonextensionality}).

To simplify matters and notation, we have a special axiom for the norm: 
$\forall x^X \forall n^0 (\Vert x \Vert (n) =_0 \Vert x \Vert_\R(n))$. This axiom says that 
the type 1 functional $\Vert x \Vert$ is always a representation of a real number. The inner product 
functional $\langle \,\, ,\,\rangle$ of type $X \to (X \to 1)$ is defined by 
$$\langle x, y\rangle := \frac{1}{4}\, (\Vert x + y\Vert^2 - \Vert x - y \Vert^2)$$ 
(we are using some liberty in the notation above -- omitting subscripts -- and will take such 
liberties whenever we find convenient). The above mentioned careful axiomatization of (real) 
normed vector spaces together with the axiom giving the parallelogram law 
$$\forall x^X,y^X (\Vert x + y\Vert^2 + \Vert x - y \Vert^2 = 
2 (\Vert x\Vert^2 + \Vert y\Vert^2))$$ entails the usual properties of the inner product.

We now describe the {\em ad hoc} axioms of $\Br$. Firstly, the axioms for $v_0$ and $C$:

\begin{enumerate}
\item[] $v_0 \in C$
\item[] $\forall x,y \in C \; (\Vert x-y\Vert \leq b_\R)$
\item[] $\forall x,y \in C \; \forall \gamma^1 \; ((1-\gamma_{[0,1]}) \cdot x + \gamma_{[0,1]} \cdot y  \in C)$
\end{enumerate}

We need to make several comments. The second axiom is an abbreviation of 
$$\forall x^X,y^X (C(x)=_0 0 \wedge C(y) =_0 0 \to \Vert x-y\Vert \leq_\R b_\R),$$ where $b_\R$ 
(of type 1) is a given representation of the real associated with $b^0$ (note the benign duplicity 
of the notation $b_\R$). We classify a quantification of the form $\forall x\in C \, (\ldots)$ 
as a bounded quantification. Dually, quantifications of the form $\exists x \in C\, (\ldots)$ 
are also classified as bounded.

In the above third axiom, $\gamma_{[0,1]}$ is $\max_\R (0_\R, \min (1_\R, \gamma_\R))$, where the notation is self-explanatory (the functionals $\max_\R$ and $\min_\R$ can be appropriately defined). However, it is crucial for our treatment that there exists a fixed functional $l^1$ such that $\forall \gamma^1 \forall n^0(\gamma_{[0,1]} (n) \leq l(n))$. This is the case with the signed digit representation: $l$ can be taken to be $\lambda n.5$ (cf. \cite{Engracia(09)}). We write the third axiom in a more readable way:

\begin{enumerate}
\item[] $\forall x,y \in C \; \forall \gamma \in [0,1] \; ((1-\gamma) \cdot x + \gamma \cdot y  \in C)$
\end{enumerate}

In general, a quantification of the form $\forall  \gamma^1 (\ldots \gamma_{[0,1]} \ldots)$ is 
written $\forall \gamma \in [0,1]\, (\ldots \gamma \ldots)$. Note that, due to lack of extensionality, 
this quantification is not always equivalent to 
$\forall \gamma^1 (0_\R \leq_\R \gamma_\R \leq_\R 1_\R \to (\ldots \gamma_\R \ldots))$. 
The latter quantification implies the former, but not vice-versa. We also classify quantifications 
of the form $\forall \gamma \in [0,1] \, (\ldots)$ or (dually) $\exists \gamma \in [0,1] \, (\ldots)$ 
as bounded quantifications.

There are also {\em ad hoc} axioms of $\Br$ for $U$. They are the following two axioms:

\begin{enumerate}
\item[] $\forall x \in C \; (U(x) \in C)$
\item[] $\forall x,y \in X \; (\Vert U(x) - U(y) \Vert \leq \Vert x - y \Vert)$
\end{enumerate}

Finally, there are two {\em ad hoc} axioms for the constant $u$ of type $0 \to X$: 

\begin{enumerate}
\item[] $\forall n \, (u_n \in C)$
\item[] $\forall n \left( \left(1 - \frac{1}{n+1} \right) U(u_n) + \frac{1}{n+1}\, v_0 = u_n \right)$
\end{enumerate}

The last axiom says that, for each natural number $n$, $u_n$ is the fixed point of $U_n$.

We are done describing the formal theory $\Br$.

\subsection{Brief semantical considerations}\label{semantics}

$\Br$ does not postulate the Cauchy completeness of the space $X$, nor the closedness of $C$. These are not universal properties and, therefore, not suitable for a proof mining metatheorem. This is characteristic of proof mining studies. However, the two (universal) axioms regarding the constant $u^{0 \to X}$ have a tinge of completeness. For each natural number $n$, $U_n$ is a strict contraction (with contraction constant $1-\frac{1}{n+1}$). A (unique) fixed point of $U_n$ is only guaranteed to exist by Cauchy completeness (Banach fixed point theorem). Moreover, the fixed point is only guaranteed to be in $C$ provided that $C$ is closed. At the cost of some complications, it would have been possible to work without the constant $u$ and its corresponding two axioms. We, nevertheless, opted for our present simpler treatment.

Let $X$ be a (real) Hilbert space, $C$ a closed convex subset with diameter bounded by a positive natural number $b$, $v_0 \in C$ and $U: X \to X$ a nonexpansive function that maps $C$ into itself. To each such quintuple $(X, C, b, v_0, U)$, we can associate a model of $\Br$. The base types 0 and  $X$ are interpreted by the natural numbers and by the given Hilbert space $X$, respectively. The remaining finite types are interpreted by the full set-theoretic structure over the base types 0 and $X$. With the exception of the norm, the interpretations of the constants are completely natural (as discussed, each $u_n$ is interpreted as the unique fixed point of the map $U_n$). In the case of the norm, a selection of a (signed digit) representative for each real number $\Vert x \Vert$ has to be made. Any selection will do for our purposes. 

In the metatheorem of the next subsection, a certain conclusion is provable in $\Br$. Therefore, the conclusion is true for the structures associated to the quintuples $(X, C, b, v_0, U)$ described in the previous paragraph.

\subsection{The first metatheorem}\label{metatheorem_subsection}

In this subsection, we add to the theory $\Br$ a principle of choice and two ``Heine/Borel principles." 
They are ``characteristic principles" of the bounded functional interpretation 
(cf. \cite{FerreiraOliva(05)}, \cite{Ferreira(09)} and \cite{Engracia(09)}). These characteristic 
principles trivialize under the bounded functional interpretation (in the same sense that, 
for instance, Markov's principle trivializes under G\"odel's {\em dialectica} interpretation for Heyting arithmetic). 
In order to formulate these principles, we need to introduce some simple notions.

A formula of the language of $\Br$ is called {\em bounded} if it can be obtained from atomic formulas using propositional connectives and bounded quantifications, i.e., quantifications of the form $\forall k\leq_0 n \, (\ldots)$, $\exists k\leq_0 n \, (\ldots)$, $\forall x\in C \, (\ldots)$, $\exists x\in C \, (\ldots)$, $\forall \gamma \in [0,1] \, (\ldots)$ or $\exists \gamma \in [0,1] \, (\ldots)$. A $\Sigma$-formula is a formula of the form $\exists n^0 B(n)$, where $B$ is a bounded formula. $\Pi$-formulas are defined dually. Given a type $1$ functional $f$, $\tilde{f}$ denotes the type $1$ functional given by $$\tilde{f}(n) := \max_{k\leq n} f(k)$$

The {\em bounded choice principle}, dubbed $\bAC$, is 
$$\forall n^0 \exists k^0 A(n,k) \to \exists f^1 \forall n \exists k\leq \tilde{f}(n) \, A(n,k),$$ 
where $A$ is a $\Sigma$-formula (possibly with parameters).

The {\em bounded collection principles}, dubbed $\BC$, are the following principles: $$\forall i\leq_0 m \exists n^0 A(i,n) \, \to \, \exists n \forall i\leq m \exists k\leq n \, A(i,k),$$ $$\forall \gamma \in [0,1] \, \exists n^0 A(\gamma,n) \, \to \, \exists n \forall \gamma \in [0,1] \, \exists k\leq n \, A(\gamma,k)$$ and $$\forall x\in C \exists n^0 A(x,n) \, \to \, \exists n \forall x\in C \exists k\leq n \, A(x,k),$$ where the $A$'s are $\Sigma$-formulas (possibly with parameters). The first principle is, 
of course, provable by induction. We nevertheless include it, in order to display theoretical uniformity. The last two principles are ``Heine/Borel" like. For instance, the third principle says that if $C \subseteq \bigcup_n \{x\in X: A(x,n)\}$, then $C$ is already covered by finitely many of the sets $\{x\in X: A(x,n)\}$. Note that this ``Heine/Borel" principle is restricted to countable coverings.

In the presence of the principles $\BC$, it is clear that the class of $\Sigma$-formulas is 
closed under bounded quantifications. This fact will be used many times in the sequel without mention.

\begin{definition} 
The theory $\Br^+$ is the theory $\Br$ together with the principles $\bAC$ and $\BC$. 
\end{definition}

Our first proof mining metatheorem is the following:

\begin{metatheorem} \label{first_metatheorem}
Suppose that the theory $\Br^+$ proves the sentence $\forall k^0 \forall f^1 \exists n^0 A(k,\tilde{f},n)$, 
where $A$ is a $\Sigma$-formula. Then there is a closed functional 
$\phi$ of type $0\to (1 \to 0)$ 
such that the theory $\Br$ proves 
$$\forall k \forall f \exists n\leq \phi(k,f) \, A(k,\tilde{f},n).$$ 
\end{metatheorem}

This is partly a conservation result. Clearly, if $\Br^+ \vdash \forall k \forall f \exists n A(k,\tilde{f},n)$, then $\Br \vdash \forall k \forall f \exists n A(k,\tilde{f},n)$. If the variable $f$ is absent, we get in particular $\Pi^0_2$-conservation. Note also that, when the $\Pi^0_2$-sentence is just $0=1$, we obtain the relative consistency of the theory $\Br^+$ with respect to the theory $\Br$. This observation shows the nontrivial fact that the theory $\Br^+$ is consistent (note that there is no obvious model of $\Br^+$).

Additionally, the above metatheorem is an extraction result in the sense that it extracts a bound $\phi$ from a certain given (formal) proof. The bound is extracted constructively. This is a consequence of the proof of the metatheorem. Given a formal derivation $\sigma_1$, $\sigma_2$, \dots, $\sigma_n$ in $\Br^+$, one effectively associates with it a sequence of formulas $\sigma_1^\bfi$, $\sigma_2^\bfi$, \dots, $\sigma_n^\bfi$ (given by the bounded functional interpretation) and a sequence of functionals $\phi_1$, $\phi_2$, \ldots, $\phi_n$ such that each $\phi_i$ bounds the ``existential witnesses" of $\sigma_i^\bfi$, provably so in $\Br$. In the minings of the theorems of this paper -- given that their ordinary proofs have a simple {\em logical} complexity  -- it is sufficient to use only the above characteristic principles to effect a $\bfi$-transformation and obtain formulas of the form $\forall k \forall f \exists n A(k,\tilde{f},n)$, with $A$ a $\Sigma$-formula 
(the full apparatus of the bounded functional interpretation is not needed). We call this a {\em quantitative form} of the given formula. One then tries to construct bounds $\phi$ as in the theorem. This {\em modus faciendi} is well illustrated in the analysis of the projection argument in Section~\ref{projection}.

\subsection{The modified proof in the formal theory}\label{adapted}

The main point of this section is to show that Browder's proof can be modified so that the  following holds:

\begin{theorem}\label{adapted_thm} 
The theory $\Br^+$ proves 
$$\forall k \,\exists n \, \forall m \, \forall i,j \in [n,m] \left( \Vert u_i - u_j\Vert <_\R \frac{1}{k+1}\right).$$ 
\end{theorem}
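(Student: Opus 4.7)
The plan is to formalize in $\Br^+$ the modified proof sketched in Section~\ref{modified}. Given a fixed $k$, the target inner quantifier $\forall m \, \forall i,j \in [n,m](\ldots)$ reduces to the Cauchy assertion $\forall i,j \geq n \, (\|u_i - u_j\| < 1/(k+1))$, so the task is to produce such an $n$. The proof proceeds in three stages: (a) establish facts (I) and (II) as theorems of $\Br^+$; (b) from (I), use the Heine/Borel-style collection principle $\BC$ to obtain an $\ell$ such that every $y \in C$ which is ``$\tfrac{1}{\ell+1}$-almost a fixed point'' satisfies the inner-product bound against a chosen $\tilde{x}$; (c) combine this with the quasi-fixed-point estimate $\|U(u_n) - u_n\| \leq b/(n+1)$ and with fact (II) to extract the Cauchy witness.

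Stage (a) is where the formal-systems subtleties concentrate. Fact (II) follows by a routine computation from the defining axiom of $u_n$, the nonexpansivity of $U$, and inner-product identities provable from the parallelogram law. Fact (I) is the delicate projection argument: one must derive (\ref{near_closest}) and then upgrade it to the inner-product form (I) without ever forming an infimum. The sentence (\ref{near_closest}) has a $\Sigma$ matrix (with bounded quantifiers over $C$), so $\bAC$ yields a functional selecting quasi-projection points $x_k$; a convexity-and-parallelogram computation applied to convex combinations of pairs of such witnesses then upgrades (\ref{near_closest}) to (I).

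For stages (b) and (c), fix $\tilde{x} \in C$ witnessing (I). Contrapositively, for every $y \in C$ the $\Sigma$-formula
\[
A(y,m) \;\equiv\; \|U(y) - y\| >_{\R} \tfrac{1}{m+1} \;\vee\; \langle \tilde{x} - v_0, \tilde{x} - y \rangle <_{\R} \tfrac{1}{k+1}
\]
is satisfied by some $m$; thus $\forall y \in C \, \exists m \, A(y,m)$. The third $\BC$ clause supplies a single $\ell$ with $\forall y \in C \, \exists m \leq \ell \, A(y,m)$, and since $A(y, \cdot)$ is monotone in $m$ this sharpens to $\forall y \in C \, A(y, \ell)$. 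Choosing $n$ large enough that $b/(n+1) \leq 1/(\ell+1)$ forces the first disjunct of $A(u_i, \ell)$ to fail for every $i \geq n$, whence $\langle \tilde{x} - v_0, \tilde{x} - u_i \rangle < 1/(k+1)$. Fact (II) then yields $\|u_i - \tilde{x}\|^2 < 1/(k+1)$, and the triangle inequality delivers Cauchyness (with $k$ rescaled by a constant that is absorbed by shifting the input).

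The main obstacle will be stage (a), specifically the derivation of fact (I). One must perform the projection-style reasoning while staying within the $\Sigma$-fragment of $\Br^+$ (so that $\bAC$ is applicable), while carefully respecting the non-extensionality of $C$, and while invoking the convexity axiom only in its admissible syntactic form with $\gamma \in [0,1]$ quantifiers rather than real-valued side conditions. Once (I) is in place, stages (b) and (c) are essentially direct applications of $\BC$ and the quasi-fixed-point estimate, which is precisely the technical dividend of having replaced sequential weak compactness by Heine/Borel compactness in the informal proof.
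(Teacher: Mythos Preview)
Your stages (b) and (c) match the paper's argument almost exactly and are fine. The genuine gap is in stage (a), at two points.

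First, you do not explain how the near-closest statement (\ref{near_closest}) is \emph{derived} in $\Br^+$; you only say ``one must derive (\ref{near_closest})'' and then move on. The paper's derivation is nontrivial: one first obtains $\exists x\in C\,(U(x)=x)$ by applying $\BC$ to the quasi-fixed-point estimate (\ref{quasi_fixed}), and then argues (\ref{near_closest}) by contradiction plus a simple induction (if every fixed point is beaten by $\tfrac{1}{k_0+1}$, iterate to push $\|x-v_0\|$ below zero). Without a fixed point in hand you cannot even begin the inductive descent, and you never mention this step.

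Second, your route from (\ref{near_closest}) to fact (I) is incorrect as stated. You claim that (\ref{near_closest}) has a $\Sigma$ matrix and that $\bAC$ then ``yields a functional selecting quasi-projection points $x_k$''. Neither part holds. The matrix after $\exists x\in C$ contains the conjunct $U(x)=x$, which is genuinely $\Pi$ (equality in $X$ is $\Pi^0_1$), so the matrix is not $\Sigma$ in the paper's sense. And even if it were, $\bAC$ is formulated only for type-$0$ existentials $\exists k^0$: it produces numerical bounds, not a choice of elements of $C$. The paper in fact stresses (just before the statement of (I) and (II)) that producing the sequence $(x_k)$ would require a strong choice principle and that Kohlenbach's crucial observation is precisely that no such sequence is needed. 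The correct passage from (\ref{near_closest}) to (I) uses a \emph{single} witness $x$ (taken for a suitably enlarged $k'$), the convexity of the fixed-point set (so $w_\gamma(x,y)\in F$ for every fixed point $y$ and every $\gamma\in[0,1]$), and the variational estimate of Lemma~\ref{prop.lem} to convert the norm inequality into the inner-product bound. Replace your $\bAC$-plus-sequence argument by this single-witness-plus-convexity argument and stage (a) goes through.
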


The metatheorem of the previous section cannot be applied directly to the conclusion above, because the latter does not have the right logical form. As discussed in the introduction, we  consider the metastable version of the conclusion:

\begin{corollary}\label{prediction} 
There is a closed functional $\phi$ of type $0\to (1\to 0)$ such that the theory $\Br$ proves 
$$\forall k^0 \forall f^1 \exists N\leq \phi(k,f) \, \forall i,j  \in [N, \tilde{f}(N)] \left( \Vert u_i - u_j\Vert <_\R \frac{1}{k+1} \right).$$ 
\end{corollary}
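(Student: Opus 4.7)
The plan is to deduce the corollary directly from Theorem~\ref{adapted_thm} via the First Metatheorem. First I would pass from the Cauchy statement of Theorem~\ref{adapted_thm} to a metastable reformulation that is still provable in $\Br^+$: given $k$ and $f$, take $n$ witnessing the Cauchy statement and specialize $m := \tilde{f}(n)$ (if $\tilde{f}(n) < n$ the inner universal assertion is vacuous, so this is harmless). Thus $\Br^+$ proves
$$\forall k^0\, \forall f^1 \, \exists N^0 \, \forall i,j \in [N, \tilde{f}(N)] \left(\Vert u_i - u_j\Vert <_\R \frac{1}{k+1}\right).$$

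Next I would verify that this statement has the syntactic form $\forall k \forall f \, \exists N \, A(k, \tilde{f}, N)$ with $A$ a $\Sigma$-formula, so that Metatheorem~\ref{first_metatheorem} applies. The matrix $\Vert u_i - u_j\Vert <_\R 1/(k+1)$ unfolds to a $\Sigma^0_1$-formula (strict inequality between representations of reals), and the bounded quantifications $\forall i,j \in [N, \tilde{f}(N)]$ are of type $0$. As remarked just before the definition of $\Br^+$, the class of $\Sigma$-formulas is closed under such bounded quantifications in the presence of $\BC$, so $A$ qualifies. Applying the metatheorem then yields the desired closed functional $\phi$ of type $0 \to (1 \to 0)$ together with the bound $N \leq \phi(k,f)$, which is exactly the statement of the corollary.

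The main obstacle in this plan is not the derivation of the corollary from Theorem~\ref{adapted_thm} itself, which amounts to a standard shape-checking exercise, but rather in the fact that the concrete extraction promised by the metatheorem is deferred: the corollary asserts only the existence of $\phi$, while the actual construction of an explicit $\phi$ requires tracking, step by step, the bounds produced by a $\bfi$-transformation of the modified proof presented in Section~\ref{modified}. In particular, one must analyze quantitatively the bounded collection instance used to replace the sequential weak compactness argument, as well as the projection argument of Section~\ref{projection}; these analyses are the substance of the later sections, and the corollary above should be viewed as the \emph{a priori} logical guarantee that such an analysis can succeed.
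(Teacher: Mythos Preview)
Your proposal is correct and follows essentially the same approach as the paper: derive the metastable reformulation from Theorem~\ref{adapted_thm} by specializing $m$ to $\tilde{f}(n)$, observe that the matrix is (equivalent to) a $\Sigma$-formula, and apply the First Metatheorem. Your second paragraph is accurate but extraneous commentary; the corollary itself only asserts existence of $\phi$, and the paper likewise defers the explicit extraction to later sections.
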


\begin{proof} By the above theorem, the theory $\Br^+$ proves  $\forall k \exists n \forall m \, \forall i,j \in [n,m] \,( \Vert u_i - u_j\Vert <_\R \frac{1}{k+1})$. 
It logically follows that 
$$\forall k \forall f \exists N  \, \forall i,j \in [N,\tilde{f}(N)] \left( \Vert u_i - u_j\Vert <_\R \frac{1}{k+1} \right)$$
Note that the formula after the quantification ``$\exists N$" is equivalent to a $\Sigma$-formula.  Now, just apply the metatheorem of the previous subsection. 
\end{proof}

The corollary predicts the existence of a closed functional $\phi$ as above. An explicit bounding functional $\phi$ is computed in Theorem \ref{app:quant-browder}. This is what a proof mining study amounts to. 

Let us now prove Theorem \ref{adapted_thm}. We follow the review of Browder's argument in the opening of this section. In $\Br^+$ we cannot speak literally of sets, and we also do not need the closedness of $F:=Fix(U)$. The convexity of $F$ can be stated in the formal language as 

\begin{equation}\label{convexity}
\forall x,y \in C\, \forall \gamma \in [0,1] \,\big(U(x) = x \wedge U(y) = y \to U((1-\gamma) x + \gamma y) = (1-\gamma) x + \gamma y \big),
\end{equation}

and its proof can be readily formalized in $\Br$. The claim that $\exists x\in C \, \big(U(x) = x \big)$ is quite another matter. For strict proof mining purposes, there is however a very simple way of dealing with this problem. Just postulate the existence of a fixed point! Formally, this means extending the language with a constant $c$ of type $X$ and accepting the axiom $U(c) = c$. This works because the axiom is universal and true. However, as it happens, it is very easy to prove in $\Br^+$ the existence of a fixed point. This  follows immediately from $\BC$ using (\ref{quasi_fixed}). We can apply (the contrapositive of) $\BC$ in the form $$\forall n \exists x\in C \forall k \leq n \left( \Vert U(x) - x \Vert \leq \frac{1}{k+1} \right) \to \exists x \in C \, \forall n  \left( \Vert U(x) - x \Vert \leq \frac{1}{n+1}\right)$$ in order to conclude the existence of fixed points.

As a matter of fact, the above argument can be seen as an application of Heine/Borel compactness for the strong topology. Suppose that there is no such fixed point. For each $n\in \N$, let $\Gamma_n$ be the open set $\{x\in X: \| U(x) -x \| > \frac{1}{n+1}\}$. By the supposition, $C \subseteq \bigcup_n \Gamma_n$. Therefore, by Heine/Borel compactness, there is $\ell \in \N$ such that $C \subseteq \Gamma_\ell$ (note that the sequences of $\Gamma$s is increasing). Then, obviously, for all $x\in C$, $\| U(x) - x\| > \frac{1}{\ell +1}$. This contradicts (\ref{quasi_fixed}).

Next, Browder's argument invokes Hilbert's projection theorem. As we saw, it is enough to show (\ref{near_closest}). This fact can easily be proved in $\Br^+$ by contradiction. To see this, suppose that there is $k_0$ such that $$\forall x\in C \left( U(x) = x \to \exists y \in C \left(U(y) = y \wedge \Vert x - v_0 \Vert  
\geq \Vert y-v_0 \Vert + \frac{1}{k_0+1}\right)\right).$$

Let $x_0 \in C$ be a fixed point of $U$. A simple inductive argument shows that $$\forall n \exists y\in C \left( U(y) = y \wedge \Vert x_0 -v_0 \Vert \geq \Vert y - v_0 \Vert + \frac{n}{k_0+1} \right).$$ This is a contradiction.

Browder's argument, as we saw in the beginning of this section, needs the two technical facts (I) and (II). These two facts can be proven in $\Br$ without much ado. Finally, as discussed, we can use Heine/Borel compactness in order to prove that the sequence $(u_n)$ is a Cauchy sequence. The crucial passage is from $$\forall y\in C \, \exists  m^0 \left( \Vert U(y) - y \Vert \leq \frac{1}{m+1} \to \langle \tilde{x}-v_0, \tilde{x}-y \rangle < \frac{1}{k+1} \right)$$ to $$\exists \ell \, \forall y\in C \exists  m\leq \ell \, \left( \Vert U(y) - y \Vert \leq \frac{1}{m+1} \to \langle \tilde{x}-v_0, \tilde{x}-y \rangle < \frac{1}{k+1} \right).$$ This follows from an application of the bounded collection principle $\BC$.

\subsection{Observations}

Before starting a new section, we make four observations:

\subsubsection{The uniform boundedness principle}

We have presented an argument that circumvents the application of weak sequential compactness. Our argument uses the characteristic principles of the bounded functional interpretation (specially bounded collection) but the argument can also be made within the framework of the monotone functional interpretation by appealing to the so-called generalized uniform boundedness principle $\exists${\sf -UB}$^X$ (see \cite{Kohlenbach(06)} or Sections 17.7 and 17.8 of \cite{Kohlenbach(08)}). Whereas the bounded functional interpretation {\em trivializes} the use of Heine/Borel compactness, the monotone functional interpretation interprets it with the aid of a postulate which is true in Bezem's structure of the strongly majorizable functionals. With the latter technique, one does {\em not} obtain the provability of the conclusion of Corollary \ref{prediction} in the theory $\Br$ (no conservation result is obtained), but only the set-theoretic truth of the conclusion. Of course, the latter is already sufficient for proof mining purposes. It remains to be seen whether the bounded functional  interpretation, with its bounded collection principles, has applications that cannot be  obtained using $\exists${\sf -UB}$^X$ instead.

\subsubsection{On getting to the truth with false principles}

The proof of Theorem \ref{adapted_thm} gives rise to a new proof of Theorem \ref{browder}. 
In effect, given $X$, $C$, $v_0$ and $U$ as in Browder's theorem, and given $b\in \N$ a positive 
bound for the diameter of $C$, the conclusion of Corollary \ref{prediction} is true in the 
structure $(X, C, b, v_0, U)$. Using countable choice in the real world, that conclusion 
implies the Cauchyness -- and, hence, the convergence -- of the sequence $(u_n)$ in $X$. 
The object lesson is that the use of the false Heine/Borel compactness principle in the context of 
$\Br^+$ is a perfectly good way of proving the convergence of sequences in Hilbert spaces.

\subsubsection{The non-extensionality of the convex set {\it C}}\label{nonextensionality}

In Subsection~\ref{formal_subsection}, we said that $C$ is not extensional in the sense that  
$\forall x^X,y^X (x=_X y \wedge x\in C \to y\in C)$ is not provable in $\Br$. In fact, it is 
not provable in $\Br^+$. For, suppose it is. By instanciating the variable $x$ by $0_X$ and 
the variable $y$ by $-y$, we get $$\Br^+ \vdash \forall y^X (y=_X 0_X \wedge 0_X \in C \to -y \in C).$$ 
In particular $$\Br^+ \vdash \forall y\in C \left( \forall n \left(\Vert y \Vert \leq \frac{1}{n+1}\right) 
\wedge 0_X \in C \to -y \in C\right)$$ and, hence, 
$$\Br^+ \vdash \forall y\in C \, \exists n \left(\Vert y \Vert \leq \frac{1}{n+1} \wedge 0_X \in C 
\to -y \in C\right).$$ 
Applying the bounded collection principle $\BC$, we get 
$$\Br^+ \vdash \exists n \forall y\in C \left(\Vert y \Vert \leq \frac{1}{n+1} 
\wedge 0_X \in C \to -y \in C\right).$$ 
By the first metatheorem, there is a concrete number $n_b$ (which depends only on a bound $b$ for 
the diameter of $C$) such that 
$$\Br \vdash \forall y\in C \left(\Vert y \Vert \leq \frac{1}{n_b+1} \wedge 0_X \in C \to -y \in C\right).$$ 
Therefore, the sentence after the provability sign is true in the structure 
$(\R, [0,1], 1, 0, {\rm id})$. We get 
$$\forall y \in [0,1] \left( \vert y \vert \leq \frac{1}{n_1+1} \to -y \in [0,1]\right).$$ 
This is false.

\subsubsection{Weakening the base theory}\label{weakening}

Instead of the theory $\Br$, one can also consider its fragment $\Br\!\!\upharpoonright$, 
where we have only the recursor $R_0$ for type-0 recursion, and induction is restricted to 
the {\em scheme of bounded induction}: 
$$A(0) \wedge \forall n^0 (A(n) \to A(n+1)) \to \forall n A(n),$$ 
where $A$ is a bounded formula, possibly with parameters. Note that the arithmetical functionals 
of the language of $\Br\!\!\upharpoonright$ are the so-called Kleene's primitive recursive 
functionals of finite type (see \cite{Kohlenbach(08)}). The theory $\Br\!\!\upharpoonright^+$ is 
the theory $\Br\!\!\upharpoonright$ together with the principles $\bAC$ and $\BC$.

\begin{metatheorem_2}\label{second_metatheorem} 
Suppose that the theory $\Br\!\!\upharpoonright^+$ proves the sentence 
$\forall k^0 \forall f^1 \exists n^0 A(k,\tilde{f},n)$, where $A$ is a $\Sigma$-formula. 
Then there is a closed functional $\phi$ of type $0\to (1 \to 0)$, primitive recursive in the 
sense of Kleene, such that the theory $\Br\!\!\upharpoonright$ proves 
$$\forall k \forall f \exists n\leq \phi(k,f) \, A(k,\tilde{f},n).$$ 
\end{metatheorem_2}

This theorem is proved like the First Metatheorem of Subsection~\ref{metatheorem_subsection}. 
The scheme of bounded induction does not pose a problem for the bounded functional interpretation 
because it is equivalent to a scheme of $\Pi$-formulas: 
$$\forall n^0 \big(A(0) \wedge \forall r<n (A(r) \to A(r+1)) \to A(n)\big),$$ where 
$A$ is a bounded formula, possibly with parameters (not even the recursor $R_0$ is necessary 
to interpret the scheme of bounded induction). However, as it is easy to argue, the presence 
of the recursor $R_0$ (and of $\bAC$) permits to lift induction to $\Sigma$-formulas. 
A well-known trick of bounded arithmetic shows that one can also derive induction for $\Pi$-formulas.

It should be noticed that the three theorems mined in this paper have proofs that can be 
formalized within the restrited theory $\Br\!\!\upharpoonright^+$.

\section{The interpretation and mining of the projection argument}\label{projection}

As reported in Subsection~\ref{adapted}, the weaker projection statement (\ref{near_closest})  is already sufficient to carry on Browder's argument. In this section, we interpret and mine the proof of this projection statement using the bounded functional interpretation. We apply the methodology described at the end of Subsection~\ref{metatheorem_subsection}: we will try to rewrite (\ref{near_closest}) in a quantitative form $\forall k \forall f \exists N A(k,\tilde{f},n)$, with $A$ a $\Sigma$-formula, using the characteristic principles $\bAC$ and $\BC$. Well, (\ref{near_closest}) is equivalent to 
$$\forall k \exists x\in C \left[U(x) = x \wedge \forall y \in C\left( \forall N \left(\Vert U(y) - y \Vert \leq \frac{1}{N+1} \right)\to \Vert x -v_0 \Vert^2 < \Vert y-v_0 \Vert^2 + \frac{1}{k+1}\right)\right]$$ and, hence, equivalent to $$\forall k \exists x\in C \left(U(x) = x \wedge \forall y \in C \,\exists N \left(\Vert U(y) - y \Vert \leq 
\frac{1}{N+1} \to \Vert x -v_0 \Vert^2 < \Vert y-v_0 \Vert^2 + \frac{1}{k+1}\right)\right).$$ 

Since the formula after the quantifier `$\exists N$' is equivalent to a $\Sigma$-formula, by $\BC$ we easily get $$\forall k \exists x\in C \left(U(x) = x \wedge \exists N \forall y \in C \left(\Vert U(y) - y \Vert \leq \frac{1}{N+1} \to \Vert x -v_0 \Vert^2 < \Vert y-v_0 \Vert^2 + \frac{1}{k+1}\right)\right)$$ or, equivalently, $$\forall k \exists N \exists x^C \forall m \left(\Vert U(x) - x \Vert < \frac{1}{m+1} \, \wedge \, \forall y^C \left(\Vert U(y) - y \Vert \leq \frac{1}{N+1} \to \Vert x -v_0 \Vert^2 < \Vert y-v_0 \Vert^2 + \frac{1}{k+1}\right)\right).$$ (We are writing $x^C$ instead of $x \in C$ for typesetting reasons, in order to save space.) In turn, this obviously implies $$\forall k \exists N \forall m \exists x^C \left(\Vert U(x) - x \Vert < \frac{1}{m+1} \wedge \forall y^C \left(\Vert U(y) - y \Vert \leq \frac{1}{N+1} \to \Vert x -v_0 \Vert^2 < \Vert y-v_0 \Vert^2 + \frac{1}{k+1}\right)\right)$$ and, with more reason, $$\forall k \forall f \exists N \, \exists x^C \left(\Vert U(x) - x \Vert < \frac{1}{\tilde{f}(N)+1} \wedge \forall y^C \left(\Vert U(y) - y \Vert \leq \frac{1}{N+1} \to \Vert x -v_0 \Vert^2 < \Vert y-v_0 \Vert^2 + \frac{1}{k+1}\right)\right).$$ 

Given that the formula after the quantifier `$\exists N$' is equivalent to a $\Sigma$-formula, the bounded functional interpretation (see the metatheorem of Section~\ref{modified}) guarantees the existence of a bounding functional of $N$ in terms of $k$ and $\tilde{f}$. The obtaining of the bounding gives the quantitative version of the projection argument. (The reader should compare the end formula above with the corresponding formula given by the monotone functional interpretation in p. 2772 of \cite{Kohlenbach(11)}.)

The bounding property guaranteed by the metatheorem of Section~\ref{modified} is provable in $\Br$ but, following the usual procedure of proof mining studies, we will only worry next about the {\em truth} of the statement in appropriate structures. So, in the remainder of this section, $X$ is a pre-Hilbert space, $C$ is a convex subset of $X$ whose diameter is bounded by a positive natural number $b$, $U$ is a nonexpansive function that maps $C$ into $C$ and $v_0$ is a point of $C$. Since $(\ref{near_closest}$) was proved by induction, it is not a surprise that the bounding of $N$ is defined by recursion:

\begin{proposition}\label{proj1}
For any $k\in \N$ and $f:\N \to \N $, there is $N \in \N$ such that $N\leq (\tilde{f}+1)^{(r)}(0)$ and $$\exists x \in C \left( \|U(x)-x\| < \frac{1}{\tilde{f}(N)+1} \, \wedge \, \forall y \in C \left(\|U(y)-y \|\leq \frac{1}{N+1} \to \|x-v_0\|^2 < \|y-v_0\|^2 + \frac{1}{k+1}\right)\right),$$where $r:=b^2(k+1)$ and $(\tilde{f}+1)^{(r)}$ is the $r$-th fold composition of the function $\tilde{f} + 1$.
\end{proposition}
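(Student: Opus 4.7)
The plan is to iterate a single ``failure step'' $r$ times, with candidate levels $N_i := (\tilde{f}+1)^{(r-i)}(0)$ for $i = 0, 1, \ldots, r$, decreasing from $N_0 = (\tilde{f}+1)^{(r)}(0)$ down to $N_r = 0$ and satisfying the one-step relation $N_i = \tilde{f}(N_{i+1}) + 1$. Since every $N_i$ is bounded by $N_0$, it suffices to show that at least one of these candidate levels witnesses the conclusion. I would argue this by contradiction, assuming none of them does.

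Under this assumption, I would build a sequence $x_0, x_1, \ldots, x_r \in C$ by recursion, maintaining both $\|U(x_i) - x_i\| < \frac{1}{\tilde{f}(N_i)+1}$ and the energy decrease $\|x_{i+1} - v_0\|^2 \leq \|x_i - v_0\|^2 - \frac{1}{k+1}$. For the base case, (\ref{quasi_fixed}) provides an iterate $u_n$ with $\frac{b}{n+1} < \frac{1}{\tilde{f}(N_0)+1}$, which I would take as $x_0$. For the inductive step, the assumed failure of $N_i$ applied to the already-constructed $x_i$ produces $y_i \in C$ with $\|U(y_i) - y_i\| \leq \frac{1}{N_i+1}$ and $\|y_i - v_0\|^2 \leq \|x_i - v_0\|^2 - \frac{1}{k+1}$; setting $x_{i+1} := y_i$, the tolerance becomes $\frac{1}{N_i+1} = \frac{1}{\tilde{f}(N_{i+1})+2} < \frac{1}{\tilde{f}(N_{i+1})+1}$, exactly what is needed to continue.

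Telescoping the energy decrease gives $\|x_r - v_0\|^2 \leq \|x_0 - v_0\|^2 - \frac{r}{k+1} \leq b^2 - \frac{r}{k+1} = 0$, using the diameter bound $\|x_0 - v_0\| \leq b$ and the choice $r = b^2(k+1)$. Applying the failure of $N_r = 0$ one last time to $x_r$ then produces a $y_r \in C$ with $\|y_r - v_0\|^2 \leq -\frac{1}{k+1} < 0$, the desired contradiction.

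The main obstacle, and really the only delicate point, is calibrating the recursion on $N_i$ correctly. The levels must be indexed from $r$ down to $0$, so that each failure pushes us toward the lax end of the tolerance scale; and the ``$+1$'' in $N_i = \tilde{f}(N_{i+1}) + 1$ is precisely what converts $\|U(y_i) - y_i\| \leq \frac{1}{N_i+1}$ into the strict inequality $< \frac{1}{\tilde{f}(N_{i+1})+1}$ needed at the next stage. Once this bookkeeping is settled, the rest is a routine telescoping computation.
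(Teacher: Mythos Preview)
Your proof is correct and follows essentially the same descent argument as the paper: iterate the failure hypothesis along the levels $(\tilde{f}+1)^{(r-i)}(0)$, use (\ref{quasi_fixed}) to seed $x_0$, and telescope the energy drop to a negative square. The only cosmetic difference is that the paper first proves a variant with $\leq$ in place of $<$ and bound $\tilde{f}^{(r)}(0)$, then substitutes $f+1$ for $f$ to recover the strict version, whereas you fold the ``$+1$'' directly into the recursion $N_i = \tilde f(N_{i+1})+1$ to manufacture the strict inequality at each step---the two organizations are equivalent.
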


\begin{proof} It is convenient to prove instead the following slight modification of the above proposition: for any $k\in \N$ and $f:\N \to \N $, there is $N \in \N$ such that $N\leq \tilde{f}^{(r)}(0)$ and $$\exists x \in C \left( \|U(x)-x\| \leq \frac{1}{\tilde{f}(N)+1} \, \wedge \, \forall y \in C \left(\|U(y)-y \|\leq \frac{1}{N+1} \to \|x-v_0\|^2 < \|y-v_0\|^2 + \frac{1}{k+1}\right)\right),$$where $r:=b^2(k+1)$.

The difference is twofold. In the new version, the witness $N$ is taken to be bounded by the $r$-th fold composition of $\tilde{f}$, instead of $\tilde{f} +1$. This better bound is possible because we have relaxed the conclusion (replacing a $<$ by a $\leq$). Note that the above proposition is an immediate consequence of the new version by applying it to $f+1$.

Assume that the modified result is not true. Then there are $k\in \N $ and $f:\N \to \N $, such that for all $N\in \N$ with $N\leq \tilde{f}^{(r)}(0)$:

\begin{equation}\label{proj.eq1}
\forall x^C \left( \|U(x)-x\| \leq \frac{1}{\tilde{f}(N)+1} \to \exists y^C \left(\|U(y)-y\|\leq \frac{1}{N+1} \wedge \|y - v_0\|^2 \leq \|x-v_0\|^2-\frac{1}{k+1}\right)\right).
\end{equation}

First of all, note that the $i$-sequence given by the expression $\tilde{f}^i(0)$ is monotone (because $\tilde{f}$ is). We define a finite sequence $x_0$, $x_1$, \ldots, $x_r$, $x_{r+1}$ of elements of $C$ as follows:

\noindent $\underline{x_0}:$ \\ By (\ref{quasi_fixed}), let $x_0$ be such that

\begin{equation*}
\|U(x_0)- x_0\| \leq \frac{1}{\tilde{f}^{(r+1)}(0)+1}.
\end{equation*}

\noindent $\underline{x_{j+1} \mbox{, for } j\leq r}:$\\

Assume that we have $x_j$ such that $\|U(x_j)-x_j\| \leq \frac{1}{\tilde{f}^{(r-j+1)}(0)+1}$. By \eqref{proj.eq1} applied to $N=\tilde{f}^{(r-j)}(0)$ and to $x=x_j$, we conclude that there is $y\in C$ satisfying

\begin{equation*}
\|U(y)-y\|\leq \frac{1}{\tilde{f}^{(r-j)}(0)+1} \, \wedge \, \|y- v_0\|^2 \leq \|x_j - v_0\|^2-\frac{1}{k+1}.
\end{equation*}

Let $x_{j+1}$ be one such $y$.

By the definition, for all $j\leq r$,

\begin{equation*}
\|x_{j+1} - v_0\|^2\leq \|x_j - v_0\|^2 - \frac{1}{k+1},
\end{equation*}
\noindent which implies the contradiction 
\begin{equation*}
\|x_{r+1} -v_0\|^2\leq \|x_0-v_0\|^2-\frac{r+1}{k+1} \leq b^2 - \frac{b^2(k+1)+1}{k+1} < 0.
\end{equation*} 

\end{proof}

We have insisted upon the formulation of the above proposition with a $<$ instead of a $\leq$ because it is the former version, not the latter, which is the quantitative form of (\ref{near_closest}). In this paper, we make the point of being very clear about the theoretical background of the calculations. Nevertheless, for the strict purpose of getting bounds, we could have worked with the simpler bound (and this would reflect as well in the bounds computed in the remainder of this paper). As a matter of fact, the simpler bound can also be accounted theoretically but the explanation for this relies on intensional majorizability (cf. \cite{Pinto(19)}).

After this comment, let us proceed with the minings. Claim (I) of Section~\ref{modified} is a refinement of the projection result. Its bounded functional interpretation is similar to the interpretation of (\ref{near_closest}). It is:

$$\forall k \forall f \exists N \, \exists x^C \left(\Vert U(x) - x \Vert < \frac{1}{\tilde{f}(N)+1} 
\wedge \forall y^C \left(\Vert U(y) - y \Vert \leq \frac{1}{N+1} \to \langle x-v_0,x-y \rangle < \frac{1}{k+1}\right)\right).$$

The corresponding mining result is:

\begin{proposition}\label{proj3}
For any $k\in \N$ and $f:\N \to \N $, there is $N \in \N$ such that $N\leq 12b((\check{f}+1)^{(R)}(0)+1)^2$ and $$\exists x\in C \left( \|U(x)-x \| < \frac{1}{\tilde{f}(N)+1} \, \wedge \, \forall y\in C \left(\|U(y)-y\|\leq \frac{1}{N+1} \to \langle x-v_0,x-y\rangle < \frac{1}{k+1}\right)\right),$$ with $R:=b^4(k+1)^2+b^2$ and $\check{f}(m):=\max\{ \tilde{f}(12b(m+1)^2),\, 12b(m+1)^2 \}$.
\end{proposition}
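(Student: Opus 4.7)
The plan is to apply Proposition \ref{proj1} with the parameters $k_0 := b^2(k+1)^2$ and $f_1 := \check{f}$. Since $\check{f}$ is monotone nondecreasing (both summands $\tilde{f}(12b(m+1)^2)$ and $12b(m+1)^2$ are), we have $\tilde{f}_1 = \check{f}$, and Proposition \ref{proj1} runs with $r = b^2(k_0+1) = b^4(k+1)^2 + b^2 = R$. It returns some $N_1 \leq (\check{f}+1)^{(R)}(0)$ together with $x \in C$ such that $\|U(x)-x\| < \frac{1}{\check{f}(N_1)+1}$ and, for every $z \in C$ with $\|U(z)-z\| \leq \frac{1}{N_1+1}$, $\|x-v_0\|^2 < \|z-v_0\|^2 + \frac{1}{b^2(k+1)^2+1}$. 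I then set $N := 12b(N_1+1)^2$, which clearly meets the required bound; the first conjunct of the conclusion follows immediately, since $\check{f}(N_1) \geq \tilde{f}(12b(N_1+1)^2) = \tilde{f}(N)$ by the very definition of $\check{f}$.

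For the second conjunct, fix $y \in C$ with $\|U(y)-y\| \leq \frac{1}{N+1}$, put $\lambda := \frac{1}{b^2(k+1)} \in (0,1]$ and $z_\lambda := (1-\lambda)x + \lambda y \in C$. Provided one can apply the approximate-minimality clause to $z_\lambda$, expanding $\|z_\lambda - v_0\|^2 = \|x - v_0\|^2 + 2\lambda\langle x - v_0, y - x\rangle + \lambda^2\|y-x\|^2$ and rearranging yields
\[
\langle x - v_0, x - y\rangle \; < \; \frac{\lambda b^2}{2} + \frac{1}{2\lambda(b^2(k+1)^2+1)} \; = \; \frac{1}{2(k+1)} + \frac{b^2(k+1)}{2(b^2(k+1)^2+1)},
\]
and a direct computation shows that the second summand is strictly less than $\frac{1}{2(k+1)}$, giving the required $\langle x-v_0,x-y\rangle < \frac{1}{k+1}$.

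The main obstacle is therefore to justify the use of the approximate-minimality clause on $z_\lambda$, i.e.\ to check that $\|U(z_\lambda) - z_\lambda\| \leq \frac{1}{N_1+1}$. The naive triangle-inequality estimate $\|U(z_\lambda)-z_\lambda\| \leq 2\lambda b + \|U(x)-x\|$ is too weak: its leading term $2\lambda b = \frac{2}{b(k+1)}$ does not depend on $N_1$ and is useless when $N_1$ is large. The crucial idea is to instead use the Hilbert-space convexity identity
\[
\|w - z_\lambda\|^2 \; = \; (1-\lambda)\|w-x\|^2 + \lambda\|w-y\|^2 - \lambda(1-\lambda)\|y-x\|^2
\]
applied to $w := U(z_\lambda)$. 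Combined with the nonexpansiveness estimates $\|U(z_\lambda)-x\| \leq \lambda\|y-x\| + \|U(x)-x\|$ and $\|U(z_\lambda)-y\| \leq (1-\lambda)\|y-x\| + \|U(y)-y\|$, the $\lambda(1-\lambda)\|y-x\|^2$ contribution cancels and one gets
\[
\|U(z_\lambda)-z_\lambda\|^2 \; \leq \; 2\lambda(1-\lambda)\|y-x\|\bigl(\|U(x)-x\|+\|U(y)-y\|\bigr) + (1-\lambda)\|U(x)-x\|^2 + \lambda\|U(y)-y\|^2 .
\]
Since $\check{f}(N_1) \geq 12b(N_1+1)^2$ forces $\|U(x)-x\|, \|U(y)-y\| \leq \frac{1}{12b(N_1+1)^2+1}$, and since $\|y-x\| \leq b$ and $\lambda \leq 1$, a routine estimate bounds the right-hand side by $\frac{25}{72(N_1+1)^2} < \frac{1}{(N_1+1)^2}$. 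This is precisely why the factor $12b(m+1)^2$ was built into $\check{f}$ and into the final bound on $N$.
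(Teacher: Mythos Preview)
Your proof is correct and follows essentially the same route as the paper. The paper factors the argument through Lemma~\ref{lem.conv} (the quasi-fixed-point convexity estimate, which is exactly your Hilbert-space identity computation), Corollary~\ref{proj2} (Proposition~\ref{proj1} applied with $\check{f}$, yielding the bound for \emph{all} $\gamma\in[0,1]$), and Lemma~\ref{prop.lem} (the passage from the square-norm inequality to the inner-product inequality); you instead inline both lemmas and work with the single value $\lambda=\tfrac{1}{b^2(k+1)}$, which is a harmless streamlining of the same argument.
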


As we will explain, this mining can be obtained from Proposition~\ref{proj1} and the following two estimates, essentially due to Kohlenbach in \cite{Kohlenbach(11)}:

\begin{lemma}\label{lem.conv}
For all $k\in \N $ and $x_1,x_2\in C$, $$\bigwedge_{j=1}^{2}\left( \|U(x_j) - x_j \|\leq \frac{1}{12b(k+1)^2} \right)\, \to \, \forall \gamma \in [0,1] \left(\|U(w_\gamma(x_1,x_2))-w_\gamma(x_1,x_2)\| < \frac{1}{k+1}\right).$$ 
\end{lemma}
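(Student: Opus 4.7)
The plan is to derive the bound from two ingredients: nonexpansiveness of $U$ and the standard inner-product identity, valid in any pre-Hilbert space,
$$\|(1-\gamma)x_1 + \gamma x_2 - z\|^2 \;=\; (1-\gamma)\|x_1-z\|^2 + \gamma\|x_2-z\|^2 - \gamma(1-\gamma)\|x_1-x_2\|^2,$$
which I would apply with $z := U(w)$, writing $w := w_\gamma(x_1,x_2) = (1-\gamma)x_1+\gamma x_2$ and $\varepsilon := \tfrac{1}{12b(k+1)^2}$.

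First, I would bound each $\|x_j - U(w)\|$ from above. By nonexpansiveness of $U$ and the hypothesis $\|U(x_j)-x_j\| \leq \varepsilon$,
$$\|x_j - U(w)\| \;\leq\; \|x_j - U(x_j)\| + \|U(x_j) - U(w)\| \;\leq\; \varepsilon + \|x_j - w\|$$
for $j=1,2$. Since $\|x_1-w\| = \gamma\|x_1-x_2\|$ and $\|x_2-w\| = (1-\gamma)\|x_1-x_2\|$, squaring these two inequalities and substituting into the identity above produces a purely quadratic contribution $(1-\gamma)\|x_1-w\|^2 + \gamma\|x_2-w\|^2$ which simplifies to $\gamma(1-\gamma)\|x_1-x_2\|^2$ and thus cancels against the last term of the identity. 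What remains is the linear-in-$\varepsilon$ estimate
$$\|w - U(w)\|^2 \;\leq\; \varepsilon^2 + 4\varepsilon\gamma(1-\gamma)\|x_1-x_2\|.$$
From here I would finish by routine bookkeeping: $4\gamma(1-\gamma)\leq 1$, $\|x_1-x_2\|\leq b$, and the explicit choice of $\varepsilon$ give
$$\|w - U(w)\|^2 \;\leq\; \varepsilon b + \varepsilon^2 \;=\; \frac{1}{12(k+1)^2} + \frac{1}{144\,b^2(k+1)^4} \;<\; \frac{1}{(k+1)^2},$$
so that $\|U(w)-w\| < \tfrac{1}{k+1}$ on taking square roots.

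The delicate step, and the only one I expect to require care, is the cancellation in the previous paragraph. A naive estimate based solely on nonexpansiveness (i.e., without invoking the Hilbert-space identity with its negative term $-\gamma(1-\gamma)\|x_1-x_2\|^2$) would leave a contribution of order $\|x_1-x_2\|^2$ uncontrolled by $\varepsilon$, and would force $\varepsilon$ to shrink like $1/(k+1)$ rather than $1/(b(k+1)^2)$. It is precisely this algebraic cancellation that makes the constant $12b(k+1)^2$ in the statement sufficient; the factor $12$ is then just a comfortable margin ensuring the inequality is strict.
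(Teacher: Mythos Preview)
Your proof is correct: the convex-combination identity in a pre-Hilbert space is precisely what makes the $\gamma(1-\gamma)\|x_1-x_2\|^2$ term cancel, and your arithmetic at the end checks out. The paper itself does not prove this lemma but attributes the bound to Kohlenbach \cite{Kohlenbach(11)}; your argument is the natural direct verification of that bound.
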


In the above (and below), $w_\gamma(u,v):= (1 - \gamma) u+ \gamma v, \, \text{ for }\, \gamma \in [0,1]$.

\begin{lemma}\label{prop.lem}
For all $k\in \N $ and $x,y\in C$, $$\forall \gamma\in [0,1] \! \left(  \|x-v_0\|^2\leq \|w_\gamma(x,y)-v_0\|^2+\frac{1}{b^2(k+1)^2+1}\right) \, \to \, 
\langle x-v_0, x-y \rangle < \frac{1}{k+1}.$$
\end{lemma}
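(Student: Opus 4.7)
The plan is to reduce the hypothesis to a quadratic inequality in the parameter $\gamma$ and then specialize $\gamma$ so that the $\frac{1}{k+1}$ bound falls out with strict inequality. The key identity to deploy is
\[
w_\gamma(x,y) - v_0 = (x - v_0) - \gamma (x - y),
\]
so that $\|w_\gamma(x,y) - v_0\|^2 = \|x-v_0\|^2 - 2\gamma\langle x-v_0,\, x-y\rangle + \gamma^2\|x-y\|^2$. Substituting this expansion into the hypothesis and canceling the common term $\|x-v_0\|^2$, the assumption becomes
\[
\forall \gamma\in[0,1]\;\Bigl(\,2\gamma\langle x-v_0,\, x-y\rangle \;\le\; \gamma^2\|x-y\|^2 + \tfrac{1}{b^2(k+1)^2+1}\Bigr).
\]

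Since $x,y \in C$ and $C$ has diameter bounded by $b$, we have $\|x-y\|^2 \le b^2$. I would then choose
\[
\gamma \;:=\; \tfrac{1}{b^2(k+1)},
\]
which lies in $[0,1]$ (we are working with $b\ge 1$, and the case $b=0$ collapses $C$ to a single point and makes the conclusion trivial). With this choice, $\gamma^2\|x-y\|^2 \le \tfrac{1}{b^2(k+1)^2}$, so the inequality from the previous paragraph gives
\[
\tfrac{2}{b^2(k+1)}\,\langle x-v_0,\, x-y\rangle \;\le\; \tfrac{1}{b^2(k+1)^2} + \tfrac{1}{b^2(k+1)^2+1}.
\]
Multiplying through by $\tfrac{b^2(k+1)}{2}$ and using that the second summand on the right is \emph{strictly} smaller than the first, one obtains $\langle x-v_0,\, x-y\rangle < \tfrac{1}{k+1}$, which is exactly the desired conclusion.

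The only subtle step is the strictness of the final inequality, and this is precisely the role of the ``$+1$'' in the denominator $b^2(k+1)^2+1$: without it, the optimal $\gamma$ would only give $\le \tfrac{1}{k+1}$. I do not foresee any genuine obstacle in the argument; the entire matter is a one-variable quadratic optimization, and the bound $\|x-y\| \le b$ is available directly from the convex-set axiom of $\Br$. No appeal to the characteristic principles $\bAC$ or $\BC$ is needed, so this lemma can be proved outright in $\Br$ (or even in $\Br\!\!\upharpoonright$), which is consistent with its role as a purely algebraic refinement step feeding into Proposition~\ref{proj3}.
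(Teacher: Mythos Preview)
Your proof is correct. The paper does not actually give its own proof of this lemma; it attributes the estimate to Kohlenbach \cite{Kohlenbach(11)} and states it without proof, noting only that it is the quantitative mining of the qualitative implication $\forall \gamma\in[0,1]\,(\|x-v_0\|^2\le\|w_\gamma(x,y)-v_0\|^2)\to\langle x-v_0,x-y\rangle\le 0$. Your argument --- expand the square, use the diameter bound $\|x-y\|\le b$, and specialize $\gamma=\tfrac{1}{b^2(k+1)}$ to convert the quadratic inequality into the desired strict bound --- is exactly the standard computation behind this estimate, and your remark that the ``$+1$'' in the denominator is what buys the strict inequality is precisely the point.
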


Lemma \ref{lem.conv} is nothing but the mining of (\ref{convexity}) using the bounded functional interpretation. The explanation for this is more straightforward than that of the  projection result (\ref{near_closest}). Let us do it. As usual, we apply the methodology of Subsection~\ref{metatheorem_subsection}. The above cited claim (\ref{convexity}) says that, for all $x_1,x_2 \in C$, $$\bigwedge_{j=1}^2 \forall m \left( \Vert U(x_j) - x_j \Vert \leq \frac{1}{m+1}\right) \to \forall \gamma\in [0,1] \, \forall k  \left(\Vert U(w_\gamma(x_1,x_2))-w_\gamma(x_1,x_2) \Vert < \frac{1}{k+1}\right).$$ By classical logic, this is equivalent to $$\forall k \exists m \left[\,\, \bigwedge_{j=1}^{2}\left( \|U(x_j) - x_j \|\leq \frac{1}{m+1} \right)\, \to \, \forall \gamma \in [0,1] \left(\|U(w_\gamma(x_1,x_2))-w_\gamma(x_1,x_2)\| < \frac{1}{k+1}\right)\right].$$ Given that $x_1$ and $x_2$ are arbitrary elements of $C$, two successive applications of $\BC$ yield, $$\forall k \exists m \forall x_1^C \forall x_2^C \left[\,\, \bigwedge_{j=1}^{2}\left( \|U(x_j) - x_j \|\leq \frac{1}{m+1} \right)\, \to \, \forall \gamma \in [0,1] \left(\|U(w_\gamma(x_1,x_2))-w_\gamma(x_1,x_2)\| < \frac{1}{k+1}\right)\right].$$ The formula between square parentheses is equivalent to a $\Sigma$-formula and, therefore, the metatheorem of Section~\ref{modified} predicts a bound for $m$ in terms of $k$. That bound was computed by Kohlenbach and is presented in Lemma~\ref{lem.conv}.

We have the following interim result:

\begin{corollary}\label{proj2}
For any $k\in \N$ and $f:\N \to \N $, there is $N \in \N$ such that $N\leq 12b((\check{f}+1)^{(r)}(0)+1)^2$ and there is $x\in C$ for which the following two properties hold: $$\|U(x)-x\| < \frac{1}{\tilde{f}(N)+1}$$ and $$ \forall y\in C \left(\|U(y)-y\| \leq \frac{1}{N+1} \to \forall \gamma \in [0,1] \left(\|x-v_0\|^2 < \|w_\gamma(x,y) - v_0\|^2 + \frac{1}{k+1}\right)\right),$$ with $r:=b^2(k+1)$ and $\check{f}(m):=\max\{ \tilde{f}(12b(m+1)^2),\, 12b(m+1)^2 \}$.
\end{corollary}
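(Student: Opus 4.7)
The plan is to derive the corollary by feeding a suitably modified parameter into Proposition \ref{proj1} and then using Lemma \ref{lem.conv} to transfer the conclusion from an arbitrary witness $y$ to the convex combination $w_\gamma(x,y)$. The function $\check{f}$ is designed precisely so that, when Proposition \ref{proj1} is applied with $\check{f}$ in place of $f$, the resulting index $N'$ controls $\|U(x)-x\|$ simultaneously in two ways: with respect to the outer function $\tilde{f}$ at the ``inflated'' index $12b(N'+1)^2$, and with respect to the convexity rate $12b(\cdot)^2$ coming from Lemma \ref{lem.conv}.

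First I would observe that $\check{f}$ is non-decreasing (being the max of compositions of non-decreasing maps), so $\tilde{\check{f}} = \check{f}$, and apply Proposition \ref{proj1} to the pair $(\check{f},k)$. This produces some $N' \leq (\check{f}+1)^{(r)}(0)$ with $r = b^2(k+1)$ and an $x \in C$ satisfying $\|U(x)-x\| < \frac{1}{\check{f}(N')+1}$ together with the raw projection property: for all $y \in C$, if $\|U(y)-y\| \leq \frac{1}{N'+1}$ then $\|x-v_0\|^2 < \|y-v_0\|^2 + \frac{1}{k+1}$. I would then set $N := 12b(N'+1)^2$, which gives the stated bound on $N$, and since $\check{f}(N') \geq \tilde{f}(12b(N'+1)^2) = \tilde{f}(N)$ the first clause $\|U(x)-x\| < \frac{1}{\tilde{f}(N)+1}$ follows immediately.

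For the second clause, given $y \in C$ with $\|U(y)-y\| \leq \frac{1}{N+1}$ and $\gamma \in [0,1]$, I would use that $\check{f}(N') \geq 12b(N'+1)^2$ to force both $\|U(x)-x\|$ and $\|U(y)-y\|$ below $\frac{1}{12b(N'+1)^2}$; Lemma \ref{lem.conv}, applied with $k$ replaced by $N'$ and with $x_1 := x$, $x_2 := y$, then yields $\|U(w_\gamma(x,y)) - w_\gamma(x,y)\| < \frac{1}{N'+1}$ uniformly in $\gamma$. Since $w_\gamma(x,y) \in C$ by the convexity axiom for $C$, I can plug it in as the ``$y$'' of the raw projection property to obtain exactly $\|x-v_0\|^2 < \|w_\gamma(x,y)-v_0\|^2 + \frac{1}{k+1}$, completing the argument.

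The main obstacle is not conceptual but bookkeeping: one has to chase the precise inequalities so that the bound $12b((\check{f}+1)^{(r)}(0)+1)^2$ emerges cleanly, and verify that the strict/non-strict inequalities supplied by Lemma \ref{lem.conv} and by $N' < \check{f}(N')$ match those required in the statement. These are routine once $\check{f}$ has been chosen as above; the substantive content lies in the interplay between the projection rate $r = b^2(k+1)$ of Proposition \ref{proj1} and the quadratic convexity rate $12b(\cdot+1)^2$ of Lemma \ref{lem.conv}, which is exactly what the definition of $\check{f}$ encodes.
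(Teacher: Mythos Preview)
Your proposal is correct and follows essentially the same approach as the paper: apply Proposition~\ref{proj1} with $\check{f}$ in place of $f$ to obtain $N'$ and $x$, set $N:=12b(N'+1)^2$, use the two lower bounds built into $\check{f}$ to verify the first clause and to feed Lemma~\ref{lem.conv}, and finally substitute $w_\gamma(x,y)$ into the projection inequality. Your explicit remark that $\check{f}$ is monotone (hence $\tilde{\check{f}}=\check{f}$) is a detail the paper leaves implicit, and the strict/non-strict bookkeeping you flag works out exactly as you describe.
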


\begin{proof} Let $k$ and $f$ be given. By Proposition \ref{proj1}, there exist $x\in C$ and $N'\in \N$ such that $N' \leq (\check{f}+1)^{(r)}(0)$ with 

\begin{equation}\label{help_1}
\Vert U(x) - x \Vert < \frac{1}{\check{f}(N') +1} \wedge \forall y\in C \left( \Vert U(y) - y\Vert \leq 
\frac{1}{N'+1} \to \|x-v_0\|^2 < \|y-v_0\|^2 + \frac{1}{k+1}\right),
\end{equation} 

where $r = b^2 (k+1)$. Let $N : = 12b(N' +1)^2$. Clearly, $N \leq 12b ((\check{f}+1)^{(r)}(0) + 1)^2$. This entails that $$\Vert U(x) - x \Vert < \frac{1}{\tilde{f}(N) +1}$$ because $\tilde{f}(N) = \tilde{f}(12b(N' +1)^2) \leq \check{f}(N')$. Now, take $y\in C$ such that $\Vert U(y) - y\Vert \leq \frac{1}{N+1}$. Hence $\Vert U(y) - y\Vert \leq \frac{1}{12b(N' +1)^2}$. On the other hand, we also have $$\Vert U(x) -x \Vert < \frac{1}{\check{f}(N') +1} \leq \frac{1}{12b(N'+1)^2}.$$ By Lemma~\ref{lem.conv}, we get $\Vert U(w_\gamma(x,y)) - w_\gamma(x,y)\Vert \leq \frac{1}{N'+1}$. The result follows from the second conjunct of  (\ref{help_1}). 
\end{proof}

Lemma~\ref{prop.lem} is nothing but the mining of the following result: 
$$\forall \gamma \in [0,1]\! \left(  \|x-v_0\|^2\leq \|w_\gamma(x,y)-v_0\|^2\right) \, \to \, \langle x-v_0, x-y \rangle 
\leq 0.$$

This result is implicit in Browder's proof \cite{Browder(67)} and is needed to show (I) of Section~\ref{modified}. Proposition~\ref{proj3} is an immediate consequence of Lemma~\ref{prop.lem} and Corollary~\ref{proj2}:  just instantiate in the latter $k$ by $b^2(k+1)^2$.

\subsection{An observation concerning the mining of the projection argument}\label{proj_observation}

In rewriting (\ref{near_closest}) in a quantitative form, we seem to have lost the equivalence between (\ref{near_closest}) and the quantitative form itself. A similar situation already occurred in the proof of Corollary~\ref{prediction}. There is nothing wrong in proceeding in this manner (as long as the weakening is sufficient to carry out the mining analysis through the end). However, it so happens that the equivalence in these two cases has not been lost provided that one uses the characteristic principles. In the case of Corollary~\ref{prediction}, the equivalence obtains due to the bounded choice principle. In the case of the mining of the projection argument, one needs to argue that the logical transition from 

$$\forall k \exists N \exists x^C \forall m \left(\Vert U(x) - x \Vert < \frac{1}{m+1} \, \wedge \, \forall y^C \left(\Vert U(y) - y \Vert \leq \frac{1}{N+1} \to \Vert x -v_0 \Vert^2 < \Vert y-v_0 \Vert^2 + \frac{1}{k+1}\right)\right)$$ 
to 
$$\forall k \forall f \exists N \exists x^C \left(\Vert U(x) - x \Vert < \frac{1}{\tilde{f}(N)+1} \wedge \forall y^C \left(\Vert U(y) - y \Vert \leq \frac{1}{N+1} \to \Vert x -v_0 \Vert^2 < \Vert y-v_0 \Vert^2 + \frac{1}{k+1}\right)\right)$$ 
is, in fact, an equivalence. Well, the first formula above is equivalent to 
$$\forall k \exists N \exists x^C \forall m \left(\Vert U(x) - x \Vert \leq \frac{1}{m+1} \wedge \forall y^C \left(\Vert U(y) - y \Vert < \frac{1}{N+1} \to \Vert x -v_0 \Vert^2 \leq \Vert y-v_0 \Vert^2 + \frac{1}{k+1}\right)\right).$$ 
We made this move to ensure that the formula after the quantifier `$\forall m$' is equivalent to a $\Pi$-formula. Hence, by (the contrapositive of) $\BC$, this is equivalent to 
$$\forall k \exists N \forall m \exists x^C \left(\Vert U(x) - x \Vert \leq \frac{1}{m+1} \wedge \forall y^C \left(\Vert U(y) - y \Vert < \frac{1}{N+1} \to \Vert x -v_0 \Vert^2 \leq \Vert y-v_0 \Vert^2 + \frac{1}{k+1}\right)\right).$$
The formula after the quantifier `$\forall m$' is equivalent to a $\Pi$-formula and so, by (the contrapositive of) $\bAC$, we obtain
$$\forall k \forall f \exists N \exists x^C \left(\Vert U(x) - x \Vert \leq \frac{1}{\tilde{f}(N)+1} \wedge \forall y^C \left(\Vert U(y) - y \Vert < \frac{1}{N+1} \to \Vert x -v_0 \Vert^2 \leq \Vert y-v_0 \Vert^2 + \frac{1}{k+1}\right)\right)$$
This is equivalent to what we want. Note -- and this is interesting -- the equivalence for the projection case uses the set-theoretically false collection principles.

We had to perform a curious dance between strict and nonstrict inequalities in order to put the formulas in the right complexity class, but this is unavoidable without the intensional sign mentioned in Subsection~\ref{formal_subsection}.

\section{A useful general principle and its mining}\label{generalprinciple}

In this section, we isolate the technique that replaces the weak compactness argument by the 
Heine/Borel covering principle. We also obtain a quantitative theorem that can be used in 
a number of situations. The formal theory behind our result is the theory of bounded metric spaces. 
The language has two base types 0 and $X$, the latter for a (bounded) metric space. The situation 
is analogous to the language of the theory $\Br$ of the Subsection~\ref{formal_subsection}. 
In the case at hand, there are only two (mathematical) constants for the metric spaces: 
a constant $d$ of type $X \to (X \to 1)$ for the distance function, and a positive constant 
$b$ of type 0 that bounds the diameter of $X$. Equality $x =_X y$ between elements of the 
metric space is defined as $d(x,y) =_\R 0$. The theory $\Ms$ follows the same framework as before. 
We have the special (and simplifying) axiom $\forall x^X,y^X \forall n^0 (d(x,y)(n) =_0 d(x,y)_\R(n))$ 
and the following four (mathematical) axioms for bounded metric spaces:

\begin{enumerate}
\item[(i)] $\forall x^X (d(x,x) =_\R 0_\R)$
\item[(ii)] $\forall x^X,y^X (d(x,y) =_\R d(y,x))$
\item[(iii)] $\forall x^X,y^X,z^X (d(x,z) \leq_\R d(x,y) + d(y,z))$
\item[(iv)] $\forall x^X, y^X (d(x,y) \leq_\R b_\R)$
\end{enumerate} 
The notion of bounded quantification in the bounded metric setting is similar to the one given 
in Subsection~\ref{metatheorem_subsection}. Instead of $\forall x\in C$ and $\exists x \in C$ 
we now have $\forall x^X$ and $\exists x^X$ (given that the metric space is bounded, this 
quantification runs in parallel with the boundedness of $C$ in the previous section). 
{\em Mutatis mutandis}, we have the characteristic principles $\bAC$ and $\BC$. For the latter, 
the third scheme of bounded collection (see Subsection~\ref{metatheorem_subsection}) takes 
now the form 
$$\forall x^X \exists n^0 A(x,n) \to \exists n \forall x^X \exists k\leq n \, A(x,k),$$ 
where the $A$ is a $\Sigma$-formula (possibly with parameters).  

The theory $\Ms^+$ is the theory $\Ms$ together with the principles $\bAC$ and $\BC$.

\begin{metatheorem_3} Suppose that the theory $\Ms^+$ proves the sentence 
$\forall k^0 \forall f^1 \exists n^0 A(k,\tilde{f},n)$, where $A$ is a $\Sigma$-formula. 
Then there is a closed functional $\phi$ of type $0\to (1 \to 0)$ such that the theory 
$\Ms$ proves $$\forall k \forall f \exists n\leq \phi(k,f) \, A(k,\tilde{f},n).$$ 
\end{metatheorem_3}

The next proposition isolates our Heine/Borel technique (the notation is purposely informal):

\begin{proposition}\label{gp.prop_2} The theory $\Ms^+$ proves the following mathematical 
statement. Let $U$ be a map from $X$ to $X$ and $(u_n)_{n\in\N}$ a sequence of elements of $X$ such 
that $\lim_n d(U(u_n),u_n) = 0$. Consider $F:=Fix(U)$. Given 
$k\in \N$, $\lambda \in \R$ and $\theta: X \to \R$, if 
\begin{equation}\label{gen-principle-hyp-1}
\forall y\in F \left( \lambda < \theta(y) + \frac{1}{k+1}\right)
\end{equation}
then, for $n$ sufficiently large, 
\begin{equation}\label{conclusion-gen-principle}
\lambda < \theta(u_n) + \frac{1}{k+1}
\end{equation}
\end{proposition}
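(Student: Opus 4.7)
The plan is to mimic, in the abstract setting of the theory $\Ms^+$, the Heine/Borel argument that was used at the end of Section~\ref{modified} to close the modified proof of Browder's theorem. The starting point is to unpack the hypothesis $\forall y \in F\,(\lambda < \theta(y) + \tfrac{1}{k+1})$ by rewriting the condition $y \in F$ as $\forall m\,(d(U(y),y) \leq \tfrac{1}{m+1})$. The hypothesis (\ref{gen-principle-hyp-1}) thereby becomes
$$\forall y^X \left( \forall m \left( d(U(y),y) \leq \tfrac{1}{m+1} \right) \to \lambda < \theta(y) + \tfrac{1}{k+1} \right),$$
which is classically equivalent to
$$\forall y^X \, \exists m \left( d(U(y),y) \leq \tfrac{1}{m+1} \to \lambda < \theta(y) + \tfrac{1}{k+1} \right).$$

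Next I would verify that the matrix of this formula is (equivalent to) a $\Sigma$-formula: the premise $d(U(y),y) \leq \tfrac{1}{m+1}$ is a $\Pi$-condition and the conclusion $\lambda < \theta(y) + \tfrac{1}{k+1}$ is a $\Sigma$-condition, so the implication reduces to a disjunction of two $\Sigma$-formulas and hence is $\Sigma$. Since $y$ ranges over the bounded metric space $X$, the third scheme of bounded collection $\BC$ (in the form appropriate for $\Ms^+$) now applies and produces some $\ell \in \N$ with
$$\forall y^X \, \exists m \leq \ell \left( d(U(y),y) \leq \tfrac{1}{m+1} \to \lambda < \theta(y) + \tfrac{1}{k+1} \right).$$
Because $d(U(y),y) \leq \tfrac{1}{\ell+1}$ implies $d(U(y),y) \leq \tfrac{1}{m+1}$ for every $m \leq \ell$, the last displayed line simplifies to the single statement
$$\forall y^X \left( d(U(y),y) \leq \tfrac{1}{\ell+1} \to \lambda < \theta(y) + \tfrac{1}{k+1} \right).$$

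To conclude, I would use the assumption $\lim_n d(U(u_n),u_n) = 0$ to pick $N \in \N$ such that $d(U(u_n),u_n) \leq \tfrac{1}{\ell+1}$ for all $n \geq N$. Instantiating the previous displayed formula at $y := u_n$ then yields (\ref{conclusion-gen-principle}) for all $n \geq N$, which is exactly what ``for $n$ sufficiently large'' asks for.

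The main obstacle I anticipate is the logical-complexity bookkeeping in step two: one must be sure that the formula appearing under $\BC$ is genuinely $\Sigma$ once $\theta$ is represented in the formal language, so that bounded collection is indeed applicable; this is the whole point of carefully distinguishing $<_\R$ (which is $\Sigma^0_1$) from $\leq_\R$ (which is $\Pi^0_1$) and of having placed $\lambda < \theta(y)+\tfrac{1}{k+1}$ in the consequent. Once this is in order, the argument is essentially the Heine/Borel move already illustrated in Section~\ref{modified}, and the remaining steps amount to routine manipulations.
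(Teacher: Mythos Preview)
Your proposal is correct and follows essentially the same argument as the paper: unpack $y\in F$ as a $\Pi$-condition, push the universal $\forall m$ into an existential, apply $\BC$ over the bounded type $X$, simplify using monotonicity in $m$, and finish with $\lim_n d(U(u_n),u_n)=0$. The only difference is that you spell out the $\Sigma$-formula verification in more detail than the paper does, which is fine.
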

\begin{proof} We reason inside $\Ms^+$. By hypothesis 
$$\forall y\in X\left( \forall m\in \N \left( d(U(y),y)\leq \frac{1}{m+1} \right) \to \lambda < \theta(y) + 
\frac{1}{k+1}\right).$$ 
Hence, $$\forall y \in X  \exists m\in \N \left(d(U(y),y)\leq \frac{1}{m+1} \to \lambda < 
\theta(y) + \frac{1}{k+1}\right).$$ 
By $\BC$, $$\exists l\in \N \forall y\in X \exists m \leq l \left(d(U(y),y)\leq \frac{1}{m+1} 
\to \lambda < \theta(y) + \frac{1}{k+1}\right).$$ Take one such $l = l_0$. 
Clearly $$\forall y \in X \left(d(U(y),y) \leq \frac{1}{l_0+1} \to \lambda < \theta(y) + 
\frac{1}{k+1}\right).$$ Since $\lim_n d(U(u_n),u_n) = 0$, the result follows. \end{proof}
\details{We have that 
\beq
\forall y\in F \left( \lambda < \theta(y) + \frac{1}{k+1}\right)
\eeq
iff 
\beq
\forall y\in X \left( y\in F \to \lambda < \theta(y) + \frac{1}{k+1}\right)
\eeq
iff 
\beq
\forall y\in X \left(U(y)=y \to \lambda < \theta(y) + \frac{1}{k+1}\right)
\eeq
iff 
\beq
\forall y\in X \left(\forall m\in \N \left( d(U(y),y)\leq \frac{1}{m+1} \right) \to \lambda < \theta(y) + \frac{1}{k+1}\right)
\eeq
iff 
\beq\forall y \in X  \exists m\in \N \left(d(U(y),y)\leq \frac{1}{m+1} \to \lambda < 
\theta(y) + \frac{1}{k+1}\right).
\eeq
Let $$A(y,m):= d(U(y),y)\leq \frac{1}{m+1} \to \lambda < 
\theta(y) + \frac{1}{k+1}.$$ Then $A$ is a $\Sigma$-formula, since $\leq_\R$ is $\Pi_1^0$ and $<_\R$ is $\Sigma_1^0$.
Then we can apply (bC) (in the form above) to conclude that 
\beq
\exists l\in \N \forall y\in X \exists m\leq l \,\, A(y,m).
\eeq
Take one such $l = l_0$. Then there exists $m\leq l_0$ such that 
\beq
\forall y\in X \left(d(U(y),y)\leq \frac{1}{m+1} \to \lambda < 
\theta(y) + \frac{1}{k+1}\right).
\eeq
Since $\frac{1}{l_0+1}\leq \frac{1}{m+1}$, we get that 
\beq
\forall y\in X \left(d(U(y),y)\leq \frac{1}{l_0+1} \to \lambda < 
\theta(y) + \frac{1}{k+1}\right).
\eeq
Since $\lim_n d(U(u_n),u_n) = 0$, there exists $N\in\N$ such that, for all $n\geq N$, $d(U(u_n),u_n)\leq \frac{1}{l_0+1}$. 
It follows that for all $n\geq N$,
\beq \lambda < \theta(u_n) + \frac{1}{k+1}.
\eeq
}

In proof mining terms, the crux of the matter of the proposition above reduces to the 
following triviality: $$d(U(u_n),u_n) \leq \frac{1}{r+1} \wedge \forall y\in X \left( d(U(y),y) \leq \frac{1}{r+1} 
\to \lambda < \theta(y) + \frac{1}{k+1}\right) \, \to \,\, \lambda < \theta(u_n) + \frac{1}{k+1}.$$

For the applications in this paper, we need an immediate corollary of the above proposition.

\begin{proposition}[General principle]\label{gp.prop_3} The theory $\Ms^+$ proves the following 
mathematical statement. Let $U$ be a map from $X$ to $X$, $\varphi$ a map from $X \times X$ to 
$\R$, and $(u_n)_{n\in\N}$ be a sequence of elements of $X$ such that $\lim_n d(U(u_n),u_n) = 0$. 
Consider $F:=Fix(U)$. If $$\forall k\in \N \, \exists x \in F \, \forall y\in F 
\, \left(\varphi(x,x) < \varphi(x,y) + \frac{1}{k+1}\right)$$ then $$\forall k\in \N \, 
\exists x \in F \, \exists n\in \N \, \forall m\geq n \left( \varphi(x,x) < \varphi(x,u_m) 
+ \frac{1}{k+1}\right).$$
\end{proposition}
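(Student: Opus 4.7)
The plan is to derive this from Proposition~\ref{gp.prop_2} by instantiating the abstract parameters $\lambda$ and $\theta$ appropriately. Fix an arbitrary $k \in \N$. By the hypothesis of the general principle, we can select some $x \in F$ (depending on $k$) such that
\[
\forall y \in F \left(\varphi(x,x) < \varphi(x,y) + \frac{1}{k+1}\right).
\]

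Now I would apply Proposition~\ref{gp.prop_2} with $\lambda := \varphi(x,x) \in \R$ and $\theta : X \to \R$ defined by $\theta(y) := \varphi(x,y)$ (with $x$ held fixed as just chosen). The displayed line above is exactly hypothesis \eqref{gen-principle-hyp-1} for these choices, and the ambient assumption $\lim_n d(U(u_n),u_n) = 0$ is common to both statements. Therefore Proposition~\ref{gp.prop_2} delivers some $n \in \N$ such that for all $m \geq n$,
\[
\varphi(x,x) = \lambda < \theta(u_m) + \frac{1}{k+1} = \varphi(x,u_m) + \frac{1}{k+1}.
\]
This provides the inner existential witness (namely $x \in F$ together with $n \in \N$) for the given $k$, and since $k$ was arbitrary the desired conclusion follows.

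There is essentially no obstacle: the whole content is packaged in Proposition~\ref{gp.prop_2}, and the present statement is just its uniformization in a parameter $x$ together with a recasting of ``for $n$ sufficiently large'' as $\exists n \forall m \geq n$. The only thing one has to be mildly careful about is that the choice of $x$ depends on $k$, so the quantifier structure in the conclusion is $\forall k \exists x \exists n \forall m \geq n$, matching exactly what is claimed. No additional use of $\bAC$ or $\BC$ beyond what is already consumed inside Proposition~\ref{gp.prop_2} is needed for this corollary.
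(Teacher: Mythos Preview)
Your proof is correct and follows essentially the same approach as the paper: fix $k$, take $\tilde{x}\in F$ from the hypothesis, set $\lambda := \varphi(\tilde{x},\tilde{x})$ and $\theta(y) := \varphi(\tilde{x},y)$, and invoke Proposition~\ref{gp.prop_2}. Your additional remarks about quantifier dependence and the lack of further use of $\bAC$/$\BC$ are accurate and simply make explicit what the paper leaves implicit.
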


\begin{proof} Given $k \in \N$ take, by hypothesis, $\tilde{x}\in F$ such that 
$\forall y\in F \, (\varphi(\tilde{x},\tilde{x}) < \varphi(\tilde{x},y) + \frac{1}{k+1})$. 
Let $\lambda := \varphi(\tilde{x},\tilde{x})$ and $\theta(y) := \varphi(\tilde{x},y)$. 
Now apply Proposition \ref{gp.prop_2}. 
\end{proof}
We will apply the above proposition to the bounded metric space $C$,
where the metric is induced by the norm. In the case of Browder's theorem, $\varphi(x,y)$ is 
the map $(x,y) \mapsto \langle x-v_0, y\rangle$. 
The next result is a quantitative (mining) version of Proposition \ref{gp.prop_3}. To state 
the result (and other results of this paper), we need the notion of {\em monotone functional}. 
Given $f,g \in \N^\N$, we say that $g \leq^* \!f$ if 
$$\forall n \forall k\leq n \Big(g(k)\leq f(n) \wedge f(k)\leq f(n)\Big).$$ 
Given $\alpha, \beta$ functionals from $\N \times \N^\N$ to $\N$, we say that $\alpha \leq^*\!\beta$ if 
$$\forall n \forall f \forall k\leq n \forall g\leq^*\!\!f \, \Big(\alpha(k,g) \leq \beta(n,f) \wedge \beta(k,g) 
\leq \beta(n,f)\Big).$$ 
This is a particular case of the notion of strong majorizability introduced in \cite{Bezem(85)}. We say that 
$f\in \N^\N$ is monotone if $f\leq^*\!\!f$. Note that this notion coincides with the usual 
notion of monotonicity of a numerical function. We will often need to quantify over monotone functions. 
Instead of writing $\forall f (f \leq^*\!\!f \to \ldots)$, we write $\foralltilde f \, (\ldots)$. Finally, 
a functional $\alpha$ from $\N \times \N^\N$ to $\N$ is monotone if $\alpha \leq^*\! \alpha$. 
(Note that monotone functionals are strongly majorizable in the sense of \cite{Bezem(85)}.)

\begin{proposition}[Quantitative version of the general principle]\label{qv:prop1}
Let $(X,d)$ be a metric space. Let $U$ be a map from $X$ to $X$, $\varphi$ a map from $X \times X$ to $\R$ 
and $(u_n)_{n\in\N}$ be a sequence of elements of $X$. Suppose that there are monotone functionals 
$\alpha$ and $\beta$ from $\N \times \N^\N$ to $\N$ satisfying:
\bi
\item[\rm{(}a\rm{)}]
$\forall k\in \mathbb{N}\, \tilde{\forall} f\in \N^\N\, 
\exists N\leq \alpha(k,f) \, \forall n \in [N,f(N)]\,\left( d(U(u_n),u_n) < \dfrac{1}{k+1}\right)$;
\item[\rm{(}b\rm{)}]
$\forall k\in \mathbb{N}\, \tilde{\forall} f \in \N^\N\, \exists N\leq \beta(k,f)$\\
$\exists x\in X\left( d(U(x),x)< \dfrac{1}{f(N)+1} \,\land\, \forall y \in X\, 
\left( d(U(y),y)\leq \dfrac{1}{N+1} \rightarrow \varphi(x,x) < \varphi(x,y) + \dfrac{1}{k+1}\right)\right)$.
\ei
Then, for every $k \in \N$ and any monotone function $f\in \N^\N$, there is a natural number 
	$N$ with $N\leq \psi(k,f)$ such that
\beq\label{qv:eq1}
\exists x \in X \left( d(U(x),x) < \dfrac{1}{f(N)+1} \, \land \, \forall n\in [N, f(N)] \, 
\left( \varphi(x,x) < \varphi(x,u_n) + \frac{1}{k+1}\right)\right),
\eeq	
where $\psi(k,f)$ is the monotone functional given by $\psi(k,f) := \alpha\left(\beta\left(k,\fseco{f}\right), f\right)$, 
with $\fseco{f}$ the monotone function $m \mapsto f(\alpha(m,f))$.  
\end{proposition}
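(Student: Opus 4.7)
The plan is to combine the two given hypotheses by a suitable ``plug-in'' of functions: feed hypothesis (b) with a modified monotone function $\fseco{f}$ that encodes a future application of the metastability rate $\alpha$, then use (a) to locate a region where the sequence $(u_n)$ is close enough to being fixed by $U$ to activate the implication provided by (b).

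Concretely, given $k \in \N$ and a monotone $f \in \N^\N$, I would proceed in three steps. First, I would verify that $\fseco{f}\colon m \mapsto f(\alpha(m,f))$ is monotone (this uses monotonicity of both $\alpha$ in its first argument and $f$). Second, I would apply hypothesis (b) to the pair $(k,\fseco{f})$ to obtain some $N_0 \leq \beta(k,\fseco{f})$ and some $x \in X$ such that
\[
d(U(x),x) < \frac{1}{\fseco{f}(N_0)+1} \quad\text{and}\quad \forall y\in X\left( d(U(y),y) \leq \tfrac{1}{N_0+1} \to \varphi(x,x) < \varphi(x,y) + \tfrac{1}{k+1}\right).
\]
Third, I would apply hypothesis (a) to the pair $(N_0, f)$ to obtain some $N \leq \alpha(N_0,f)$ with $d(U(u_n),u_n) < \tfrac{1}{N_0+1}$ for all $n \in [N, f(N)]$.

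It remains to check that this $N$ and $x$ witness the conclusion \eqref{qv:eq1}. The bound on $N$ is immediate: by monotonicity of $\beta$ and of $\alpha$ one obtains $N \leq \alpha(N_0,f) \leq \alpha(\beta(k,\fseco{f}),f) = \psi(k,f)$. For the first conjunct of \eqref{qv:eq1}, since $N \leq \alpha(N_0,f)$ and $f$ is monotone, $f(N) \leq f(\alpha(N_0,f)) = \fseco{f}(N_0)$, whence $d(U(x),x) < \tfrac{1}{\fseco{f}(N_0)+1} \leq \tfrac{1}{f(N)+1}$. For the second conjunct, each $n \in [N,f(N)]$ provided by step three satisfies $d(U(u_n),u_n) \leq \tfrac{1}{N_0+1}$, so instantiating the universal statement from step two with $y := u_n$ yields $\varphi(x,x) < \varphi(x,u_n) + \tfrac{1}{k+1}$.

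There is no real obstacle here beyond bookkeeping: the only point requiring care is the direction of the inequality in the first conjunct, which is why the function $\fseco{f}$ had to be chosen precisely so that $\fseco{f}(N_0) \geq f(N)$ whenever $N \leq \alpha(N_0,f)$. This forced composition of $f$ with $\alpha(\cdot,f)$ (and the need that the result still be monotone, so that (b) can be legitimately applied to it) is the reason why the final bound takes the nested form $\psi(k,f) = \alpha(\beta(k,\fseco{f}),f)$ rather than a simpler product of the two rates.
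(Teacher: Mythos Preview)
Your proof is correct and follows essentially the same approach as the paper: apply (b) to $(k,\fseco{f})$ to obtain $N_0$ and $x$, then apply (a) to $(N_0,f)$ to obtain $N$, and verify the conclusion using the monotonicity of $\alpha$ and $f$. Your explicit verification that $\fseco{f}$ is monotone is a welcome addition that the paper leaves implicit.
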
 
\begin{proof} Take $k \in \mathbb{N}$ and a monotone function $f\in \N^\N$. By $(b)$, applied to 
$k$ and $\fseco{f}$ there are $N_1\leq \beta\left(k,\fseco{f}\right)$ and $\tilde{x}\in X$ such that

\begin{equation*}
d(U(\tilde{x}),\tilde{x}) < \dfrac{1}{\fseco{f}(N_1)+1} \, \text{ and}
\end{equation*}

\begin{equation}\label{qv_eq2}
\forall y \in X \left( d(U(y),y) \leq \dfrac{1}{N_1+1} \rightarrow \varphi (\tilde{x}, \tilde{x}) 
< \varphi (\tilde{x},y) + \dfrac{1}{k+1}\right).
\end{equation}

Apply $(a)$ to $N_1$ and $f$ to get $N\leq \alpha(N_1,f)$ satisfying
\begin{equation}\label{qv_eq3}
\forall n \in [N,f(N)] \, \left( d(U(u_n),u_n) < \frac{1}{N_1+1}\right).
\end{equation}

We have $N \leq \alpha(N_1,f) \leq \alpha\left(\beta\left(k,\fseco{f}\right),f\right) = \psi(k,f)$ and, by the monotonicity of $f$,

\begin{equation*}
d(U(\tilde{x}),\tilde{x}) < \dfrac{1}{\fseco{f}(N_1)+1}=\dfrac{1}{f\left(\alpha(N_1,f)\right)+1}\leq \dfrac{1}{f(N)+1}.
\end{equation*}

Also, for $n \in [N,f(N)]$, by \eqref{qv_eq2} and \eqref{qv_eq3}, we have
\begin{equation*}
\varphi(\tilde{x},\tilde{x}) < \varphi (\tilde{x},u_n) + \dfrac{1}{k+1}.
\end{equation*}
\end{proof}
There are several differences between Propositions~\ref{gp.prop_2} and \ref{gp.prop_3}. However, \
the difference worth remarking is that $x$ is a fixed point in the latter proposition. 
This is essential for obtaining the mining of Browder's theorem. The statement with 
an arbitrary $x \in X$ would simplify a bit the bounds of Proposition~\ref{qv:prop1}.

\subsection{The general principle is false}\label{false_principle}

The mathematical statement of Proposition~\ref{gp.prop_2} is false, even when $X$ is a (bounded) 
complete metric space, $U$ is continuous and has fixed points, and $\theta$ is bounded and continuous. 
For a counterexample, take $X$ as the unit ball of the normed space $\ell^1$ (the space 
of real-valued sequences whose series is absolutely convergent). Let $U$ be the shift operation 
$U(x_0,x_1,x_2,\ldots) := (0,x_0,x_1, \ldots)$ and $\theta(x_0,x_1,x_2,\ldots) = - 
\sum_{i=0}^\infty |x_i|$ (i.e., $\theta(x)$ is the symmetric of the norm of $x$). Note that 
the only fixed point of $U$ is the zero vector. Let $u_n$ be the vector 
$(\frac{1}{n+1},\ldots,\frac{1}{n+1},0,0,\ldots)$, where there are $n+1$ nonzero entries. 
Clearly, $\theta(u_n) = -1$ and $\|U(u_n) - u_n\| = \frac{2}{n+1}$. The mathematical statement 
of 
Proposition~\ref{gp.prop_2} fails in this case for $\lambda := 0$ (for any natural number $k$).

The general principle also fails in this setting. Just consider $\varphi(x,y) := -\|y\|$.

\details{Using the notations of Proposition \ref{gp.prop_2}, we have that $\lim_n \|U(u_n) - u_n\|=0$. Furthermore,
\[\forall y\in F \left( \lambda < \theta(y) + \frac{1}{k+1}\right) \Lra 0<\theta(0)+ \frac{1}{k+1}\Lra 0 < 0+\frac{1}{k+1}\]
is true, while
\[\lambda < \theta(u_n) + \frac{1}{k+1} \Lra 0<-1+\frac{1}{k+1}\Lra 1<\frac{1}{k+1}\]
is obviously false. Thus, Proposition \ref{gp.prop_2} is false. 

Let us look at the general principle. We have that 
$$\forall k\in \N \, \exists x \in F \, \forall y\in F 
\, \left(\varphi(x,x) < \varphi(x,y) + \frac{1}{k+1}\right)$$ 
iff
$$ \forall k\in \N \, 
\, \left(\varphi(0,0) < \varphi(0,0) + \frac{1}{k+1}\right)$$
is true, while
$$\forall k\in \N \, 
\exists x \in F \, \exists n\in \N \, \forall i\geq n \left( \varphi(x,x) < \varphi(x,u_i) 
+ \frac{1}{k+1}\right)$$
iff 
$$\forall k\in \N \, \exists n\in \N \, \forall i\geq n \left( \varphi(0,0) < \varphi(0,u_i) 
+ \frac{1}{k+1}\right)$$
iff 
$$\forall k\in \N \, \exists n\in \N \, \forall i\geq n \left( 0 < -\|u_i| 
+ \frac{1}{k+1}\right)$$
iff 
$$\forall k\in \N \, \exists n\in \N \, \forall i\geq n \left( 0 < -1
+ \frac{1}{k+1}\right),$$
which is obviously false.
}

\subsection{The explanation of the bounded functional interpretation}\label{bfi_explanation}

The reader may have noticed that we did not require in Proposition~\ref{qv:prop1} the boundedness 
of the metric space $(X,d)$. As can be seen from the proof, this hypothesis is not necessary. 
Proposition \ref{qv:prop1} is a just a simple mathematical fact. In our applications, however, 
the given monotone functionals $\alpha$ and $\beta$ depend on the bound of the metric space 
(as well as the concluding bounding functional $\psi$). A similar situation also happens 
in the forthcoming Proposition \ref{qv:prop2}.

The full bounded functional interpretation can explain the form (given in Proposition~\ref{qv:prop1}) 
taken by the quantitative version of Proposition \ref{gp.prop_3} (provided that the space is bounded). 
Let us try to explain this (in the explanation, we must use some facts that can be found in 
\cite{FerreiraOliva(05)} and \cite{Engracia(09)}). In order to simplify the exposition, we 
assume that the function $\varphi$ is bounded (this is actually automatic in the bounded 
functional interpretation; naturally, the bounds obtained by the mining may depend on the 
bound of $\varphi$). 

The mathematical statement of Proposition \ref{gp.prop_3} is of the following sort: 
$$\forall U^{X\to X} \forall \varphi^{X\to (X \to 1)} \forall u^{0\to X} \, (\textrm{Hyp}_1 
\wedge \textrm{Hyp}_2 \to \text{\rm Con}),$$ 
where (Hyp$_1$) is $\lim_n d(U(u_n),u_n) = 0$, 
(Hyp$_2$) is the other assumption and (Con) is the conclusion. Using $\BC$ and $\bAC$ and 
ignoring the parameters $U$, $\varphi$ and $u$ for the moment, we can put the three formulas 
(Hyp$_1$), (Hyp$_2$) and (Con) in quantitative form. Therefore, the mathematical statement of Proposition~\ref{qv:prop1} 
takes the form $$\forall r,g\exists m \, A(r,\tilde{g},m) \wedge \forall k,f \exists n \, 
B(k,\tilde{f},n) \to \forall s,h \exists q \, C(s,\tilde{h},q),$$ where $C$ is a $\Sigma$-formula, 
and $A$ and $B$ are (for technical reasons) bounded formulas. In its fullest generality, 
the bounded choice principle applies to all finite types. In this case, we get 
$$\tilde{\exists} \alpha \forall r,g \exists m\leq \alpha(r,\tilde{g}) A(r,\tilde{g},m) \wedge 
\tilde{\exists} \beta \forall k,f \exists n \leq \beta(k,\tilde{f}) B(k,\tilde{f},n) \to 
\forall s,h \exists q \, C(s,\tilde{h},q).$$ 
Here $\alpha$ and $\beta$ are of type $0 \to (1 \to 0)$ and they are monotone in the intensional 
sense (this is what the tildes above the quantifiers mean). 
The reader can, however, ignore these (and similar) fine points and still get the gist of the explanation. 
Therefore, we have 
$$\forall s, h \, \tilde{\forall} \alpha, \beta \, \exists q \, 
\big[\forall r,g \exists m\leq \alpha(r,\tilde{g}) A(r,\tilde{g},m) \wedge \forall k,f 
\exists n \leq \beta(k,\tilde{f}) B(k,\tilde{f},n) \to C(s,\tilde{h},q)\big].$$ 

What about the parameters or, better still, the block of universal quantifiers 
$\forall U^{X\to X} \forall \varphi^{X\to (X \to 1)} \forall u^{0\to X}$? Since $X$ is bounded 
(as well as the map $\varphi$), all these quantifications are classified as bounded quantifications. 
So, displaying all the variables, we have 
$$\forall^{\rm b} U \forall^{\rm b} \varphi \forall^{\rm b} u \, 
\forall s, h\, \tilde{\forall} \alpha, \beta \, \exists q \, \big[\forall r,g \exists m\leq \alpha(r,\tilde{g}) 
A(r,\tilde{g},m,U,\varphi,u) \wedge \forall k,f \exists n \leq \beta(k,\tilde{f}) B(k,\tilde{f},n,U,\varphi,u) $$ 
$$\to C(s,\tilde{h},q,U,\varphi,u)\big],$$ 
where the quantifiers $\forall^b$ denote bounded quantifications. The formula between square parentheses is a $\Sigma$-formula in an appropriate sense (this is the reason 
why we required that the formulas $A$ and $B$ be bounded). In its fullest generality, 
the bounded collection principle applies to all finite types. In our case, we get 
$$\forall s, h \tilde{\forall} \alpha, \beta \, \exists l \Big[\,\forall^{\rm b} U 
\forall^{\rm b} \varphi \forall^{\rm b} u \,\exists q \leq l  \big(\forall r,g 
\exists m\leq \alpha(r,\tilde{g}) A(r,\tilde{g},m,U,\varphi,u) \wedge \forall k,f 
\exists n \leq \beta(k,\tilde{f}) B(k,\tilde{f},n,U,\varphi,u) $$ $$\to C(s,\tilde{h},q,U,\varphi,u)\big)\Big].$$ 
Once again, the formula between square parentheses is a $\Sigma$-formula in an appropriate sense.  
The full metatheorem guarantees the existence of a closed term $t$ in G\"odel's {\sf T} such 
that 
$$\forall s, h \tilde{\forall} \alpha, \beta \, \exists l \leq t(s,h,\alpha,\beta) \,
\forall^{\rm b} U \forall^{\rm b} \varphi \forall^{\rm b} u \,\exists q \leq l  
\big[\big(\forall r,g \exists m\leq \alpha(r,\tilde{g}) A(r,\tilde{g},m,U,\varphi,u) \wedge$$ 
$$\forall k,f \exists n \leq \beta(k,\tilde{f}) B(k,\tilde{f},n,U,\varphi,u)\big) \to C(s,\tilde{h},q,U,\varphi,u)\big].$$ 
Therefore, for any $U$, $\varphi$ and $u$, one has 
$$\forall s, h \tilde{\forall} \alpha, \beta \exists q \leq t(s,h,\alpha,\beta) 
\big[\forall r,g \exists m\leq \alpha(r,\tilde{g}) A(r,\tilde{g},m,U,\varphi,u) \wedge 
\forall k,f \exists n \leq \beta(k,\tilde{f}) B(k,\tilde{f},n,U,\varphi,u)$$ 
$$\to C(s,\tilde{h},q,U,\varphi,u)\big].$$ 
Notice that the bound given by $t$ does not depend on 
$U$, $\varphi$ or $u$ (uniformity of the bound). Of course, this is what happens in Proposition~\ref{qv:prop1}, 
where the bound $\psi$ only depends on $k,f$, $\alpha$ and $\beta$.

The form of the quantitative version is almost fully explained. For the full explanation, one must first notice that (a) of Proposition~\ref{qv:prop1} comes from the metastable  version of (Hyp$_1$). The reader may worry that the matrix of this formula is a  $\Sigma$-formula, not a bounded formula (as was required, for technical reasons).  What happens is that in the proper setting of the bounded functional interpretation the matrix would indeed be a bounded formula, were we allowed to use intensional majorizability relations. A similar situation also occurs in the forthcoming analysis of (Hyp$_2$). 

Secondly, the conclusion of Proposition~\ref{qv:prop1} comes from working out a quantitative form of 
(Con) as in the case of the projection statement (\ref{near_closest}), done at the beginning 
of Section~\ref{projection}. 

Finally, it remains to work out a quantitative form of (Hyp$_2$). Well, (Hyp$_2$) is 
$$\forall k \exists n \exists x\in X \left(\forall r \left( d(U(x),x) \leq 
\frac{1}{r+1}\right) \wedge \forall i\geq n \left(\varphi(x,x) \leq \varphi(x,u_i) + 
\frac{1}{k+1}\right) \right).$$ This is equivalent to $$\forall k \exists n \exists x\in X \forall r 
\left(d(U(x),x) \leq \frac{1}{r+1} \wedge \left(r\geq n \to \varphi(x,x) \leq \varphi(x,u_r) + 
\frac{1}{k+1}\right)\right). $$ 

Using (the contrapositive of) $\BC$, we get 
$$\forall k \exists n \forall r \exists x\in X \left(d(U(x),x) \leq \frac{1}{r+1} \wedge \forall i\in [n,r] 
\left(\varphi(x,x) \leq \varphi(x,u_i) + \frac{1}{k+1}\right) \right).$$ 
By (the contrapositive of) $\bAC$, 
we get $$\forall k \forall f \exists n \, \exists x\in X\left( d(U(x),x) \leq \frac{1}{\tilde{f}(n)+1} \wedge 
\forall i \in [n,\tilde{f}(n)] \left( \varphi(x,x) \leq \varphi(x,u_i) + \frac{1}{k+1}\right)\right).$$ 
By fiddling with $k$ and $f$, we get $$\forall k \forall f \exists n \left[\exists x\in X\left( d(U(x),x) 
< \frac{1}{\tilde{f}(n)+1} \wedge \forall i \in [n,\tilde{f}(n)] \left( \varphi(x,x) < \varphi(x,u_i) + 
\frac{1}{k+1}\right)\right)\right].$$ 
Note the change to strict inequalities. 
This move turns the formula between square parentheses into a $\Sigma$-formula (up to equivalence). 
This explain the form that (b) takes in Proposition~\ref{gp.prop_3}.

\section{The minings of the theorems of Browder and Wittmann}\label{browder-wittmann}

In this section we shall obtain, using the quantitative general principle, uniform effective versions of the first two theorems: Browder's and Wittman's. By an analysis of the proofs of these results, one can see that they finish with a simple argument that relies on an application of {\em modus ponens} (and the triangle inequality). Let us isolate this argument.

\begin{lemma}\label{modusponens} 
The theory $\Ms$ proves the following mathematical statement. 
Let $U$ be a map from $X$ to $X$, $\varphi$ a map from $X \times X$ to $\R$ and $(u_n)_{n\in\N}$ be a 
sequence of elements of $X$. Consider $F:=Fix(U)$. 
Suppose that 
$$\forall k \in \N \, \exists x \in F \, \exists n \in \N \, \forall m\geq n 
\left( \varphi(x,x) \leq \varphi(x,u_m) + \frac{1}{k+1} \right)$$ 
and that there is a monotone function $\delta: \N \to \N$ such that, for all $k\in \N$ and $x\in F$, 
$$\exists n \in \N \, \forall m\geq n \left( \varphi(x,x) \leq \varphi(x,u_m) + \frac{1}{\delta(k)+1} \right) \, 
\to \, \exists M \in \N \,\forall m\geq M \left(d(u_m,x) \leq \frac{1}{k+1}\right).$$ 
Then, $(u_n)$ is a Cauchy sequence. 
\end{lemma}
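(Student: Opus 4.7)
The plan is to prove Cauchyness directly by combining the two hypotheses with the triangle inequality, choosing the input to each hypothesis so that the error bounds halve and add up to $\frac{1}{k+1}$.

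Given $k \in \N$, I would first invoke the second hypothesis with parameter $2k+1$: this gives us an implication that, once its antecedent is met for some $x \in F$, yields an index $M$ such that $d(u_m,x) \leq \frac{1}{2(k+1)}$ for all $m \geq M$. To secure the antecedent, I would then apply the first hypothesis to the natural number $\delta(2k+1)$. This produces a fixed point $\tilde{x} \in F$ and an index $n$ with
\[
\forall m \geq n \left( \varphi(\tilde{x},\tilde{x}) \leq \varphi(\tilde{x},u_m) + \frac{1}{\delta(2k+1)+1} \right),
\]
which is precisely the antecedent of the implication supplied by the second hypothesis (with $x := \tilde{x}$). Hence we obtain an $M \in \N$ such that $d(u_m,\tilde{x}) \leq \frac{1}{2(k+1)}$ for all $m \geq M$.

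It remains to observe that for all $i,j \geq M$ the triangle inequality yields
\[
d(u_i,u_j) \;\leq\; d(u_i,\tilde{x}) + d(\tilde{x},u_j) \;\leq\; \frac{1}{2(k+1)} + \frac{1}{2(k+1)} \;=\; \frac{1}{k+1},
\]
so $(u_n)$ is Cauchy. I do not expect a real obstacle here: the argument is essentially a \emph{modus ponens} chained with the triangle inequality. Note that the monotonicity of $\delta$ plays no role in this purely qualitative statement — it is stated in the hypothesis presumably with an eye toward the forthcoming quantitative mining, where monotonicity of the rate is needed for extracting effective bounds from the composition with rates of metastability of $(\varphi(x,x) - \varphi(x,u_m))$. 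Everything above is easily formalizable in $\Ms$ (no appeal to the characteristic principles $\bAC$ or $\BC$ is needed for this step).
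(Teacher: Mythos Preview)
Your proof is correct and matches the paper's approach exactly: the paper itself merely remarks that the argument ``relies on an application of \emph{modus ponens} (and the triangle inequality)'' and then states ``Clearly, the lemma is correct'' without spelling out the details. Your observation that the monotonicity of $\delta$ is irrelevant for this qualitative step (and is only imposed with the subsequent quantitative mining in mind) is also on point.
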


Clearly, the lemma is correct. Following the explanation of Subsection~\ref{bfi_explanation}, it is not 
difficult to find the form that the quantitative version of the 
above lemma must take under the 
bounded functional interpretation. A quantitative form of the first assumption was worked out at 
the end of Subsection~\ref{bfi_explanation}.  

The second assumption is $$\forall k \forall x\in X\left[\forall r \left(d(U(x),x) \leq \frac{1}{r+1} \right) 
\wedge \exists n \forall i\geq n \left(\varphi(x,x) \leq \varphi(x,u_i) + \frac{1}{\delta(k) + 1}\right) \right.$$ 
$$\left. \to \,  \exists M \forall m\geq M \left(d(u_m, x) \leq \frac{1}{k+1} \right) \right].$$ 
By (the contrapositive of) $\bAC$, the conclusion of the implication
is equivalent to 
$$\forall f \exists M \forall m\in [M,\tilde{f}(M)] \left(d(u_m,x) \leq \frac{1}{k+1} \right).$$ 
Therefore, the second assumption is equivalent to 
$$\forall k,n \forall f \forall x\in X \exists r \exists i\geq n \exists M \left[ d(U(x),x) \leq \frac{1}{r+1} 
\wedge \varphi(x,x) \leq \varphi(x,u_i) + \frac{1}{\delta(k) + 1} \right.$$ 
$$\left. \to \, \forall m \in [M,\tilde{f}(M)] \left(d(u_m, x) < \frac{1}{k+1} \right) \right].$$ 
Notice the change to strict inequality at the end. Since the formula in square brackets is equivalent 
to a $\Sigma$-formula, by $\BC$ we have 
$$\forall k,n \forall f  \exists r, i, M \forall x\in X \exists \check{r}\leq r \exists j\in [n, i] 
\exists \check{M}\leq M \left[ d(U(x),x) \leq \frac{1}{\check{r}+1} \wedge \varphi(x,x) \leq \varphi(x,u_j)
 + \frac{1}{\delta(k) + 1} \right.$$ 
 $$\left. \to\,\forall m \in [\check{M},\tilde{f}(\check{M})] \left(d(u_m, x) < \frac{1}{k+1} \right) \right].$$ 
 Therefore $$\forall k,n \forall f  \exists r, i, M \forall x\in X \left[ d(U(x),x) \leq \frac{1}{r+1} 
 \wedge \forall j \in [n,i]\left(\varphi(x,x) \leq \varphi(x,u_j) + \frac{1}{\delta(k) + 1}\right) \right.$$ 
 $$\left. \to \exists \check{M}\leq M \forall m \in [\check{M},\tilde{f}(\check{M})] \left(d(u_m, x) < \frac{1}{k+1} \right)
  \right].$$

The quantitative version of Lemma~\ref{modusponens} takes, then, the following form:

\begin{proposition}[Quantitative version of \ref{modusponens}]\label{qv:prop2}
Let $(X,d)$ be a metric space. Let $U$ be a map from $X$ to $X$, $\varphi$ a map from 
$X \times X$ to $\R$ and $(u_n)_{n\in\N}$ be a sequence of elements of $X$. Suppose that there are 
monotone functions $\delta, \psi, \gamma, \eta$ and $\sigma$ satisfying:
\begin{enumerate}
\item[\rm{(}i\rm{)}] $\forall k \in \mathbb{N} \,\tilde{\forall} f \in \N^\N\, \exists N\leq \psi(k,f)$\\
$\exists x\in X \left( d(U(x),x) < \dfrac{1}{f(N)+1} \, \land \, \forall n\in [N, f(N)] \, 
\left( \varphi (x,x) < \varphi (x,u_n) + \dfrac{1}{k+1}\right)\right)$\, \text{ and}
\item[\rm{(}ii\rm{)}] $\forall k, n \in \N \, \tilde{\forall} f\in \N^\N \, \forall x \in X $\\ 
$\Big[ d(U(x),x)\leq \dfrac{1}{\gamma(k,n,f)+1} \,\land\, \forall i\in [n,\eta(k,n,f)]\, 
\Big( \varphi (x,x) \leq \varphi (x,u_i) + \dfrac{1}{\delta(k)+1}\Big)$\\
$~\qquad\qquad\rightarrow \exists M\leq \sigma(k,n,f)\, \forall m\in[M,f(M)]\, 
\Big(d(u_m,x) < \dfrac{1}{k+1}\Big)\Big]$.
\end{enumerate}
Then
\begin{equation}
\forall k \in \mathbb{N} \,\tilde{\forall} f \in \N^\N\, \exists M\leq \phi(k,f)\, 
\forall m,n\in[M,f(M)]\, \Big( d(u_m,u_n) < \dfrac{1}{k+1}\Big),
\end{equation}
where $\phi(k,f):=\sigma\Big(2k+1, \psi\big(\delta(2k+1),\fsec{f}\big), f\Big)$ and 
$\fsec{f}(m):=\max\{ \gamma(2k+1,m,f),\, \eta(2k+1,m,f)\}$.
\end{proposition}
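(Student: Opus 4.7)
The plan is to apply hypothesis (i) with shifted parameters, use its output to feed the hypothesis of (ii), then combine via the triangle inequality. Concretely, fix $k \in \N$ and a monotone $f$. First observe that $\fsec{f}(m) = \max\{\gamma(2k+1,m,f),\eta(2k+1,m,f)\}$ is monotone in $m$ because $\gamma$ and $\eta$ are monotone functionals.

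Apply (i) to the pair $(\delta(2k+1), \fsec{f})$. This yields $N \leq \psi(\delta(2k+1), \fsec{f})$ and some $x \in X$ such that
\begin{equation*}
d(U(x),x) < \frac{1}{\fsec{f}(N)+1} \quad\text{and}\quad \forall n \in [N, \fsec{f}(N)]\left(\varphi(x,x) < \varphi(x,u_n) + \frac{1}{\delta(2k+1)+1}\right).
\end{equation*}
Since $\fsec{f}(N) \geq \gamma(2k+1,N,f)$ and $\fsec{f}(N) \geq \eta(2k+1,N,f)$, this gives $d(U(x),x) \leq \frac{1}{\gamma(2k+1,N,f)+1}$ and $\varphi(x,x) \leq \varphi(x,u_i) + \frac{1}{\delta(2k+1)+1}$ for all $i \in [N, \eta(2k+1,N,f)]$, which is exactly the premise of (ii) with inputs $2k+1$, $N$, $f$ and this particular $x$.

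Invoking (ii) then produces $M \leq \sigma(2k+1, N, f)$ such that $d(u_m, x) < \frac{1}{2k+2}$ for all $m \in [M, f(M)]$. By monotonicity of $\sigma$ in its second argument,
\begin{equation*}
M \leq \sigma(2k+1, N, f) \leq \sigma\bigl(2k+1, \psi(\delta(2k+1), \fsec{f}), f\bigr) = \phi(k,f).
\end{equation*}
Finally, for any $m,n \in [M, f(M)]$, the triangle inequality gives
\begin{equation*}
d(u_m, u_n) \leq d(u_m, x) + d(x, u_n) < \frac{1}{2k+2} + \frac{1}{2k+2} = \frac{1}{k+1},
\end{equation*}
as required.

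The argument is essentially bookkeeping; the only subtlety is choosing the auxiliary monotone $\fsec{f}$ so that the single application of (i) simultaneously controls both the $U$-displacement bound (needed to match $\gamma$) and the range of the $\varphi$-inequality (needed to match $\eta$) demanded by (ii). Once the correct $\fsec{f}$ is in hand, the rest follows from the monotonicity conditions and the triangle inequality.
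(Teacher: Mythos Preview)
Your proof is correct and follows essentially the same approach as the paper: apply (i) with parameters $\delta(2k+1)$ and $\fsec{f}$, use the resulting $N$ and $x$ to verify the antecedent of (ii) with inputs $2k+1$, $N$, $f$, then conclude via the triangle inequality and monotonicity of $\sigma$. You even make explicit that $\fsec{f}$ is monotone (needed to invoke (i)), which the paper leaves implicit.
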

\begin{proof}
Let $k\in \mathbb{N}$ and monotone $f\in \N^\N$ be given. We apply condition $(i)$ to 
$\delta(2k+1)$ and $\fsec{f}$ in order to get $N_1\leq \psi(\delta(2k+1),\fsec{f})$ and $\tilde{x}\in X$ such that
\begin{align}
&d(U(\tilde{x}),\tilde{x}) < \dfrac{1}{\fsec{f}(N_1)+1}\, \text{ and}\nonumber\\
&\forall n\in[N_1,\fsec{f}(N_1)]\, \Big( \varphi (\tilde{x},\tilde{x}) < \varphi (\tilde{x},u_n) 
+ \dfrac{1}{\delta(2k+1)+1}\Big).\label{qv:eq5}
\end{align}

Now apply $(ii)$ to $2k+1, N_1, f$ and $\tilde{x}\in X$ and obtain 
\begin{align} d(U(\tilde{x}),\tilde{x})\leq \dfrac{1}{\gamma(2k+1,N_1,f)+1}\, \land\, 
\forall i \in [N_1,\eta(2k+1,N_1,f)]\, \Big( \varphi (\tilde{x},\tilde{x}) \leq 
\varphi (\tilde{x},u_i) + \dfrac{1}{\delta(2k+1)+1}\Big) \nonumber \\ 
\rightarrow \exists M\leq \sigma(2k+1,N_1,f)\,\forall m\in[M,f(M)]\, \Big( d(u_m,\tilde{x})< \dfrac{1}{2k+2}\Big).
\label{qv:eq6}
\end{align}

Since $\gamma(2k+1,N_1,f),\, \eta(2k+1,N_1,f)\leq \fsec{f}(N_1)$, by \eqref{qv:eq5} we have 
the antecedent of \eqref{qv:eq6}. Therefore $$\exists M\leq \sigma(2k+1,N_1,f)\,\forall m\in[M,f(M)]\, 
\left( d(u_m,\tilde{x}) < \dfrac{1}{2k+2}\right)$$

Finally, we have $M\leq \sigma(2k+1,N_1,f)\leq 
\sigma\big(2k+1, \psi\big(\delta(2k+1),\fsec{f}\big),f\big)=\phi(k,f)$ and, 
by the triangle inequality, the result follows. 
\end{proof}

\subsection{The mining of Browder's theorem}

In the following, we are in the hypotheses of Browder's Theorem~\ref{browder}. Thus, $X$ is a real Hilbert space, $C$ is a bounded closed convex subset of $X$, $U:X\to X$ is a nonexpansive mapping that maps $C$ into itself, $v_0\in C$ and the sequence $(u_n)$ is defined as in Theorem~\ref{browder}.

We use the quantitative general principle (Proposition~\ref{qv:prop1}) and Proposition~\ref{qv:prop2} for the bounded metric space $C$ with the metric induced by the Hilbert space norm and for the mapping $\varphi(x,y):=\langle x - v_0,y\rangle$. Let $b\in\N^*$ be an upper bound on the diameter of $C$. Let us define first the following functions:
\beq
\usr:\N\to\N, \quad  \usr(k)=b^4(k+1)^2+b^2.\label{def-usr} 
\eeq 
and, for every $g:\N\to\N$, 
\beq
\usf_g: \N\to \N, \quad  \usf_g(m)= \max\{g(12b(m+1)^2),\, 12b(m+1)^2 \}+1.\label{def-usf}
\eeq

As an immediate consequence of \eqref{quasi_fixed} of Section~\ref{modified}, we get that condition (a) of Proposition~\ref{qv:prop1} is fulfilled with 
$$N:=\alpha(k,f) := b(k+1).$$

\details{By (\ref{quasi_fixed}), we have that for all $n$,\[ \Vert U(u_n) - u_n \Vert \leq \frac{b}{n+1}.\] Let $k\in\N$ and $f\in \N^\N$ monotone. Then for all $n\geq N:=\alpha(k,f)$, we have that \[ d(U(u_n),u_n))\leq \dfrac{b}{n+1}\leq \dfrac{b}{\alpha(k,f)+1}\leq \dfrac{b}{b(k+1)}=\dfrac{1}{k+1}.\] In particular, (a) from Proposition \ref{qv:prop1} holds.}

Furthermore, condition (b) of Proposition~\ref{qv:prop1} is satisfied with $$\beta(k,f):=12b\left(\usf_f^{(\usr(k))}(0)+1\right)^2.$$This was worked out in Proposition~\ref{proj3}.

\details{Clear.}

Therefore we can apply Proposition~\ref{qv:prop1} in order to obtain condition $(i)$ of Proposition~\ref{qv:prop2} with $$\psi(k,f):=12b^2\left(\usf_{\fseco{f}}^{(\usr(k))}(0)+1\right)^2+b,$$ where  $\fseco{f}(m):=f(\alpha(m,f))=f(b(m+1))$. 
\details{
Since the hypotheses of Proposition \ref{qv:prop1} are satisfied, we can apply it to get that 
for every $k \in \N$ and any monotone function $f\in \N^\N$, there exists $N\leq \psi(k,f)$ such that
\begin{equation}\label{qv:eq1}
\exists x \in X \left( d(U(x),x) < \dfrac{1}{f(N)+1} \, \land \, \forall n\in [N, f(N)] \, 
\left( \varphi(x,x) < \varphi(x,u_n) + \frac{1}{k+1}\right)\right),
\end{equation}	
where 
\[\fseco{f}(m)=f(\alpha(m,f))=f(b(m+1)-1)\]
and
\bua \psi(k,f) &=& \alpha\left(\beta(k,\fseco{f}), f\right) = b\left(\beta\left(k,\fseco{f}\right)+1\right)-1=
b\left(12b\left(\usf_{\fseco{f}}^{(\usr(k))}(0)+1\right)^2+1\right)-1\\
&=&
12b^2\left(\usf_{\fseco{f}}^{(\usr(k))}(0)+1\right)^2+b-1.
\eua
}
We now need to show that  hypothesis $(ii)$ of Proposition~\ref{qv:prop2} holds. This follows from the mining of (II) of Section~\ref{modified}. It can be read from Kohlenbach's computations of Lemma 2.11 of  \cite{Kohlenbach(11)} that, for all $x\in C$ and $k,n\in \N$, 

\begin{equation}\label{qlem-Koh}
\|U(x) - x \| \leq \frac{1}{2b(n+1)(k+1)^2+1} \wedge \langle x-v_0,x-u_n\rangle \leq \frac{1}{2(k+1)^2} \,\, \to \,\, \|u_n-x\| <\frac{1}{k+1}.
\end{equation}

Therefore, condition $(ii)$ of Proposition \ref{qv:prop2} holds with 
\bce
$\gamma(k,n,f):=2b(f(n)+1)(k+1)^2$, $\delta(k):=2(k+1)^2-1$, $\eta(k,n,f):=f(n)$ and $M:=\sigma(k,n,f):=n$.
\ece

\details{
Let $k,n\in\N$, $f\in \N^\N$ be monotone and $x\in X$ be such that the premise of (ii) holds with 
$\gamma,\delta, \eta,\sigma$ defined as above. Let $M:=\sigma(k,n,f)=n$. We have to prove that 
for all $m\in[n,f(n)]$, $d(u_m,x) < \dfrac{1}{k+1}$. 

Let $m\in[n,f(n)]$ be arbitrary. Then, we get, by the premises of (ii) that 
\bua 
d(U(x),x)\leq \dfrac{1}{\gamma(k,n,f)+1} &\Lra & \|U(x) - x \| \leq \frac{1}{2b(f(n)+1)(k+1)^2} \\
& \Ra & \|U(x) - x \| \leq \frac{1}{2b(m+1)(k+1)^2} \quad \text{since~} m\leq f(n)\\
\langle x-v_0,x-u_n\rangle &=& \varphi (x,x) - \varphi (x,u_i)\leq \dfrac{1}{\delta(k)+1}=\frac{1}{2(k+1)^2}
\eua
Thus, we can apply \eqref{qlem-Koh} with $x,k,m$ to get that $d(u_m,x) <\dfrac{1}{k+1}$.
}

Finally, the conclusion of Proposition~\ref{qv:prop2} yields:

\begin{theorem}[Quantitative Browder]\label{app:quant-browder}
Under the conditions of Browder's theorem, let $b\in\N^*$ be an upper bound on the diameter of $C$. Then, for all $k\in \N$ and every monotone function $f:\N \to \N$, $$\exists N\leq \phi_b(k,f) \,\forall i,j \in [N,f(N)] \, \left( \|u_i-u_j\|< \dfrac{1}{k+1}\right),$$ where $$\phi_b(k,f):=12b^2 \left(h^{(R)}(0)+1\right)^2+b,$$ with $R:=64b^4(k+1)^4+b^2$ and $h(m):= \max\{ 8b(f(12b^2(m+1)^2 + b)+1)(k+1)^2,12b(m+1)^2\}+1.$
\end{theorem}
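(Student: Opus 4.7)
The plan is to apply Proposition~\ref{qv:prop2} directly, with the monotone functions that have been assembled in the preceding paragraphs of this subsection. We take $(X,d)$ to be $C$ equipped with the metric induced by the Hilbert space norm, $\varphi(x,y) := \langle x - v_0, y\rangle$, and $(u_n)$ the Browder sequence. Hypothesis (i) of Proposition~\ref{qv:prop2} has been established via the quantitative general principle (Proposition~\ref{qv:prop1}) with $\psi(k,f) = 12b^2(\usf_{\fseco{f}}^{(\usr(k))}(0)+1)^2 + b$, where $\fseco{f}(m) = f(b(m+1))$; this used $\alpha(k,f) = b(k+1)$ (justified by \eqref{quasi_fixed}) and $\beta(k,f) = 12b(\usf_f^{(\usr(k))}(0)+1)^2$ (from Proposition~\ref{proj3}). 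Hypothesis (ii) follows from Kohlenbach's estimate~\eqref{qlem-Koh} with $\gamma(k,n,f) = 2b(f(n)+1)(k+1)^2$, $\delta(k) = 2(k+1)^2 - 1$, $\eta(k,n,f) = f(n)$, and $\sigma(k,n,f) = n$.

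With (i) and (ii) in hand, Proposition~\ref{qv:prop2} delivers, for each $k\in\N$ and monotone $f$, some $N \leq \phi(k,f)$ such that $\|u_i - u_j\| < 1/(k+1)$ for all $i,j \in [N, f(N)]$, where $\phi(k,f) = \sigma(2k+1,\, \psi(\delta(2k+1), \fsec{f}),\, f)$ and $\fsec{f}(m) = \max\{\gamma(2k+1,m,f),\, \eta(2k+1,m,f)\}$. Since $\sigma$ is projection to its second argument, this reduces to $\phi(k,f) = \psi(\delta(2k+1), \fsec{f})$. The rest is a bookkeeping calculation: $\delta(2k+1) = 8(k+1)^2 - 1$, whence $\usr(\delta(2k+1)) = 64b^4(k+1)^4 + b^2 = R$; and since $b \geq 1$ the inequality $8b(f(m)+1)(k+1)^2 \geq f(m)$ gives $\fsec{f}(m) = 8b(f(m)+1)(k+1)^2$. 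Expanding $\psi$ and identifying $\usf_g(m)$, with $g(m') = \fsec{f}(b(m'+1))$, as $\max\{\fsec{f}(b(12b(m+1)^2+1)),\, 12b(m+1)^2\}+1 = \max\{8b(f(12b^2(m+1)^2+b)+1)(k+1)^2,\, 12b(m+1)^2\}+1$, one recognizes exactly the function $h$ of the statement, so $\phi(k,f) = 12b^2(h^{(R)}(0)+1)^2 + b = \phi_b(k,f)$.

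There is no conceptual obstacle at this point: the Hilbert space geometry is entirely encapsulated in the derivations of (i) and (ii), and the Heine/Borel replacement of weak compactness has been absorbed into Proposition~\ref{qv:prop1}. The only delicate point is the bookkeeping of nested compositions --- in particular, distinguishing the outer ``$\fseco{(\cdot)}$''-construction that lives inside $\psi$ (and shifts an argument by the factor $b$, producing the $b(12b(m+1)^2+1) = 12b^2(m+1)^2 + b$ that appears in $h$) from the function $\fsec{f}$ produced by Proposition~\ref{qv:prop2}.
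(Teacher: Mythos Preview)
Your proposal is correct and follows exactly the paper's approach: apply Proposition~\ref{qv:prop2} with the data $\alpha,\beta,\psi,\gamma,\delta,\eta,\sigma$ assembled in the subsection, then unwind $\phi(k,f)=\sigma(2k+1,\psi(\delta(2k+1),\fsec{f}),f)=\psi(\delta(2k+1),\fsec{f})$ and carry out the bookkeeping to identify $R$ and $h$. Your computation of $\fsec{f}$, $\delta(2k+1)$, $\usr(\delta(2k+1))$ and the nested $\usf_{\fseco{\fsec{f}}}$ is exactly what the paper sketches in one line, only spelled out more fully.
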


\begin{proof}
Apply Proposition~\ref{qv:prop2} and remark that
\bua
\fsec{f}(m) &=& 8b(f(m)+1)(k+1)^2,\\
\phi_b(k,f) & = & \sigma\left(2k+1, \psi\left(\delta(2k+1),\fsec{f}\right), f\right)=\psi\left(\delta(2k+1),\fsec{f}\right)
= 12b^2\left(\usf_{\fseco{\fsec{f}}}^{(\usr(\delta(2k+1)))}(0)+1\right)^2+b\\
&=& 12b^2\left(h^{(R)}(0)+1\right)^2+b.
\eua
\end{proof}

\details{Since (i) and (ii) from  Proposition \ref{qv:prop2} are satisfied, we can apply it 
to get that for all $k\in \N$ and every monotone function $f:\N \to \N$,
$$\exists N\leq \phi_b(k,f) \,\forall i,j \in [N,f(N)] \, \left( \|u_i-u_j\|< \dfrac{1}{k+1}\right),$$ 
where 
\bua
\phi_b(k,f) & := & \sigma\left(2k+1, \psi\left(\delta(2k+1),\fsec{f}\right), f\right)=
\psi\left(\delta(2k+1),\fsec{f}\right)\\
&=& 12b^2\left(\usf_{\fseco{\fsec{f}}}^{(\usr(\delta(2k+1)))}(0)+1\right)^2+b\\
&=& 12b^2\left(h^{(R)}(0)+1\right)^2+b
\eua
since
\bua 
\fsec{f}(m) &=& \max\{ \gamma(2k+1,m,f),\, \eta(2k+1,m,f)\}\\
&=& \max\{2b(f(m)+1)(2k+2)^2-1, f(m)\}=\max\{8b(f(m)+1)(k+1)^2-1, f(m)\}\\
&=& 8b(f(m)+1)(k+1)^2-1\\
\fseco{\fsec{f}}(m) &=& \fsec{f}(b(m+1)-1)=8b(f(b(m+1)-1)+1)(k+1)^2-1\\
\usf_{\fseco{\fsec{f}}}(m)&=&\max\{\fseco{\fsec{f}}(12b(m+1)^2), \, 12b(m+1)^2\}\\
&=& \max\{8b\bigg(f\big(b(12b(m+1)^2+1)-1\big)+1\bigg)(k+1)^2-1,\, 12b(m+1)^2\}\\
&=& \max\{8b(f(12b^2(m+1)^2+b-1)+1)(k+1)^2-1,\, 12b(m+1)^2\}\\
&=& h(m)\\
\usr(\delta(2k+1)) &=& b^4(\delta(2k+1)+1)^2=b^4(2(2k+2)^2)^2=b^4\big(8(k+1)^2\big)^2\\
&=& 64b^4(k+1)^4 \\
&=& R
\eua
}

\subsection{The mining of Wittmann's theorem}\label{app_wittmann}

In this section we show how, using Propositions~\ref{qv:prop1} and \ref{qv:prop2}, 
we can mine the proof of a special case of the following important result proved by 
Rainer Wittmann in \cite{Wittmann(92)}.

\begin{theorem}[Wittmann]\label{app:wittmann}
Let $X$ be a Hilbert space, $C$ be a nonempty closed convex bounded subset of 
$X$ and  $U:C\to C$ be a nonexpansive mapping. 

Assume that $(\lambda_n)_{n\in \N}$ is a sequence in $(0,1)$  satisfying
\[\ba{lll} 
(\rm{C}1) \quad  \lim \lambda_n=0, & (\rm{C}2) \quad \ds\sum_{n=1}^\infty\lambda_n=\infty, & (\rm{C}3)  
\quad \ds\sum_{n=1}^\infty|\lambda_n-\lambda_{n+1}|<\infty.
\ea\]
Let $u\in C$ and define the sequence $(u_n)_{n\in\N}$ as follows:
\beq\label{def-Halpern}
u_0:=u, \quad u_{n+1}:=\lambda_{n+1}u_0+ (1-\lambda_{n+1})U(u_n). 
\eeq
Then $(u_n)$ converges strongly to a fixed point of $U$ in $C$  {\em (}the closest one to $u${\em )}. 
\end{theorem}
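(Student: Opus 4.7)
My plan is to prove Wittmann's theorem by running the machinery of Propositions~\ref{qv:prop1} and \ref{qv:prop2} exactly as was done for Browder's theorem, with the substitutions $v_0 := u$, $\varphi(x,y) := \langle x - u, y \rangle$, and with $C$ (endowed with the norm-induced metric) playing the role of the bounded metric space. The output will be a rate of metastability for $(u_n)$; this implies Cauchyness, which together with the completeness of $X$ and the closedness of $C$ yields a strong limit $x^\star \in C$; continuity of $U$ and $\|U(u_n) - u_n\| \to 0$ then force $U(x^\star) = x^\star$; and passing to the limit in the projection property provided by Proposition~\ref{proj3} identifies $x^\star$ as the point of $Fix(U)$ closest to $u$.

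To feed the framework, four ingredients are needed. First, condition (a) of Proposition~\ref{qv:prop1}: a rate of asymptotic regularity $\|U(u_n) - u_n\| \to 0$. The classical route derives a recurrence of the form
\[
\|u_{n+1} - u_n\| \leq (1 - \lambda_{n+1})\|u_n - u_{n-1}\| + b\,|\lambda_{n+1} - \lambda_n|,
\]
applies a standard summation lemma using (C2) and (C3) to conclude $\|u_{n+1} - u_n\| \to 0$, and then uses $u_{n+1} - U(u_n) = \lambda_{n+1}(u - U(u_n))$ together with (C1) and the boundedness of $C$ to get $\|U(u_n) - u_n\| \to 0$; this can be made effective in rates for (C1), (C2) and (C3). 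Second, condition (b) of Proposition~\ref{qv:prop1} is already supplied by Proposition~\ref{proj3}, since the projection statement is independent of the particular sequence. Applying Proposition~\ref{qv:prop1} then yields hypothesis (i) of Proposition~\ref{qv:prop2}. Third, to obtain hypothesis (ii) of Proposition~\ref{qv:prop2} — the quantitative ``modus ponens'' passing from smallness of $\langle x-u, x - u_m\rangle$ on a block $[n, f(n)]$ to smallness of $\|u_m - x\|$ on that block — I would iterate the Wittmann-type inequality
\[
\|u_{n+1} - x\|^2 \leq (1 - \lambda_{n+1})\|u_n - x\|^2 + 2\lambda_{n+1}\langle u - x, u_{n+1} - x\rangle,
\]
valid for $x \in Fix(U)$ and carrying an extra additive error proportional to $\|U(x) - x\|$ when $x$ is merely an approximate fixed point. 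A telescoping argument that absorbs the linear factors through (C2) will produce bounds of the shape required by condition (ii) of Proposition~\ref{qv:prop2}. Fourth, Proposition~\ref{qv:prop2} then assembles these pieces into an explicit rate of metastability, and the convergence conclusion follows as described above.

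The main obstacle will be the first ingredient: extracting an effective, quantitatively usable rate of asymptotic regularity that threads together (C1), (C2) and (C3) in a way that remains compatible with the iteration of the Wittmann inequality, which also consumes (C2). A secondary technical burden is that, in order to stay inside the $\Sigma$-format demanded by the bounded functional interpretation, every estimate in Step three must be carried out for \emph{approximate} fixed points, introducing additive error terms that have to be kept small uniformly in the block $[n, f(n)]$. Once these two sources of bookkeeping are controlled, the final synthesis through Propositions~\ref{qv:prop1} and \ref{qv:prop2} is mechanical and parallels the mining of Browder's theorem done in the previous subsection.
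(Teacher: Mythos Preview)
Your overall architecture is correct and matches the paper's: verify (a) and (b) of Proposition~\ref{qv:prop1}, deduce (i) of Proposition~\ref{qv:prop2}, verify (ii) via a Wittmann-type recursive inequality for $\|u_{r+1}-x\|^2$, and assemble a rate of metastability. The difference lies in your choice of $\varphi$. You take $\varphi(x,y)=\langle x-u,\,y\rangle$, exactly as in the Browder mining, which has the pleasant consequence that Proposition~\ref{proj3} gives condition (b) verbatim. The paper instead sets $\varphi(x,y)=\langle x-u_0,\,U(y)\rangle$; this forces an extra adaptation of the projection estimate (Proposition~\ref{proj4}), but it aligns directly with the term $\langle x-u_0,\,U(x)-U(u_r)\rangle$ that appears when one expands $\|u_{r+1}-x\|^2$ without the subdifferential trick (see Proposition~\ref{appbau.prop7}, whose $\ell=1$ specialization is exactly the paper's condition (ii) for Wittmann). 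Your route trades the extra projection lemma for an index shift (your inequality controls $\langle x-u,\,x-u_{n+1}\rangle$ rather than a term in $u_n$) and for the need to absorb the approximate-fixed-point error through the subdifferential inequality $\|a+b\|^2\le\|a\|^2+2\langle b,a+b\rangle$ rather than through a direct quadratic expansion. Both routes are sound; yours keeps (b) cheap, the paper's keeps (ii) cheap. Note also that the paper restricts the explicit mining to $\lambda_n=\frac{1}{n+1}$, whereas you sketch the general $(\lambda_n)$ case; your identification of the limit as $P_{Fix(U)}(u)$ at the end is standard but not part of the paper's quantitative treatment.
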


The sequence $(u_n)$ is known as the {\em Halpern iteration}, studied for the first time by Benjamin Halpern in \cite{Halpern(67)} in the particular case $u=0$. One can easily see that $\lambda_n:=\frac1{n+1}$  satisfies conditions $(\rm{C}1)$-$(\rm{C}3)$. Furthermore, if $U$ is linear and $\lambda_n:=\frac1{n+1}$, then  the Halpern iteration becomes the usual ergodic average. Thus,  Wittmann's result is a nonlinear  generalization of the von Neumann mean ergodic theorem and $(u_n)$ can be seen as a nonlinear ergodic average.

In the sequel, for simplicity, we obtain a quantitative version of Wittmann's theorem in the case 
$\lambda_n:=\frac1{n+1}$.

As in the case of Browder's theorem, we work with the bounded metric space $C$ with the metric induced by the Hilbert space norm. Let $b\in\N^*$ be an upper bound on the diameter of $C$. This time, we use the mapping $$\varphi(x,y):=\langle x - u_0,U(y)\rangle.$$ We must adjust Proposition \ref{proj3} to the new $\varphi$. For every $k\in\N$ and every $g:\N\to\N$, let
\beq
\usw_{k,g}:\N\to\N \quad \mbox{with} \quad \usw_{k,g}(m)=\max\{g(m),2b(k+1)\}.\label{def-usw}
\eeq

\begin{proposition}\label{proj4}
For any $k\in \N$ and monotone $f:\N \to \N $, there is $N \in \N$ such that 
$N\leq 12b(\usf_{\usw_{k,f}}^{(\usr(2k+1))}(0)+1)^2$ and 
$$\exists x\in C\, \left( \|U(x)-x\| < \frac{1}{f(N)+1} \wedge \, \forall y\in C \left(\|U(y)-y\|\leq \frac{1}{N+1} 
\to \langle x - u_0, U(x)-U(y)\rangle < \frac{1}{k+1}\right)\right),$$ 
where $\usf_{\usw_{k,f}}$ is defined by \eqref{def-usf} and $r$ is 
defined by \eqref{def-usr}.
\end{proposition}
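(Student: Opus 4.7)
The plan is to obtain Proposition~\ref{proj4} as a direct consequence of Proposition~\ref{proj3} (with $u_0$ playing the role of $v_0$) applied with strengthened parameters, and then to close the small gap between $\langle x-u_0,x-y\rangle$ and $\langle x-u_0,U(x)-U(y)\rangle$ by a short algebraic manipulation. Concretely, I would invoke Proposition~\ref{proj3} with $k$ replaced by $2k+1$ and $f$ replaced by $\usw_{k,f}$; since $f$ is monotone, so is $\usw_{k,f}$, and consequently its monotone envelope coincides with $\usw_{k,f}$ itself. The bound $12b((\check{\usw_{k,f}}+1)^{(R)}(0)+1)^2$ supplied by Proposition~\ref{proj3} then collapses to the announced $12b(\usf_{\usw_{k,f}}^{(\usr(2k+1))}(0)+1)^2$, because $R=b^4((2k+1)+1)^2+b^2=\usr(2k+1)$ and $\check{g}+1=\usf_g$ directly from the definitions.

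This application yields some $x\in C$ with $\|U(x)-x\|<\frac{1}{\usw_{k,f}(N)+1}$ together with the implication $\forall z\in C\,(\|U(z)-z\|\leq\frac{1}{N+1}\to\langle x-u_0,x-z\rangle<\frac{1}{2(k+1)})$. The first conjunct of Proposition~\ref{proj4} is then immediate from $\usw_{k,f}(N)\geq f(N)$. For the second conjunct, given $y\in C$ with $\|U(y)-y\|\leq\frac{1}{N+1}$, I would apply the implication not to $y$ itself but to $z:=U(y)\in C$: nonexpansiveness of $U$ gives $\|U(U(y))-U(y)\|\leq\|U(y)-y\|\leq\frac{1}{N+1}$, so the premise transfers and we obtain $\langle x-u_0,x-U(y)\rangle<\frac{1}{2(k+1)}$. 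Decomposing $\langle x-u_0,U(x)-U(y)\rangle=\langle x-u_0,U(x)-x\rangle+\langle x-u_0,x-U(y)\rangle$ and using Cauchy--Schwarz together with the built-in lower bound $\usw_{k,f}(N)\geq 2b(k+1)$ to bound $\langle x-u_0,U(x)-x\rangle\leq b\|U(x)-x\|<\frac{b}{2b(k+1)+1}<\frac{1}{2(k+1)}$, the two halves sum strictly below $\frac{1}{k+1}$, as required.

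The main obstacle, and the design rationale behind the choice of $\usw_{k,g}$, is precisely the substitution $z:=U(y)$ made above. A naive estimate through $\langle x-u_0,U(x)-U(y)\rangle\leq\langle x-u_0,x-y\rangle+b\|U(x)-x\|+b\|U(y)-y\|$ would leave a residual term $\frac{b}{N+1}$ that cannot be absorbed, because Proposition~\ref{proj3} provides no lower bound whatsoever on the witness $N$. Passing from $y$ to $U(y)$ kills that residual for free by nonexpansiveness, leaving only the $b\|U(x)-x\|$ error on the $x$-side; the lower bound $\usw_{k,f}(N)\geq 2b(k+1)$ is then calibrated exactly so that this remaining error is strictly below $\frac{1}{2(k+1)}$, which matches the other half of the $\frac{1}{k+1}$ budget supplied by applying Proposition~\ref{proj3} with the strengthened parameter $2k+1$. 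Everything else is a routine unpacking of the definitions of $\usw$, $\usf$ and $\usr$.
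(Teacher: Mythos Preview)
Your proposal is correct and follows essentially the same approach as the paper: apply Proposition~\ref{proj3} with parameters $2k+1$ and $\usw_{k,f}$, substitute $z:=U(y)$ (using nonexpansiveness) to obtain $\langle x-u_0,x-U(y)\rangle<\frac{1}{2(k+1)}$, then add the error term $b\|U(x)-x\|<\frac{1}{2(k+1)}$ controlled by the built-in lower bound $\usw_{k,f}(N)\geq 2b(k+1)$. Your additional paragraph explaining why the substitution $z:=U(y)$ is needed (rather than estimating $b\|U(y)-y\|$ directly) is a nice clarification of the design rationale that the paper leaves implicit.
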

\begin{proof} Let $k\in\N$ and $f:\N\to\N$ be monotone. Applying Proposition \ref{proj3} to 
$2k+1$ and to the monotone function $\usw_{k,f}$, we get 
the existence of  $x\in C$  and $N \in \N$, $N\leq 12b(\usf_{\usw_{k,f}}^{(\usr(2k+1))}(0)+1)^2$ such 
that
\begin{equation}
\|U(x)-x\| < \frac{1}{\usw_{k,f}(N)+1}  \, \wedge \, \forall y\in C\, \left(\|U(y)-y\|\leq \frac{1}{N+1} \to 
\langle x-u_0,x-y\rangle < \frac{1}{2(k+1)}\right). 
\label{projW.eq3}
\end{equation}
Since $\usw_{k,f}(N)\geq f(N)$, we have  that 
\beq\label{proj4-l1}
\|U(x) - x\| < \dfrac{1}{f(N)+1}.
\eeq
Let now $y\in C$ be such that $\|U(y)-y\|\leq \frac{1}{N+1}$. As $U$ is nonexpansive, we 
also have that $\|U(U(y)) - U(y)\| \leq \frac{1}{N+1}$. Thus, we can apply \eqref{projW.eq3} to
conclude that 
\begin{equation*}
\langle x-u_0, x-U(y)\rangle < \frac{1}{2(k+1)}.
\end{equation*}
Since 
\bua
\langle x-u_0,U(x)-U(y)\rangle &\leq & \langle x-u_0, U(x)-x\rangle + \langle x-u_0,x-U(y)\rangle \\
&\leq & b\| U(x)-x\| + \frac{1}{2(k+1)} < \frac{b}{\usw_{k,f}(N)+1} + \frac{1}{2(k+1)}
\eua
and $\usw_{k,f}(N)\geq 2b(k+1)$, it follows that 
\beq\label{proj4-l2}
\langle x-u_0,U(x)-U(y)\rangle < \frac{1}{k+1}.
\eeq
By \eqref{proj4-l1} and \eqref{proj4-l2}, the result follows.
 \end{proof}

Hence,  condition (b) of Proposition \ref{qv:prop1} holds with 
\begin{equation*}
\beta(k,f):=12b\left(\usf_{\usw_{k,f}}^{(\usr(2k+1))}(0)+1\right)^2.
\end{equation*}

The bound $\alpha$ for (a) of Proposition~\ref{qv:prop1} was computed in \cite[Lemma 3.1]{Kohlenbach(11)}:
\begin{equation*}
\alpha(k,f):=4b(k+1)\big(4b(k+1)+2\big)=16b^2(k+1)^2+8b(k+1).
\end{equation*}

From the conclusion of Proposition \ref{qv:prop1}, we get that the condition $(i)$ of Proposition \ref{qv:prop2}
holds with 
\bua
\psi(k,f) &=& \alpha\left(\beta\left(k,\fseco{f}\right), f\right) = 16b^2\left(\beta\left(k,\fseco{f}\right)+1\right)^2+
8b\left(\beta\left(k,\fseco{f}\right)+1\right)\\
&=& 16b^2\left(12b\left(\usf_{\usw_{k,\fseco{f}}}^{(\usr(2k+1))}(0)+1\right)^2+1\right)^2+
8b\left(12b\left(\usf_{\usw_{k,\fseco{f}}}^{(\usr(2k+1))}(0)+1\right)^2+1\right),
\eua
where $\fseco{f}(m):=f(\alpha(m,f))=f(16b^2(m+1)^2+8b(m+1))$.

\details{
Since the hypotheses of Proposition \ref{qv:prop1} are satisfied, we can apply it to get that 
for every $k \in \N$ and any monotone function $f\in \N^\N$, there exists $N\leq \psi(k,f)$ such that
\begin{equation}\label{qv:eq1}
\exists x \in X \left( d(U(x),x) < \dfrac{1}{f(N)+1} \, \land \, \forall n\in [N, f(N)] \, 
\left( \varphi(x,x) < \varphi(x,u_n) + \frac{1}{k+1}\right)\right),
\end{equation}	
where 
\[\fseco{f}(m):=f(\alpha(m,f))=f(16b^2(k+1)^2+8b(k+1))\]
and
\bua
\psi(k,f) &=& \alpha\left(\beta\left(k,\fseco{f}\right), f\right) = 16b^2\left(\beta\left(k,\fseco{f}\right)+1\right)^2+
8b\left(\beta\left(k,\fseco{f}\right)+1\right)\\
&=& 16b^2\left(12b\left(\usf_{\usw_{k,\fseco{f}}}^{(\usr(2k+1))}(0)+1\right)^2+1\right)^2+
8b\left(12b\left(\usf_{\usw_{k,\fseco{f}}}^{(\usr(2k+1))}(0)+1\right)^2+1\right)
\eua
More details:
\bua
\usw_{k,\fseco{f}}(m)&=&\max\left\{\fseco{f}(m),2b(k+1)\right\}\\
 \usf_{\usw_{k,\fseco{f}}}(m) &=& \max\left\{\usw_{k,\fseco{f}}(m)(12b(m+1)^2),\, 12b(m+1)^2 \right\}\\
 &=& \max\left\{\fseco{f}(12b(m+1)^2),2b(k+1), 12b(m+1)^2 \right\}
\eua
}

For the condition $(ii)$ of Proposition \ref{qv:prop2}, we rely on the following result, which is an immediate 
consequence of the more general Proposition~\ref{appbau.prop7} (see the immediate paragraph after the proof of this proposition).

\bprop\label{Wit-ii-section-6}
For all $x\in C$ and all $k,n,p\in \N$, 
$$
\ba{c}\ds \|U(x) - x\| \leq \frac{1}{9b(k+1)^2(p+1)} \,\, 
\wedge \,\, \forall i \in [n,p] \left(\langle x-u_0, U(x) - U(x_i)\rangle \leq \frac{1}{12(k+1)^2}\right)\\[1mm]
\ds \to \,\, \forall m \in [\sigma'(k,n),p] \left(\|u_m -x \| < \frac{1}{k+1}\right),\ea
$$
where $\sigma'(k,n) := \exp\left(\tilde{n}+1 + \lceil \ln(3b^2(k+1)^2)\rceil\right)$, with 
$\tilde{n}:= \max\{n,\,6b^2(k+1)^2 \}$.
\eprop

Applying Proposition~\ref{Wit-ii-section-6}  with $x,n,k$ and $p:=f(\sigma'(k,n))$, we get condition $(ii)$ of Proposition 
\ref{qv:prop2} with the following data: 
\bua
\gamma(k,n,f):=9b(k+1)^2(f(\sigma'(k,n))+1)-1, &
\delta(k):=12(k+1)^2-1,\\
\eta(k,n,f):=f(\sigma'(k,n)) \,\,\text{~and} & 
 M:=\sigma(k,n,f):=\sigma'(k,n).
 \eua
\details{
Let $k,n\in\N$, $f\in \N^\N$ be monotone and $x\in X$ be such that the premise of (ii) of Proposition 
\ref{qv:prop2} holds with 
$\gamma,\delta, \eta,\sigma$ defined as above. Let $M:=\sigma(k,n,f)=\sigma'(k,n)$. We have to prove that 
for all $m\in[M,f(M)]$, $d(u_m,x) < \dfrac{1}{k+1}$. 

We have that 
\bua 
d(U(x),x)\leq \dfrac{1}{\gamma(k,n,f)+1} &\Lra & \|U(x) - x \| \leq \frac{1}{9b(k+1)^2(f(\sigma'(k,n))+1))} \\
\eua
and for all $i\in [n, f(\sigma'(k,n))]$,
\bua
\varphi (x,x) - \varphi (x,u_i)\leq \dfrac{1}{\delta(k)+1} &\Lra & 
\langle x - u_0,U(x)-U(u_i)\rangle \leq \dfrac{1}{\delta(k)+1}\\
&\Lra &\langle x - u_0,U(x)-U(u_i)\rangle \leq \dfrac{1}{24(k+1)^2}.
\eua
Thus, the premise of the implication in Proposition~\ref{Wit-ii-section-6} (applied for $p:=f(\sigma'(k,n))$), 
so the conclusion holds too. It follows that 
for all $m \in [\sigma'(k,n),f(\sigma'(k,n))]=[M,f(M)]$, 
\bua
\|u_m-x \| < \frac{1}{k+1}.
\eua 
Thus, Proposition \ref{qv:prop2}.(ii) holds.
}

As in the case of the mining of Browder's theorem, we can apply Proposition~\ref{qv:prop2} 
to get the following result.

\begin{theorem}[Quantitative Wittmann]\label{app:quant-wittmann}
Under the conditions of Wittmann's theorem, let $b\in\N^*$ be an upper bound on the diameter of $C$. 
Then, for all $k\in \N$ and every monotone function $f:\N 	\to \N$, 
$$\exists N\leq \phi_b(k,f) \,\forall i,j \in [N,f(N)] \, \left( \|u_i-u_j\| < \dfrac{1}{k+1}\right),$$ 
where 
$$\phi_b(k,f):=\sigma'\left(2k+1,\psi\left(48(k+1)^2-1,\fsec{f}\right)\right),$$
with $\sigma'$ and $\psi$ defined above and $\fsec{f}(m)=36b(k+1)^2(f(\sigma'(2k+1,m))+1)-1$.
\end{theorem}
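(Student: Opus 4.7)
The plan is to apply the quantitative modus-ponens-type principle (Proposition~\ref{qv:prop2}) in exactly the same style as in the mining of Browder's theorem, plugging in the data that has just been assembled in this subsection. Explicitly, the Halpern iterates $(u_n)$, the nonexpansive map $U$, and the choice $\varphi(x,y) := \langle x - u_0, U(y)\rangle$ form the triple to which Proposition~\ref{qv:prop2} must be applied; conditions (i) and (ii) of that proposition have been verified immediately above, with the explicit monotone functionals $\psi$, $\gamma$, $\delta$, $\eta$ and $\sigma$ listed after Proposition~\ref{Wit-ii-section-6}. So the first step is simply to invoke Proposition~\ref{qv:prop2} and collect the resulting bound $\phi$.

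Second, I would carry out the required substitution $\phi(k,f) = \sigma(2k+1,\ \psi(\delta(2k+1),\fsec{f}),\ f)$. Since $\sigma(k,n,f) = \sigma'(k,n)$ is independent of the second function argument, and since $\delta(2k+1) = 12(2k+2)^2 - 1 = 48(k+1)^2 - 1$, this already simplifies to $\phi(k,f) = \sigma'\bigl(2k+1,\ \psi(48(k+1)^2 - 1,\ \fsec{f})\bigr)$, which matches the stated expression for $\phi_b(k,f)$.

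Third, I would simplify $\fsec{f}(m) = \max\{\gamma(2k+1,m,f),\ \eta(2k+1,m,f)\}$. With the listed values this becomes
\[
\max\bigl\{\,9b(2k+2)^2\bigl(f(\sigma'(2k+1,m))+1\bigr) - 1,\ f(\sigma'(2k+1,m))\,\bigr\},
\]
and since $36b(k+1)^2 \geq 1$, the first term dominates, giving $\fsec{f}(m) = 36b(k+1)^2(f(\sigma'(2k+1,m))+1) - 1$, exactly as in the statement.

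The proof is essentially bookkeeping, so I do not expect a real obstacle here; the only mildly delicate point is making sure that $\fsec{f}$ as defined in Proposition~\ref{qv:prop2} actually reduces to the stated closed form (i.e., that the first argument of the max indeed dominates), and that the monotonicity of all the involved functionals is preserved under the compositions in $\psi$ and $\sigma'$. Both of these are straightforward to check from the definitions of $\usf_g$, $\usw_{k,g}$, $\usr$ and $\sigma'$, so the conclusion follows directly.
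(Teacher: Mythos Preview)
Your proposal is correct and follows essentially the same approach as the paper: invoke Proposition~\ref{qv:prop2} with the data assembled in this subsection, then simplify $\sigma(2k+1,\psi(\delta(2k+1),\fsec{f}),f)$ to $\sigma'(2k+1,\psi(48(k+1)^2-1,\fsec{f}))$ and check that the $\max$ defining $\fsec{f}$ collapses to its first argument. (One minor slip of wording: you write that $\sigma(k,n,f)=\sigma'(k,n)$ is ``independent of the second function argument'' --- you mean the third argument $f$.)
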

\details{The computations should be verified again, as there were changes in \eqref{Wit-ii-section-6}. Since (i) and (ii) from  Proposition \ref{qv:prop2} are satisfied, we can apply it 
to get that for all $k\in \N$ and every monotone function $f:\N \to \N$,
$$\exists N\leq \phi(k,f) \,\forall i,j \in [N,f(N)] \, \left( \|u_i-u_j\|< \dfrac{1}{k+1}\right),$$ 
where 
\bua \tilde{k} &=& \delta(2k+1) =12(2k+2)^2-1=48(k+1)^2-1\\[1mm]
R &=& \usr(2\tilde{k}+1)  = b^4(2\tilde{k}+2)^2=b^4(3\cdot 2^6\cdot(k+1)^2)^2=
3^2\cdot 2^{12}b^4(k+1)^4\\
&=&  36864 b^4(k+1)^4\\
\fsec{f}(m) &=& \max\{ \gamma(2k+1,m,f),\, \eta(2k+1,m,f)\}\\
&=& \max\{9b(2k+2)^2(f(\sigma'(2k+1,m))+1)-1, f(\sigma'(2k+1,m)) \}\\
&=& 36b(k+1)^2(f(\sigma'(2k+1,m))+1)-1\\[1mm]
\fseco{\fsec{f}}(m) &=& \fsec{f}(16b^2(m+1)^2+8b(m+1))\\[1mm]
\usw_{\tilde{k},\fseco{\fsec{f}}}(m) &=& \max\{\fseco{\fsec{f}}(m),2b(\tilde{k}+1)\}\\
&=& \max\{\fsec{f}(16b^2(m+1)^2+8b(m+1)),2b(\tilde{k}+1)\}\\[1mm]
h(m) &=& \usf_{\usw_{\tilde{k},\fseco{\fsec{f}}}}(m) =\max\{\usw_{\tilde{k},\fseco{\fsec{f}}}(m)(12b(m+1)^2), 12b(m+1)^2\}\\
&=& \max\{\fseco{\fsec{f}}(12b(m+1)^2),2b(\tilde{k}+1), 12b(m+1)^2\}\\
&=& \max\{\fsec{f}(16b^2(12b(m+1)^2+1)^2+8b(12b(m+1)^2+1),  2b(\tilde{k}+1), 12b(m+1)^2\},\\
L_{k,f,b}&:=& \psi\big(\tilde{k},\fsec{f}\big)=
16b^2\big(12b(\usf_{\usw_{\tilde{k},\fseco{\fsec{f}}}}^{(\usr(2\tilde{k}+1))}(0)+1)^2+1\big)^2+
8b\big(12b(\usf_{\usw_{\tilde{k},\fseco{\fsec{f}}}}^{(\usr(2\tilde{k}+1))}(0)+1)^2+1\big)\\
&=& 16b^2\big(12b(h^{(R)}(0)+1)^2+1\big)^2+8b\big(12b(h^{(R)}(0)+1)^2+1\big)\\
\widetilde{L_{k,f,b}} &=& \max \{L_{k,f,b},6b^2(k+1)^2\}
\eua
and 
\bua
\phi_b(k,f) & := & \sigma\left(2k+1,\psi\big(\delta(2k+1),\fsec{f}\big), f\right)=
\sigma'\bigg(2k+1,\psi\big(\delta(2k+1),\fsec{f}\big)\bigg)=\sigma'(2k+1,L_{k,f,b})\\
&=& 3b^2(\widetilde{L_{k,f,b}} +1)(2k+2)^2=12b^2(k+1)^2(\widetilde{L_{k,f,b}} +1),
\eua
Since $h(m)\geq 2b(\tilde{k}+1)$ for all $m$, we get that $L_{k,f,b} > 16b^2\big(2b(\tilde{k}+1)\big)^2>6b^2(k+1)^2$, hence
$\widetilde{L_{k,f,b}} =L_{k,f,b}$.
Thus,
\[
\phi_b(k,f)=12b^2(k+1)^2(L_{k,f,b}+1).\]
}

\section{The analysis of Bauschke's theorem}\label{section_6}

In the sequel we give, for the first time, the mining of Bauschke's generalization of 
Wittmann's theorem to a finite family of nonexpansive mappings \cite{Bauschke(96)}. Throughout 
this section, we fix a natural positive number $\ell$. 

Let $(X,d)$ be a bounded metric space and let  $U_0, \ldots, U_{\ell-1}$ be mappings from $X$ to $X$. 
Consider also mappings $\varphi_0, \ldots, \varphi_{\ell-1}$ from  $X \times X$ to $\R$ and 
$(u_n)_{n\in\N}$ a sequence of elements of  $X$ such that $\lim_n d(U_i(u_n),u_n) = 0$, for 
all $i<\ell$.  We denote by  
$$F: =\bigcap_{i=0}^{\ell-1} Fix(U_i)$$  the set of common fixed points of the mappings $U_0, \ldots, U_{\ell-1}$. 

The following result shows that a general principle similar to the one given by Proposition~\ref{gp.prop_3} 
holds  in this setting too. 

\begin{proposition}\label{bau.prop1}
The theory $\Ms^+$ proves the following mathematical statement. Assume that 
\beq\label{bau.prop1-hyp}
\forall k\in \N \,\exists x \in F\, \forall y\in F\, \forall i<\ell \left(\varphi_i(x,x) < \varphi_i(x,y) + 
\frac{1}{k+1}\right).\eeq
Then 
\beq\label{bau.prop1-con}
\forall k\in \N \,\exists x \in F \,\exists n\in \N \,\forall m\geq n \,\forall i<\ell 
\left( \varphi_i(x,x) < \varphi_i(x,u_m) + \frac{1}{k+1}\right).
\eeq
\end{proposition}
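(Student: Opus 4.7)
The plan is to generalize the Heine/Borel compactness argument used in the proof of Proposition \ref{gp.prop_2} (and hence in Proposition \ref{gp.prop_3}) to the finite family $U_0,\ldots,U_{\ell-1}$. The key observation is that, since $\ell$ is a fixed natural number, the bounded numerical quantifications $\forall j<\ell$ and $\forall i<\ell$ do not push formulas outside the class of $\Sigma$-formulas to which $\BC$ applies, so the single-map argument lifts with essentially no change.

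Given $k\in\N$, the first step is to invoke the hypothesis \eqref{bau.prop1-hyp} and fix a witness $\tilde x\in F$ satisfying $\varphi_i(\tilde x,\tilde x)<\varphi_i(\tilde x,y)+\frac{1}{k+1}$ for every $y\in F$ and every $i<\ell$. Next, unfolding $y\in F$ as $\forall j<\ell\,\forall m\,(d(U_j(y),y)\leq\frac{1}{m+1})$, rewriting the resulting implication classically and pulling an existential over $m$ outside the antecedent, one gets
$$\forall y\in X\,\exists m\left[\forall j<\ell\left(d(U_j(y),y)\leq\frac{1}{m+1}\right)\to\forall i<\ell\left(\varphi_i(\tilde x,\tilde x)<\varphi_i(\tilde x,y)+\frac{1}{k+1}\right)\right].$$
The matrix inside the scope of $\exists m$ is equivalent to a $\Sigma$-formula (a $\Pi$-premise negated is $\Sigma$, the consequent is already $\Sigma$, and the outer bounded quantifiers $\forall j<\ell$ and $\forall i<\ell$ preserve this class), so an application of $\BC$ produces a uniform $L\in\N$ such that the implication above holds with $L$ in place of $m$, for all $y\in X$.

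To conclude, I would then use the hypothesis $\lim_n d(U_j(u_n),u_n)=0$ for each $j<\ell$; together with the finiteness of $\ell$ this gives some $n_0$ with $d(U_j(u_m),u_m)\leq\frac{1}{L+1}$ for all $m\geq n_0$ and all $j<\ell$. The uniform consequence of $\BC$ then yields $\forall i<\ell\,(\varphi_i(\tilde x,\tilde x)<\varphi_i(\tilde x,u_m)+\frac{1}{k+1})$ for every $m\geq n_0$, which is precisely \eqref{bau.prop1-con} taking $x:=\tilde x$ and $n:=n_0$. The only delicate step, and the one I expect to be the main (and rather minor) obstacle, is the complexity bookkeeping that certifies the matrix displayed above as a genuine $\Sigma$-formula so that $\BC$ is legitimately applicable; once this is confirmed, the rest is a faithful transcription of the proof of Proposition \ref{gp.prop_2}.
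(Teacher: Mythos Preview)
Your proposal is correct and follows essentially the same approach as the paper: fix $\tilde{x}\in F$ from the hypothesis, unfold $y\in F$, pull the numerical existential through by classical logic, apply $\BC$ (using that the bounded quantifiers $\forall i<\ell$ keep the matrix in the $\Sigma$-class), and finish with the asymptotic regularity of $(u_n)$ together with the fact that $\ell$ is a fixed numeral so the $\ell$ limits combine to a single $n_0$. The paper makes exactly these moves, even explicitly rewriting $\forall i<\ell$ as the finite conjunction $\bigwedge_{i<\ell}$ for the last step; your ``delicate step'' is handled in the paper with the one-line remark that the formula inside the outer parentheses is equivalent to a $\Sigma$-formula.
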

\begin{proof}
Let $k\in \N$ be arbitrary. By \eqref{bau.prop1-hyp}, there exists $\tilde{x}\in F$ such that  $\forall y\in F \forall i<\ell \left(\varphi_i(\tilde{x},\tilde{x}) < \varphi_i(\tilde{x},y) + \frac{1}{k+1}\right)$.\\
By the definition of $F$, we get that
\begin{equation*}
\forall y\in X\, \left(\forall i<\ell \, \forall r\in \N\, \left(d(U_i(y),y)\leq \frac{1}{r+1}\right) \to 
\forall i<\ell \,\left(\varphi_i(\tilde{x},\tilde{x}) < \varphi_i(\tilde{x},y) + \frac{1}{k+1}\right)\right).
\end{equation*}
so, using classical logic,

\begin{equation*}
\forall y\in X\, \exists r\in \N\, \left(\forall i<\ell \,  \left(d(U_i(y),y)\leq \frac{1}{r+1}\right)
\to \forall i<\ell \, \left(\varphi_i(\tilde{x},\tilde{x}) < \varphi_i(\tilde{x},y) + \frac{1}{k+1}\right)\right).
\end{equation*}

Since the formula inside the outer parentheses is equivalent to a $\Sigma$-formula, we can apply $\BC$ to obtain
\begin{equation*}
\exists r\in \N \, \forall y\in X  \left(\forall i<\ell \, \left(d(U_i(y),y)\leq \frac{1}{r+1}\right) \to 
\forall i<\ell \,\left(\varphi_i(\tilde{x},\tilde{x}) < \varphi_i(\tilde{x},y) + \frac{1}{k+1}\right)\right).
\end{equation*}

Take $r_0$ to be one such $r$. Since $\lim_n d(U_i(u_n),u_n) = 0$ for all $i<\ell$, we have that 
\begin{equation*}
\forall i<\ell \,\forall r\in \N \,\exists n\in \N \,\forall m\geq n\, \left(d(U_i(u_m),u_m)\leq \frac{1}{r+1}\right).
\end{equation*}
Note that the bounded quantification ``$\forall i < \ell$'' stands really for a finite conjunction because $\ell$ is a fixed natural number. We really have 
\begin{equation*}
\forall r\in \N \,\bigwedge_{i<\ell} \,\exists n\in \N \,\forall m\geq n\, \left(d(U_i(u_m),u_m)\leq \frac{1}{r+1}\right).
\end{equation*}
Therefore, we easily obtain that
\begin{equation*}
\forall r\in \N \exists n\in \N \,\bigwedge_{i<\ell} \, \forall m\geq n\, \left(d(U_i(u_m),u_m)\leq \frac{1}{r+1}\right).
\end{equation*}
That is,
\begin{equation*}
\forall r\in \N \exists n\in \N \forall m\geq n \forall i<\ell\, \left(d(U_i(u_m),u_m)\leq \frac{1}{r+1}\right).
\end{equation*}
The result now follows by instantiating $r$ by $r_0$. 
\end{proof}

We recapture Proposition~\ref{gp.prop_3} when $\ell = 1$. Thus, Proposition~\ref{bau.prop1} is 
a generalization of Proposition~\ref{gp.prop_3}. Moreover, since the definition of the new set 
$F$ only differs from the original fixed point set by a bounded quantification, it is easy 
to see that we also have a quantitative version of the above proposition. One can argue essentially in the manner of the proof of Proposition~\ref{qv:prop1}.

\begin{proposition}[Quantitative version of \ref{bau.prop1}]\label{bau.prop3}
Suppose that there are monotone functionals $\alpha$ and $\beta$ from $\N \times \N^\N$ to $\N$ satisfying:
\begin{itemize}
\item[\rm{(}a\rm{)}] $\forall k\in \mathbb{N}\, \tilde{\forall} f\in \N^\N\, \exists N\leq \alpha(k,f) \, 
\forall n \in [N,f(N)]\,\forall i<\ell\,\left( d(U_i(u_n),u_n) < \dfrac{1}{k+1}\right)$;
\item[\rm{(}b\rm{)}] $\forall k\in \mathbb{N}\, \tilde{\forall} f \in \N^\N\, \exists N\leq \beta(k,f)\,\exists x\in X
\left(\forall i<\ell \left(d(U_i(x),x)< \dfrac{1}{f(N)+1}\right)\right.$\\
$~\quad\qquad\qquad \wedge\, 
\left.\forall y \in X\, 
\left(\forall i<\ell \left(d(U_i(y),y)\leq \dfrac{1}{N+1}\right) 
\rightarrow 
\forall i<\ell \left(\varphi_i(x,x) < \varphi_i(x,y) + \dfrac{1}{k+1}\right)\right)\right).$
\end{itemize}
Then, for every $k \in \N$ and any monotone function $f\in \N^\N$, there is a natural number 
$N$ with $N\leq \psi(k,f)$ such that
\begin{equation*}
\exists x \in X \left(\forall i<\ell \left(d(U_i(x),x) < \dfrac{1}{f(N)+1}\right) \, \land \, \forall n\in [N, f(N)] \, 
\forall i<\ell 
\left( \varphi_i(x,x) < \varphi_i(x,u_n) + \frac{1}{k+1}\right)\right),
\end{equation*}	
where $\psi(k,f)$ is defined as in Proposition~\ref{qv:prop1}.
\end{proposition}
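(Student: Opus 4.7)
The plan is to essentially mimic the proof of Proposition~\ref{qv:prop1}, exploiting the fact that $\ell$ is a fixed positive natural number so that the bounded quantifications ``$\forall i<\ell$'' appearing in both hypotheses and in the desired conclusion behave uniformly (they are, in effect, finite conjunctions). No new ideas beyond those in Proposition~\ref{qv:prop1} should be required; the only thing to verify is that the same chain of applications of (a) and (b), with the same witnesses and the same bounding functional $\psi$, continues to go through.

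Concretely, I would fix $k\in\N$ and a monotone $f\in\N^\N$, and begin by applying hypothesis (b) to $k$ and to the monotone function $\fseco{f}$ defined by $\fseco{f}(m):=f(\alpha(m,f))$ (this is monotone since $\alpha$ and $f$ are monotone). This produces some $N_1\leq \beta(k,\fseco{f})$ and some $\tilde{x}\in X$ such that $\forall i<\ell\,(d(U_i(\tilde{x}),\tilde{x})<1/(\fseco{f}(N_1)+1))$ and, for every $y\in X$, if $\forall i<\ell\,(d(U_i(y),y)\le 1/(N_1+1))$ then $\forall i<\ell\,(\varphi_i(\tilde{x},\tilde{x})<\varphi_i(\tilde{x},y)+1/(k+1))$. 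Next I would apply hypothesis (a) with $N_1$ in place of $k$, to obtain $N\leq\alpha(N_1,f)$ with $\forall n\in[N,f(N)]\,\forall i<\ell\,(d(U_i(u_n),u_n)<1/(N_1+1))$.

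With $\tilde{x}$ as the witness, I would then verify the two conjuncts of the conclusion. The bound $N\leq\alpha(N_1,f)\leq\alpha(\beta(k,\fseco{f}),f)=\psi(k,f)$ is immediate from the monotonicity of $\alpha$ in its first argument (applied to $N_1\leq\beta(k,\fseco{f})$). For the first conjunct, note $\fseco{f}(N_1)=f(\alpha(N_1,f))\geq f(N)$ by monotonicity of $f$, hence $d(U_i(\tilde{x}),\tilde{x})<1/(f(N)+1)$ for each $i<\ell$. For the second conjunct, given $n\in[N,f(N)]$, step two yields $\forall i<\ell\,(d(U_i(u_n),u_n)\le 1/(N_1+1))$, so the implication established in step one (instantiated with $y:=u_n$) gives $\forall i<\ell\,(\varphi_i(\tilde{x},\tilde{x})<\varphi_i(\tilde{x},u_n)+1/(k+1))$, as required.

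There is no real obstacle here; the verification is a direct bookkeeping exercise tracking the bounds and the uniform role of the conjunction over $i<\ell$. The only point requiring attention is making sure that monotonicity of $\alpha,\beta,f$ is invoked correctly when composing the bounds and when moving from $N_1$ to $N$ under $f$, which is handled exactly as in the proof of Proposition~\ref{qv:prop1}.
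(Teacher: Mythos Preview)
Your proposal is correct and follows exactly the approach the paper indicates: the paper itself does not spell out a proof but simply remarks that ``one can argue essentially in the manner of the proof of Proposition~\ref{qv:prop1}'', and your write-up carries this out faithfully, tracking the additional bounded quantification $\forall i<\ell$ through each step.
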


In analogy with Lemma~\ref{modusponens}, we also have:

\begin{lemma}\label{bau.prop2}
The theory $\Ms$ proves the following mathematical statement. 
Suppose that 
\[
\forall k \in \N \, \exists x \in F \, \exists n \in \N \, \forall m\geq n\, \forall i<\ell 
\left( \varphi_i(x,x) \leq \varphi_i(x,u_m) + \frac{1}{k+1} \right)
\]
and that there is a monotone function $\delta: \N \to \N$ such that, for all $k\in \N$ and $x\in F$, 
\[
\exists n \in \N \, \forall m\geq n \, \forall i<\ell \left( \varphi_i(x,x) \leq \varphi_i(x,u_m) + \frac{1}{\delta(k)+1} \right) \, 
\to \, \exists M \in \N \,\forall m\geq M \left(d(u_m,x) \leq \frac{1}{k+1}\right).
\]
Then, $ (u_n)$ is a Cauchy sequence. 
\end{lemma}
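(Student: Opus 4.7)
The plan is to follow the same two-step strategy that makes Lemma \ref{modusponens} almost immediate, observing that the passage from a single mapping to the finite family $U_0,\ldots,U_{\ell-1}$ does not affect the argument in any essential way. I would prove that, under the hypotheses, the sequence $(u_n)$ actually converges strongly to some common fixed point $\tilde{x} \in F$, from which the Cauchy property follows at once by a triangle inequality.

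More concretely, given $k \in \N$, I would first invoke the first hypothesis with parameter $\delta(2k+1)$ in place of $k$. This produces some $\tilde{x} \in F$ and some $n_0 \in \N$ such that
\[
\forall m \geq n_0\,\forall i<\ell\left(\varphi_i(\tilde{x},\tilde{x}) \leq \varphi_i(\tilde{x},u_m) + \frac{1}{\delta(2k+1)+1}\right).
\]
The premise of the implication in the second hypothesis (instantiated at $2k+1$ and at $\tilde{x} \in F$) is then fulfilled with witness $n_0$, so I obtain $M \in \N$ such that $d(u_m, \tilde{x}) \leq \frac{1}{2k+2}$ for all $m \geq M$. By the triangle inequality, for all $m, n \geq M$,
\[
d(u_m, u_n) \leq d(u_m, \tilde{x}) + d(u_n, \tilde{x}) \leq \frac{2}{2k+2} = \frac{1}{k+1},
\]
which establishes the Cauchy property.

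I do not anticipate any real obstacle here. The only choice that requires a second thought is using $\delta(2k+1)$ rather than $\delta(k)$, which buys the factor of two in the denominator needed by the triangle inequality at the final step. The fact that $\ell$ is a fixed natural number and that $\forall i<\ell$ is a bounded quantification ensures that the common fixed point set $F$ behaves, for the purposes of this argument, exactly as the single fixed point set does in Lemma \ref{modusponens}; no new logical ingredient is required, and the proof is formalizable in $\Ms$ just as before.
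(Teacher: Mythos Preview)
Your argument is correct and matches the paper's approach: the paper gives no explicit proof of this lemma, merely remarking that it is the obvious analogue of Lemma~\ref{modusponens} (for which the paper likewise only says ``Clearly, the lemma is correct''), and your instantiation at $\delta(2k+1)$ followed by \emph{modus ponens} and the triangle inequality is exactly the intended argument, as confirmed by the proof of the quantitative version (Proposition~\ref{bau.prop4}/\ref{qv:prop2}). One small expository point: your opening sentence promises strong convergence to a single $\tilde{x}\in F$, but your actual argument produces a $\tilde{x}$ that depends on $k$; this is harmless for Cauchyness (which is all that is claimed), but the proof as written does not establish convergence to one fixed limit.
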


By the same reasoning of the proof of Proposition~\ref{qv:prop2}, we get the  following 
quantitative version of Lemma~\ref{bau.prop2}:

\begin{proposition}[Quantitative version of \ref{bau.prop2}]\label{bau.prop4}
Suppose that there are 
monotone functions $\delta, \psi, \gamma, \eta$ and $\sigma$ satisfying:
\begin{enumerate}
\item[\rm{(}i\rm{)}] $\forall k \in \mathbb{N} \,\tilde{\forall} f \in \N^\N\, \exists N\leq \psi(k,f)$\\
$\exists x\in X \left(\forall i<\ell\left(d(U_i(x),x) < \dfrac{1}{f(N)+1}\right) \,  \land \, \forall n\in [N, f(N)] \, 
\forall i<\ell\left( \varphi_i (x,x) < \varphi_i (x,u_n) + \dfrac{1}{k+1}\right)\right)$\\ 
\text{ and}
\item[\rm{(}ii\rm{)}] $\forall k, n \in \N \, \tilde{\forall} f\in \N^\N \, \forall x \in X $\\ 
$\left[\forall i<\ell \left( d(U_i(x),x)\leq \dfrac{1}{\gamma(k,n,f)+1}\right)\,\land\, \forall m\in [n,\eta(k,n,f)]\, 
\forall i<\ell\left(\varphi_i(x,x) \leq \varphi_i(x,u_m) + \dfrac{1}{\delta(k)+1}\right)\right.$\\
$~\qquad\qquad\qquad\qquad\rightarrow \left.\exists M\leq \sigma(k,n,f)\, \forall m\in[M,f(M)]\, 
\left(d(u_m,x) < \dfrac{1}{k+1}\right)\right]$.\\
\end{enumerate}
Then
\begin{equation}\label{qv:eq4}
\forall k \in \mathbb{N} \,\tilde{\forall} f \in \N^\N\, \exists M\leq \phi(k,f)\, 
\forall m,n\in[M,f(M)]\, \left(d(u_m,u_n) < \dfrac{1}{k+1}\right),
\end{equation}
where $\phi(k,f)$ is defined as in Proposition~\ref{qv:prop2}.
\end{proposition}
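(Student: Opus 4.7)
The plan is to mimic, essentially verbatim, the proof of Proposition~\ref{qv:prop2}, with the only bookkeeping change being that each occurrence of the hypothesis ``$d(U(x),x) \le \varepsilon$'' is replaced by the bounded conjunction ``$\forall i<\ell\,(d(U_i(x),x) \le \varepsilon)$'' and similarly for the $\varphi_i$. Since $\ell$ is a fixed natural number, these bounded quantifications behave formally like finite conjunctions and play no role in the quantitative estimates; the function $\phi$ therefore has exactly the same definition as in Proposition~\ref{qv:prop2}, namely
\[
\phi(k,f) := \sigma\bigl(2k+1,\, \psi(\delta(2k+1),\fsec{f}),\, f\bigr),\qquad \fsec{f}(m) := \max\{\gamma(2k+1,m,f),\, \eta(2k+1,m,f)\}.
\]

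Concretely, given $k \in \mathbb{N}$ and a monotone $f \in \N^\N$, I would first apply hypothesis $(i)$ to the pair $(\delta(2k+1),\fsec{f})$ to obtain $N_1 \le \psi(\delta(2k+1),\fsec{f})$ together with some $\tilde{x} \in X$ satisfying both
\[
\forall i<\ell\left(d(U_i(\tilde{x}),\tilde{x}) < \tfrac{1}{\fsec{f}(N_1)+1}\right)
\]
and
\[
\forall n \in [N_1,\fsec{f}(N_1)]\,\forall i<\ell\left(\varphi_i(\tilde{x},\tilde{x}) < \varphi_i(\tilde{x},u_n) + \tfrac{1}{\delta(2k+1)+1}\right).
\]
Next, I would instantiate $(ii)$ at $(2k+1, N_1, f, \tilde{x})$. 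Because $\gamma(2k+1,N_1,f)$ and $\eta(2k+1,N_1,f)$ are both dominated by $\fsec{f}(N_1)$, the antecedent of the implication in $(ii)$ is secured by the two displays above (applied uniformly in $i<\ell$). Consequently there is $M \le \sigma(2k+1,N_1,f) \le \phi(k,f)$ with
\[
\forall m \in [M, f(M)]\left(d(u_m,\tilde{x}) < \tfrac{1}{2k+2}\right).
\]
The conclusion \eqref{qv:eq4} then follows from the triangle inequality applied to any two indices $m,n \in [M,f(M)]$.

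I do not anticipate a serious obstacle: the monotonicity of $\psi$, $\sigma$, $\gamma$, $\eta$ (in the sense of $\le^*$) guarantees that the bound $\phi$ is monotone in $(k,f)$ exactly as in Proposition~\ref{qv:prop2}, and the only thing to verify carefully is that the bounded conjunction ``$\forall i < \ell$'' propagates through the two steps without inflating the bounds. The mild subtlety is ensuring that in $(ii)$ we really need the $\varphi_i$-condition only on the subinterval $[N_1,\eta(2k+1,N_1,f)]$ rather than on all of $[N_1,\fsec{f}(N_1)]$, which is precisely why $\fsec{f}$ was defined as the maximum of $\gamma$ and $\eta$; this is identical to the analogous verification in Proposition~\ref{qv:prop2} and requires no new idea.
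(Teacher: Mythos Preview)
Your proposal is correct and is exactly the approach taken in the paper: the paper does not give a separate proof of this proposition but simply remarks that it follows ``by the same reasoning of the proof of Proposition~\ref{qv:prop2},'' which is precisely what you have written out, with the harmless insertion of the bounded quantifier $\forall i<\ell$ throughout.
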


Note that the above quantitative versions (Proposition \ref{bau.prop3} and Proposition \ref{bau.prop4}) 
are also true when the metric space is unbounded. This is similar to the situation discussed 
in Subsection~\ref{bfi_explanation}.

\subsection{Mining Bauschke's theorem}

In the following, $X$ is a Hilbert space, $C$ is a nonempty closed convex  bounded subset of 
$X$, $b\in \N^*$ is an upper bound on the diameter of $C$ and  $T_0,\ldots, T_{\ell-1}$ are 
nonexpansive selfmappings of $C$. Let $F$ be the set of common fixed points of the mappings 
$T_0,\ldots, T_{\ell-1}$. 

For each $n\in \N$, define the mapping
\begin{equation}\label{def-Un}
U_n:=T_{n\, \text{mod }\ell}.
\end{equation}
Obviously, $U_i=T_i$ for all $i <\ell$ and $F=\bigcap_{i=0}^{\ell-1} Fix(U_i)=\bigcap_{n\in\N} Fix(U_n)$. 

Let $(\lambda_n)_{n\in\N}$ be a sequence in $(0,1)$ satisfying the conditions 
\[\ba{lll} 
(\rm{C}1) \quad  \lim \lambda_n=0, & (\rm{C}2) \quad \ds\sum_{n=1}^\infty\lambda_n=\infty, & (\rm{C}3[\ell])  
\quad \ds\sum_{n=1}^\infty|\lambda_n-\lambda_{n+\ell}|<\infty.
\ea\]
Given $u\in C$, we define the sequence $(u_n)_{n\in\N}$  by
\beq\label{def-un-Bauschke}
u_0:=u, \quad u_{n+1}:=\lambda_{n+1}u_0 + (1-\lambda_{n+1})U_{n+1}(u_n).
\eeq

The following theorem was proved by Heinz Bauschke in \cite{Bauschke(96)}.

\bthm[Bauschke]\label{Bauschke-thm}
With the above assumptions, suppose furthermore that
\begin{equation}\label{hyp-Fix-Ui}
F=Fix(T_{\ell-1}\cdots T_1T_0)= Fix(T_0 T_{\ell-1}\cdots T_1)=
\cdots =Fix(T_{\ell-2}\cdots T_0 T_{\ell-1}).
\end{equation}
Then $(u_n)$ converges strongly to a common fixed point of 
$T_0,\,\ldots, \, T_{\ell-1}$ {\em (}the closest one to $u${\em )}.
\ethm
Obviously, for $\ell=1$ one gets Wittmann's theorem. 

We remark first that \eqref{hyp-Fix-Ui} is equivalent to 
\begin{equation}\label{hyp-Fix-Ui1}
F=Fix(U_{m+\ell}\cdots U_{m+1})\quad \text{for all~} m\in\N.
\end{equation}

The left-to-right inclusion is obvious. Therefore, \eqref{hyp-Fix-Ui1} holds if, and only if,

\begin{equation*}
\forall m\in\N\left(F\supseteq Fix(U_{m+\ell}\cdots U_{m+1})\right).
\end{equation*}

In order to find the quantitative version of this statement, we display its logical form. 
The above statement can be rewritten as
 
\begin{equation*}
\forall m\in\N \, \forall x \in C \left(\forall r\in\N \, 
\left(\| x - U_{m+\ell}\cdots U_{m+1}(x)\|\leq \frac1{r+1}\right)\rightarrow \forall i <\ell\, 
\forall k\in\N \, \left(\|x - U_i(x)\| < \frac1{k+1}\right)\right).
\end{equation*}
Since the mappings $U_m$ are defined cyclically, the quantification ``$\forall m \in \N$'' 
above can be seen as bounded. Therefore, we get in our formal setting, by using $\BC$, that 
\begin{equation}\label{bau.eq1}
\!\!\!\!\forall k\in\N\, \exists r\in\N \,\forall m\in\N\, \forall x \in C \left( 
\| x - U_{m+\ell}\cdots U_{m+1}(x)\|\leq \frac1{r+1}\rightarrow  \forall i<\ell 
 \left( \| x - U_i(x) \| < \frac1{k+1}\right)\right).
\end{equation}

Hence, for the quantitative version of \eqref{bau.eq1}, we ask for 
a monotone function $\tau:\N\to\N$ satisfying
\beq\label{hyp-Fix-Ui-quant}
\forall k\in\N\, \forall m\in\N\, \forall x \in C \left( 
\|x - U_{m+\ell}\cdots U_{m+1}(x)\|\leq \frac1{\tau(k)+1}\rightarrow  \forall i<\ell 
 \left(\| x - U_i(x)\| < \frac1{k+1}\right)\right).
\eeq 

The quantitative versions of the conditions $(\rm{C}1)$, $(\rm{C}2)$ and $(\rm{C}3[\ell])$ on the sequence $(\lambda_n)$ 
assume the existence of monotone functions $\mu, \nu, \xi:\N\to \N$  satisfying:
\be
\item $\mu$ is a rate of convergence for $(\lambda_n)$ towards zero, that is 
\[(\rm{C}1_q)\quad \forall k \in \N\, \forall n\geq \mu(k) \left(\lambda_n\leq \frac{1}{k+1}\right);\]
\item $\nu$ is a rate of divergence for $\sum_n \lambda_n$, that is 
\[(\rm{C}2_q)\quad \forall k \in \N \, \left(\sum\limits_{j=0}^{\nu(k)} \lambda_j \geq k\right);\]
\item $\xi$ is a Cauchy modulus for the series $\sum_n |\lambda_n-\lambda_{n+\ell}|$, that is 
\[(\rm{C}3[\ell]_q)\quad \forall k \in \N \, \forall n\in \N^*\, \left( \sum\limits_{j=\xi(k)+1}^{\xi(k)+n}|\lambda_j-
	\lambda_{j+\ell}|\leq \frac{1}{k+1} \right).\]
\ee

Note that $\nu(k) \geq k$. In the sequel, we prove some useful properties of the sequence 
$(u_n)$. First, let us remark that, 
for all $n\in \N^*$ and all $m\in\N$,
\begin{equation}\label{appbau.eq1}
\|u_{n+m+\ell}-u_{n+m}\|\leq b\cdot\sum^{n+m}_{j=n} |\lambda_{j+\ell}-\lambda_j| + 
\|u_{n+\ell-1}-u_{n-1}\|\cdot\prod^{n+m}_{j=n}(1-\lambda_{j+\ell}).
\end{equation}
The proof is an easy induction on $m$ (see the proof of \cite[Theorem~3.1]{Bauschke(96)}).

\begin{lemma}\label{lemma-prop-un}
For each $k\in \N$, the following holds:
\be
\item[\rm{(}i\rm{)}]  $\forall n\geq \mu(b(k+1))\left(\|u_{n+1}-U_{n+1}(u_n)\|\leq \frac{1}{k+1}\right)$
\item[\rm{(}ii\rm{)}]  $\forall n\geq \chi(k) \left(\|u_{n+\ell}-u_n\|\leq \frac{1}{k+1}\right)$, where  
$\chi(k):=\nu(\xi(2b(k+1))+1+\ell + \lceil \ln(2b(k+1))\rceil)$.
\item[\rm{(}iii\rm{)}] $\forall n\geq \widetilde{\alpha}(k) 
\left(\|u_n-U_{n+\ell}\cdots U_{n+1}(u_n)\|\leq \frac{1}{k+1}\right)$, 
where $\widetilde{\alpha}(k):=\max\{\mu(2\ell b(k+1)), \chi(2k+1)\}$. 
\item[\rm{(}iv\rm{)}] $\forall n\geq \widehat{\alpha}(k) \,\forall i< \ell\left(\|u_n-U_i(u_n)\| < \frac{1}{k+1}\right)$,
where $\widehat{\alpha}(k):=\widetilde{\alpha}(\tau(k))$, with $\tau$ satisfying \eqref{hyp-Fix-Ui-quant}.
\ee
\end{lemma}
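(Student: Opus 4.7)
The plan is to prove the four parts in the stated order, with (i) and (ii) supplying the key technical estimates that feed into (iii), which in turn feeds into (iv). For (i), I would unfold the definition \eqref{def-un-Bauschke} to get $u_{n+1}-U_{n+1}(u_n)=\lambda_{n+1}(u_0-U_{n+1}(u_n))$, so that boundedness of $C$ yields $\|u_{n+1}-U_{n+1}(u_n)\|\leq b\lambda_{n+1}$. The rate of convergence $\mu$ applied at $b(k+1)$ gives $\lambda_{n+1}\leq 1/(b(k+1)+1)\leq 1/(b(k+1))$ whenever $n\geq \mu(b(k+1))$, which finishes the job.

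For (ii), I would invoke inequality \eqref{appbau.eq1} with starting index $n_0:=\xi(2b(k+1))+1$ and any target $p\geq \chi(k)$ (setting $m:=p-n_0$, which is $\geq 0$ since $\chi(k)\geq n_0$). The Cauchy-telescope term is bounded by $b\cdot 1/(2b(k+1)+1)\leq 1/(2(k+1))$ using $(\rm{C}3[\ell]_q)$. For the product term, I would apply $1-x\leq e^{-x}$ to get $\prod_{j=n_0}^{p}(1-\lambda_{j+\ell})\leq \exp\!\left(-\sum_{j=n_0+\ell}^{p+\ell}\lambda_j\right)$. Since each $\lambda_j<1$, the partial sum $\sum_{j=0}^{n_0+\ell-1}\lambda_j\leq n_0+\ell$; on the other hand, $(\rm{C}2_q)$ ensures $\sum_{j=0}^{p+\ell}\lambda_j\geq n_0+\ell+\lceil\ln(2b(k+1))\rceil$ as soon as $p+\ell\geq \nu(n_0+\ell+\lceil\ln(2b(k+1))\rceil)$, which is implied by $p\geq \chi(k)$. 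Hence $\sum_{j=n_0+\ell}^{p+\ell}\lambda_j\geq \ln(2b(k+1))$, and the product term is at most $b/(2b(k+1))=1/(2(k+1))$. The two contributions sum to $1/(k+1)$.

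For (iii), I would split by the triangle inequality:
\[
\|u_n-U_{n+\ell}\cdots U_{n+1}(u_n)\|\leq \|u_n-u_{n+\ell}\|+\|u_{n+\ell}-U_{n+\ell}\cdots U_{n+1}(u_n)\|.
\]
The first summand is bounded by $1/(2(k+1))$ using (ii) at level $2k+1$, which requires $n\geq \chi(2k+1)$. For the second summand, I would telescope via
\[
u_{n+\ell}-U_{n+\ell}\cdots U_{n+1}(u_n)=\sum_{j=1}^{\ell}\bigl(U_{n+\ell}\cdots U_{n+j+1}(u_{n+j})-U_{n+\ell}\cdots U_{n+j}(u_{n+j-1})\bigr),
\]
so that nonexpansivity of the $U_i$'s bounds each summand by $\|u_{n+j}-U_{n+j}(u_{n+j-1})\|$. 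Part (i) applied at level $2\ell(k+1)-1$ (needing $n\geq \mu(2\ell b(k+1))$) shows each such discrepancy is $\leq 1/(2\ell(k+1))$, so the total is $\leq 1/(2(k+1))$, yielding (iii).

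For (iv), a routine induction using $u_0\in C$, the fact that each $T_i$ maps $C$ into itself, and convexity of $C$ shows $u_n\in C$ for all $n$. Then (iii) at level $\tau(k)$ gives $\|u_n-U_{n+\ell}\cdots U_{n+1}(u_n)\|\leq 1/(\tau(k)+1)$ for $n\geq \widetilde{\alpha}(\tau(k))=\widehat{\alpha}(k)$, and \eqref{hyp-Fix-Ui-quant} applied with $m:=n$, $x:=u_n$ delivers the conclusion. The main obstacle is the bookkeeping in (ii): one must carefully absorb the $n_0+\ell$ overhead from the crude bound $\sum_{j=0}^{n_0+\ell-1}\lambda_j\leq n_0+\ell$ into the argument of $\nu$, so that the rate of divergence is invoked at exactly $\xi(2b(k+1))+1+\ell+\lceil\ln(2b(k+1))\rceil$, matching the declared $\chi(k)$.
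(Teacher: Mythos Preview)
Your proposal is correct and follows essentially the same route as the paper: part (i) via the defining recursion and the rate $\mu$, part (ii) via \eqref{appbau.eq1} combined with $(\rm{C}3[\ell]_q)$, the inequality $1-x\le e^{-x}$, and the divergence rate $\nu$ absorbing the $n_0+\ell$ overhead exactly as you describe, part (iii) by the same triangle-inequality split and nonexpansive telescoping, and part (iv) by feeding (iii) into \eqref{hyp-Fix-Ui-quant}. The only cosmetic difference is that the paper phrases (ii) with $N:=\xi(2b(k+1))+1$ and $m\ge M:=\chi(k)-N$ rather than your $n_0,p$, but the estimates and the thresholds coincide.
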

\begin{proof}
\be
\item 
Since $u_{n+1}=\lambda_{n+1}u_0+(1-\lambda_{n+1})U_{n+1}(u_n)$, for $n\geq \mu(b(k+1))$ we have
\begin{equation*}
\|u_{n+1}-U_{n+1}(u_n)\|=\lambda_{n+1}\|u_0-U_{n+1}(u_n)\|\leq \lambda_{n+1}b\leq \frac{1}{k+1}.
\end{equation*}
\item 
Let $N:=\xi(2b(k+1))+1$. Applying \eqref{appbau.eq1} with $n:=N$ and using $(\rm{C}3[\ell]_q)$ and the fact that
$1-x\leq \exp(-x)$ for $x\geq 0$, we get that for all $m\in\N$,
\beq\label{appbau.eq2}
\|u_{N+m+\ell}-u_{N+m}\|\leq \frac{1}{2(k+1)} + b\cdot \exp\left(-\sum_{j=N}^{N+m}\lambda_{j+\ell}\right)
\eeq
\details{
\bua
\|u_{N+m+\ell}-u_{N+m}\|&\leq & b\cdot\sum^{N+m}_{j=N} |\lambda_{j+\ell}-\lambda_j| + 
\|u_{N+\ell-1}-u_{N-1}\|\cdot\prod^{N+m}_{j=N}(1-\lambda_{j+\ell})\\
&\leq & b\frac{1}{2b(k+1)}  + 
b\cdot\prod^{N+m}_{j=N}(1-\lambda_{j+\ell})\\
&&\text{applying~} (\rm{C}3[\ell]_q) \text{~with~} k:=2b(k+1), n:=m+1\\
&\leq & \frac{1}{2(k+1)} + b\cdot \exp(-\sum_{j=N}^{N+m}\lambda_{j+\ell}), 
\eua 
since $1-x\leq \exp(-x)$ for $x\geq 0$.
}
Let $M:=\chi(k)-N=\nu(N+\ell + \lceil \ln(2b(k+1))\rceil)-N$. By $(\rm{C}2_q)$, it follows that 
for all $m\geq M$,
$$\sum_{i=0}^{N+m+\ell} \lambda_i \, \geq \, \sum_{i=0}^{N+M} \lambda_i \, \geq \, N+\ell + 
\lceil \ln(2b(k+1))\rceil 
\, \geq \, \sum_{i=0}^{N+\ell -1} \lambda_i + \ln(2b(k+1)).$$ Therefore, 
$\sum\limits_{i=N}^{N+m} \lambda_{i+\ell} = \sum\limits_{i=N+\ell}^{N+m+\ell} \lambda_i \geq \ln(2b(k+1))$, 
which yields
\begin{equation}\label{appbau.eq3}
b\cdot \exp\left(-\sum_{i=N}^{N+m} \lambda_{i+\ell}\right)\leq \frac{1}{2(k+1)}.
\end{equation}

Now, apply \eqref{appbau.eq2} and \eqref{appbau.eq3} to get (ii).

\item Let $n\geq \widetilde{\alpha}(k)$ be arbitrary. For every $1\leq i\leq \ell$, let 
$S_i:=U_{n+i}\cdots U_{n+1}$. We get
\bua
\|u_n-U_{n+\ell}\cdots U_{n+1}(u_n)\| &=& \|u_n-S_\ell(u_n)\| \leq  \|u_n-u_{n+\ell}\| +  \|u_{n+\ell}-S_\ell(u_n)\|\\
&\leq & \frac{1}{2(k+1)} + \|u_{n+\ell}-S_\ell(u_n)\|
\eua
The inequality is explained by (ii), given that $n\geq \chi(2k+1)$. 

Remark that 
\bua
\|u_{n+\ell}-S_\ell(u_n)\| &\leq & \|u_{n+\ell}-U_{n+\ell}(u_{n+\ell-1})\|+
\|U_{n+\ell}(u_{n+\ell-1})-S_\ell(u_n)\|\\
&\leq &  \|u_{n+\ell}-U_{n+\ell}(u_{n+\ell-1})\|+\|u_{n+\ell-1}-S_{\ell-1}(u_n)\|,
\eua
since $U_{n+\ell}$ is nonexpansive.  Reasoning in the same way, it follows that 
\bua
\|u_{n+\ell}-S_\ell(u_n)\| &\leq & \sum_{i=1}^{\ell}\|u_{n+i}-U_{n+i}(u_{n+i-1})\| \leq 
\frac{\ell}{2\ell(k+1)}=\frac1{2(k+1)},\eua
by (i), given that $n\geq \mu(2\ell b(k+1))$.  
\item Just apply (iii) and \eqref{hyp-Fix-Ui-quant}.
\ee 
\end{proof}

We show now that we can apply our quantitative results (Propositions~\ref{bau.prop3} and \ref{bau.prop4}) by 
considering, for every $i=0,\ldots, \ell-1$,
\beq
\varphi_i(x,y):=\langle x-u_0,U_i(y)\rangle.
\eeq

Note that, as an immediate consequence of Lemma~\ref{lemma-prop-un}.(iv), the functional 
\beq 
\alpha:\N\times\N^\N\to \N, \quad \alpha(k,f):=\widehat{\alpha}(k)
\eeq
satisfies condition $(a)$ of Proposition~\ref{bau.prop3}. 

In the sequel, we show how to compute a functional $\beta$ satisfying condition (b) of Proposition~\ref{bau.prop3}.
We consider the projection onto a different set $F$ than the one in Section~\ref{projection}. Since now $F$ is $\{x\in C\,|\, \forall i< \ell\, (U_i(x)=x)\}$, the only difference to  the analysis of the projection argument is in the innocuous addition of the bounded quantification 
``$\forall i<\ell$.'' We get, using similar arguments to the ones used in the proof of Proposition~\ref{proj3}, 
the following result:

\begin{proposition}\label{proj_adapted}
For any $k\in \N$ and monotone $f:\N \to \N $, there exist $N \in \N$ with 
$N\leq 12b(\usf_{\usw_{k,f}}^{(\usr(k))}(0)+1)^2$  and $x\in C$ such that 
and 
$$\forall i<\ell\left(\|U_i(x)-x \| < \frac{1}{f(N)+1}\right) \wedge \, 
\forall y\in C \left(\forall i<\ell\left(\|U_i(y)-y\|\leq \frac{1}{N+1}\right) \to \langle x-u_0,x-y\rangle < 
\frac{1}{k+1}\right),$$ 
where  $\usr$  is defined by \eqref{def-usr} and and $\usf_{(\cdot)}$ is defined by \eqref{def-usf}.
\end{proposition}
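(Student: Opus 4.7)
The plan is to follow the proof of Proposition~\ref{proj3} almost verbatim, replacing the single mapping $U$ by the finite family $U_0,\ldots,U_{\ell-1}$. Since $F=\bigcap_{i<\ell}\mathrm{Fix}(U_i)$ is an intersection of finitely many fixed-point sets and $\ell$ is a fixed natural number, every occurrence of ``$U(z)=z$'' in the single-mapping argument is replaced by the finite conjunction $\forall i<\ell\,(U_i(z)=z)$, and every occurrence of ``$\|U(z)-z\|\le\varepsilon$'' by $\forall i<\ell\,(\|U_i(z)-z\|\le\varepsilon)$. The classes of $\Sigma$-formulas, bounded formulas and so on, are preserved under such finite conjunctions, and all applications of $\bAC$ and $\BC$ in the proof of Proposition~\ref{proj3} still apply.

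First I would establish the multi-mapping analog of Proposition~\ref{proj1}: for any $k$ and monotone $f$, there exist $N\le \tilde f^{(r)}(0)$ (with $r:=b^2(k+1)$) and $x\in C$ with $\forall i<\ell\,(\|U_i(x)-x\|\le \tfrac{1}{\tilde f(N)+1})$ and, for every $y\in C$ satisfying $\forall i<\ell\,(\|U_i(y)-y\|\le\tfrac{1}{N+1})$, we have $\|x-u_0\|^2<\|y-u_0\|^2+\tfrac{1}{k+1}$. The argument is the contradiction construction of Proposition~\ref{proj1}: assume the negation, pick a starting point $x_0\in C$ that is a common approximate fixed point (this is where Lemma~\ref{lemma-prop-un}(iv) enters, or equivalently an application of $\BC$ to the statement that there exist common $\tfrac{1}{n}$-quasi-fixed points for arbitrarily large $n$), and iterate the negated hypothesis $r+1$ times to obtain a chain $x_0,x_1,\ldots,x_{r+1}$ with $\|x_{j+1}-u_0\|^2\le\|x_j-u_0\|^2-\tfrac{1}{k+1}$. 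The contradiction $\|x_{r+1}-u_0\|^2<0$ arises from the bound $b$ on the diameter of $C$ exactly as before.

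Next I would upgrade this to the $<$-version with a better function iterate by adapting the passage from Proposition~\ref{proj1} to Corollary~\ref{proj2} and then to Proposition~\ref{proj3}. The two ingredients are: (a) a multi-mapping version of Lemma~\ref{lem.conv}, which is immediate since the lemma is applied to each $U_i$ separately and $\ell$ is fixed, yielding the same $12b(k+1)^2$ bound; and (b) Lemma~\ref{prop.lem}, which is entirely inner-product-theoretic and does not involve $U$ at all, so it transfers without modification. Combining these yields, through the same chain of estimates as in the proof of Proposition~\ref{proj3}, a statement of the desired form but with the inner bound $\tfrac{1}{\tilde f(N)+1}$ replaced by $\tfrac{1}{f(N)+1}$ after the usual ``$+1$'' adjustment. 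The extra $\usw_{k,f}$-wrapping around $f$ appears for the same reason it did in Proposition~\ref{proj4}: in passing from an estimate involving $\langle x-u_0,x-y\rangle$ to one usable in the Halpern-style iteration one inflates by a factor of $b$, and $\usw_{k,f}$ guarantees that the resulting $b\|U_i(x)-x\|$ contribution is absorbed into $\tfrac{1}{2(k+1)}$.

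The main obstacle I expect is purely bookkeeping: keeping track of the interactions between the function $\usw_{k,f}$, the ``$+1$'' slack in $\usf_{(\cdot)}$, and the iterate count $\usr(k)$, and verifying that the resulting composite bound on $N$ indeed equals $12b(\usf_{\usw_{k,f}}^{(\usr(k))}(0)+1)^2$. There is no new mathematical content beyond that of Section~\ref{projection}; the point is that the bounded quantifier ``$\forall i<\ell$'' is absorbed cleanly throughout (both in the formal proof inside $\Br^+$ and in the quantitative bound) precisely because $\ell$ is a fixed constant and hence every ``for all $i<\ell$'' is a finite conjunction of the corresponding single-mapping statements.
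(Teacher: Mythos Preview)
Your plan is correct and is exactly what the paper does: it simply says to repeat the proof of Proposition~\ref{proj3} with the innocuous bounded quantifier $\forall i<\ell$ inserted throughout, and your breakdown (adapt Proposition~\ref{proj1} using common approximate fixed points from Lemma~\ref{lemma-prop-un}(iv), apply Lemma~\ref{lem.conv} to each $U_i$ separately, carry Lemma~\ref{prop.lem} over unchanged) is precisely the content of that adaptation.

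One small correction to your bookkeeping remark: the $\usw_{k,f}$-wrapping is \emph{not} needed in this proposition. The conclusion here is still $\langle x-u_0,x-y\rangle<\tfrac{1}{k+1}$, exactly as in Proposition~\ref{proj3}, so the direct adaptation already yields the tighter bound $12b(\usf_f^{(\usr(k))}(0)+1)^2$; the stated bound with $\usw_{k,f}$ is simply slack (note $\usw_{k,f}\ge f$, hence $\usf_{\usw_{k,f}}\ge\usf_f$). The passage from $\langle x-u_0,x-y\rangle$ to $\langle x-u_0,U_i(x)-U_i(y)\rangle$ --- where the factor-$b$ inflation and hence the $\usw_{k,f}$-wrapping actually enter --- happens only in the \emph{next} step, Proposition~\ref{appbau.prop5}, which applies the present proposition to $2k+1$ and $\usw_{k,f}$.
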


We must change the conclusion of the implication to be compatible with our functions 
$\varphi_i$. I.e., we must replace the conclusion $\langle x-u_0,x-y\rangle < \frac{1}{k+1}$ by $$\forall i<\ell \left(\langle x-u_0, U_i(x)-U_i(y)\rangle < \frac{1}{k+1}\right).$$

This is done, in two steps, in the proposition below. 

\begin{proposition}\label{appbau.prop5}
Let $k\in \N$ and $f:\N \to \N $ be monotone. 
\be 
\item[\rm{(}i\rm{)}] There exist $N_0 \in \N$ with 
$N_0\leq \beta_0(k,f)$  and $x\in C$ such that  
\be
\item[\rm{(}$a_0$\rm{)}] $\|U_i(x)-x \| < \frac{1}{f(N_0)+1}$ for all $i<\ell$, and
\item[\rm{(}$b_0$\rm{)}] for all $y\in C$,
$\forall i<\ell\left(\|U_i(y)-y\|\leq \frac{1}{N_0+1}\right)
\to \forall i< \ell \left(\langle x-u_0,U_i(x)-y\rangle<\frac{1}{k+1}\right);$
\ee
\item[\rm{(}ii\rm{)}] There exist $N \in \N$ with 
$N\leq \beta(k,f)$  and $x\in C$ such that 
\be
\item[\rm{(}a\rm{)}] $\|U_i(x)-x \| < \frac{1}{f(N)+1}$ for all $i<\ell$, and
\item[\rm{(}b\rm{)}] for all $z\in C$, $\forall i<\ell\left(\|U_i(z)-z\|\leq \frac{1}{N+1}\right)
\to \forall i< \ell \left(\langle x-u_0,U_i(x)-U_i(z)\rangle < \frac{1}{k+1}\right),$
\ee
\ee
where 
\beq\label{def-beta-Bauschke}
\beta_0(k,f):=12b(\usf_{\usw_{k,f}}^{(\usr(2k+1))}(0)+1)^2 \quad \mbox{and} \quad \beta(k,f)=3\beta_0(k,g)+2,
\eeq
with $g(m)=f(3m+2)$.
\end{proposition}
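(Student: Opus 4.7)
The proof reduces both parts to Proposition \ref{proj_adapted}, the multi-mapping version of the projection result.

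For part (i), the plan is to apply Proposition \ref{proj_adapted} with parameter $2k+1$ in place of $k$, and with $\usw_{k,f}$ substituted for $f$. This is the very same device already used in Proposition \ref{proj4} of the Wittmann analysis: replacing $f$ by $\usw_{k,f}$ forces the witness $x$ to satisfy the stronger bound $\|U_i(x)-x\| < \frac{1}{2b(k+1)+1}$ for each $i<\ell$, which is precisely what is needed to absorb, via Cauchy--Schwarz, a correction term of size $b\|U_i(x)-x\|$. Concretely, Proposition \ref{proj_adapted} delivers $N_0 \leq 12b\big(\usf_{\usw_{k,f}}^{(\usr(2k+1))}(0)+1\big)^2 = \beta_0(k,f)$ together with some $x\in C$ satisfying $\|U_i(x)-x\| < \frac{1}{\usw_{k,f}(N_0)+1}$ for all $i<\ell$ and $\langle x-u_0,\,x-y\rangle < \frac{1}{2(k+1)}$ for every $y\in C$ satisfying the approximate common fixed point condition at level $N_0$. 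Claim $(a_0)$ is then immediate because $\usw_{k,f}(N_0) \geq f(N_0)$ by definition of $\usw$. Claim $(b_0)$ comes from the decomposition
$$\langle x-u_0,\,U_i(x)-y\rangle = \langle x-u_0,\,U_i(x)-x\rangle + \langle x-u_0,\,x-y\rangle,$$
where the first summand is estimated by Cauchy--Schwarz and the bound $\usw_{k,f}(N_0)\geq 2b(k+1)$, and the second by the projection inequality above; each is strictly less than $\frac{1}{2(k+1)}$.

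For part (ii), the plan is to apply part (i) with the inflated function $g(m) := f(3m+2)$ to obtain $(N_0,x)$ with $N_0\leq\beta_0(k,g)$, and then set $N := 3N_0+2$. Then $N\leq 3\beta_0(k,g)+2 = \beta(k,f)$, and condition $(a)$ transfers from $(a_0)$ because $g(N_0) = f(3N_0+2) = f(N)$. The crux is establishing $(b)$: given $z\in C$ with $\|U_j(z)-z\|\leq\frac{1}{N+1}$ for every $j<\ell$, we invoke $(b_0)$ not on $z$ itself but on $y := U_i(z)\in C$, for each fixed $i<\ell$. The key estimate, using nonexpansiveness of $U_j$ and two applications of the triangle inequality, is
$$\|U_j(U_i(z)) - U_i(z)\| \leq \|U_j(U_i(z))-U_j(z)\| + \|U_j(z)-z\| + \|z-U_i(z)\| \leq \frac{3}{N+1} = \frac{1}{N_0+1},$$
the last equality being $N+1 = 3(N_0+1)$. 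Hence $(b_0)$ applies to $y$ and yields $\langle x-u_0,\,U_i(x)-U_i(z)\rangle < \frac{1}{k+1}$, as required.

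The only real subtlety---more a bookkeeping matter than a genuine obstacle---is choosing the scaling in part (ii) so as to absorb the factor of $3$ coming from the triangle inequality while matching the stated bound $\beta(k,f)$. The choices $N = 3N_0+2$ and $g(m) = f(3m+2)$ are precisely calibrated so that $f(N) = g(N_0)$ (making $(a)$ automatic) and $\frac{3}{N+1} = \frac{1}{N_0+1}$ (making the hypothesis of $(b_0)$ applicable to $U_i(z)$).
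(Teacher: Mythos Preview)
Your proof is correct and follows essentially the same approach as the paper: part (i) applies Proposition~\ref{proj_adapted} to $2k+1$ and $\usw_{k,f}$ and then uses the decomposition $\langle x-u_0,U_i(x)-y\rangle = \langle x-u_0,U_i(x)-x\rangle + \langle x-u_0,x-y\rangle$, while part (ii) applies (i) with $g(m)=f(3m+2)$, sets $N=3N_0+2$, and verifies that each $U_i(z)$ satisfies the hypothesis of $(b_0)$ via the triangle inequality and nonexpansiveness. The only cosmetic difference is that the paper first proves the stronger conclusion $\forall i,j<\ell\ \langle x-u_0,U_i(x)-U_j(z)\rangle<\tfrac{1}{k+1}$ before specializing to $j=i$, whereas you set $y=U_i(z)$ directly.
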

\begin{proof}
\be
\item 
Applying Proposition \ref{proj_adapted} to $2k+1$ and to the monotone function 
$\usw_{k,f}$, we get 
 $N_0\leq \beta_0(k,f)$ and $x\in C$ such that 
 $\|U_i(x)-x \| < \frac{1}{\usw_{k,f}(N_0)+1}$ for all
$i<\ell$ and, for all $y\in C$,
\beq\label{appbau.prop5-h}
\forall i<\ell\left(\|U_i(y)-y\|\leq \frac{1}{N_0+1}\right) \to \langle x-u_0,x-y\rangle < 
\frac{1}{2(k+1)}.\eeq

By the definition of $\usw_{k,f}$, we have that, for all $i<\ell$, 
$\|U_i(x)-x\|< \frac{1}{\usw_{k,f}(N_0)+1}\leq \frac{1}{f(N_0)+1}$.
Thus, $(a_0)$ holds. 
Let now $y\in C$ be such that the premise of the implication in $(b_0)$ holds and let $i<\ell$ be arbitrary.
It follows that 
\bua
\langle x-u_0, U_i(x)-y\rangle &=& \langle x-u_0, U_i(x)-x\rangle +\langle x-u_0, x-y\rangle 
\leq  b\cdot \|U_i(x)-x\| + \frac{1}{2(k+1)} \quad \text{by \eqref{appbau.prop5-h}}\\
&\leq & \frac{1}{2(k+1)}+\frac{1}{2(k+1)}=\frac{1}{k+1},
\eua
since $\|U_i(x)-x\|\leq \frac{1}{\usw_{k,f}(N)+1}\leq\frac{1}{2b(k+1)}$. Hence, $(b_0)$ holds too.

\item Apply (i) for $k$ and $g$ to get $N_0\leq \beta_0(k,g)$ and $x\in C$ satisfying $(a_0)$ for $g$ and $(b_0)$. Let $N:=3N_0+2\leq 3\beta_0(k,g)+2=\beta(k,f)$. Then, for all $i<\ell$, we have that 
$\|U_i(x)-x\| < \frac{1}{g(N_0)+1}=\frac{1}{f(N)+1}$, so (a) holds.
In order to prove (b), assume that $z\in C$ is such that 
$\forall i<\ell\left(\|U_i(z)-z\|\leq \frac{1}{N+1}\right)$.

For all $i,j< \ell$, we have that 
\bua
\|U_i(U_j(z))-U_j(z)\|&\leq & \|U_i(U_j(z))-U_i(z)\|+\|U_i(z)-z\|+\|z-U_j(z)\|\\
&\leq & \|U_j(z)-z\|+\frac{2}{N+1}\leq \frac{3}{N+1}=\frac{1}{N_0+1}.
\eua
Thus, we can apply $(b_0)$ for $y:=U_j(z)$, with $j<\ell$ arbitrary, and conclude 
\[\forall i< \ell \,\forall j<\ell \left(\langle x-u_0,U_i(x)-U_j(z)\rangle<\frac{1}{k+1}\right).\]
Take $j:=i$ above to get (b).
\ee
\end{proof}

Thus, we can apply Proposition~\ref{bau.prop3} to get, for every $k \in \N$ and any monotone 
function $f\in \N^\N$, an $N \in \N$ with 
$N\leq \psi(k,f)$ and $x\in C$ such that  
\begin{equation*}\label{appbau.Bauschke-1}
\forall i<\ell \left(\|U_i(x)-x\| < \frac{1}{f(N)+1}\right)  \wedge \, \forall n\in [N, f(N)] 
\, 
\forall i<\ell 
\left( \lan x-u_0, U_i(x)-U_i(u_n)\ran  <  \frac{1}{k+1}\right),
\end{equation*}	
where 
\beq\label{def-psi-Bauschke}
\psi(k,f):= \alpha\left(\beta\left(k,\fseco{f}\right),f\right) = \widehat{\alpha}\left(\beta\left(k,\fseco{f}\right)\right), \quad \text{with~} \fseco{f}(m):=f(\alpha(m,f)).
\eeq 
Hence, condition (a) of Proposition~\ref{bau.prop4} is satisfied with $\psi$ as above.

Next, we present the quantitative result of the main combinatorial step in Bauschke's proof, 
slightly adapted to fit into the general principle.

\begin{proposition}\label{appbau.prop7}
Assume that $k, n, p\in \N$ and $x\in C$ satisfy
\[\forall i< \ell \left(\|U_i(x)-x\|\leq \frac{1}{9b(k+1)^2(p+1)}\right) \wedge\, 
	\forall r\in [n,p]\,\forall i<\ell\left(\langle x-u_0,U_i(x)-U_i(u_r)\rangle \leq \frac{1}{12(k+1)^2}\right).\]
Then 
\[\forall m\in[\sigma'(k,n),p]\left(\|u_m-x\| < \frac{1}{k+1}\right),\]
where $\sigma'(k,n):=\nu(\widetilde{n}+1 + \lceil \ln(3b^2(k+1)^2)\rceil)$ with 
$\widetilde{n}:=\max\{n,\,\mu(6b^2(k+1)^2) \}$.
\end{proposition}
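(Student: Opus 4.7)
The plan is to derive a Tan--Xu style recursive inequality for $a_m := \|u_m - x\|^2$, valid on $m \in [\widetilde{n}, p-1]$:
\[
a_{m+1} \;\leq\; (1 - \lambda_{m+1})\, a_m \;+\; \frac{\lambda_{m+1}}{3(k+1)^2} \;+\; \frac{1}{3(k+1)^2(p+1)},
\]
and then to iterate it from $m = \widetilde{n}$ up to $m = M - 1$ for any $M \in [\sigma'(k,n), p]$, obtaining $a_M < 1/(k+1)^2$ at the end.

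To derive the recursion, I would write $u_{m+1} - x = \lambda_{m+1}(u_0 - x) + (1-\lambda_{m+1})[U_{m+1}(u_m) - U_{m+1}(x)] + (1-\lambda_{m+1})\,e_{m+1}$, where $e_{m+1} := U_{m+1}(x) - x$ satisfies $\|e_{m+1}\| \leq \frac{1}{9b(k+1)^2(p+1)}$ by hypothesis (since $U_{m+1} = T_{(m+1)\bmod \ell}$ is some $T_i$ with $i<\ell$). Expanding $\|u_{m+1} - x\|^2$, the contribution from the $e_{m+1}$ summand can be absorbed into the error term $3b\|e_{m+1}\| \leq \frac{1}{3(k+1)^2(p+1)}$ using Cauchy--Schwarz and the diameter bound. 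The remaining squared norm decomposes into $\lambda_{m+1}^2\|u_0-x\|^2 + 2\lambda_{m+1}(1-\lambda_{m+1})\langle u_0 - x, U_{m+1}(u_m) - U_{m+1}(x)\rangle + (1-\lambda_{m+1})^2 \|U_{m+1}(u_m) - U_{m+1}(x)\|^2$. The third summand is $\leq (1-\lambda_{m+1})\|u_m - x\|^2$ by nonexpansiveness of $U_{m+1}$ and $(1-\lambda)^2 \leq 1-\lambda$. The middle summand equals $2\lambda_{m+1}(1-\lambda_{m+1})\langle x - u_0, U_{m+1}(x) - U_{m+1}(u_m)\rangle$, which the main hypothesis (applied at $r = m \in [n, p]$) bounds by $\frac{\lambda_{m+1}}{6(k+1)^2}$. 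The first summand $\lambda_{m+1}^2 b^2$ is then absorbed into $\frac{\lambda_{m+1}}{6(k+1)^2}$ by invoking $m \geq \mu(6b^2(k+1)^2)$, which forces $\lambda_{m+1} \leq \frac{1}{6b^2(k+1)^2}$; this is precisely the role of the second argument of the maximum defining $\widetilde{n}$.

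Then I iterate the recursion from $m = \widetilde{n}$ to $m = M - 1$. The telescoping identity $\sum_{j=\widetilde{n}+1}^M \lambda_j \prod_{i=j+1}^M(1 - \lambda_i) = 1 - \prod_{j=\widetilde{n}+1}^M(1-\lambda_j) \leq 1$ shows that the $\lambda_{m+1}$-scaled error contributes at most $\frac{1}{3(k+1)^2}$; the constant error contributes at most $\frac{M - \widetilde{n}}{3(k+1)^2(p+1)} \leq \frac{1}{3(k+1)^2}$, since $M \leq p$. For $M \geq \sigma'(k,n)$, the definition of $\nu$ together with the crude bound $\sum_{j=0}^{\widetilde{n}} \lambda_j \leq \widetilde{n} + 1$ (because $\lambda_j \in (0,1)$) gives $\sum_{j=\widetilde{n}+1}^M \lambda_j \geq \lceil \ln(3b^2(k+1)^2)\rceil$, and $1-x \leq e^{-x}$ then yields $a_{\widetilde{n}}\prod_{j=\widetilde{n}+1}^M(1-\lambda_j) \leq \frac{b^2}{3b^2(k+1)^2} = \frac{1}{3(k+1)^2}$. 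Adding the three pieces produces $a_M < \frac{1}{(k+1)^2}$, i.e.\ $\|u_M - x\| < \frac{1}{k+1}$. The main obstacle will be the careful bookkeeping of the numerical constants so that each of the three error contributions precisely matches the target $\frac{1}{3(k+1)^2}$; the specific forms $\frac{1}{9b(k+1)^2(p+1)}$ and $\frac{1}{12(k+1)^2}$ in the hypotheses, together with the choices inside $\sigma'$, are calibrated exactly for this balance.
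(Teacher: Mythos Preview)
Your proposal is correct and follows essentially the same route as the paper: you derive the same recursive inequality $\|u_{r+1}-x\|^2 \leq (1-\lambda_{r+1})\|u_r-x\|^2 + \frac{\lambda_{r+1}}{3(k+1)^2} + \frac{1}{3(k+1)^2(p+1)}$ for $r\in[\widetilde{n},p-1]$, and then iterate it using the telescoping identity, the bound $M-\widetilde{n}\leq p$, and the estimate $\prod_{j=\widetilde{n}+1}^{M}(1-\lambda_j)\leq \exp(-\sum_{j=\widetilde{n}+1}^{M}\lambda_j)$ together with $(\mathrm{C}2_q)$ and the crude bound $\sum_{j=0}^{\widetilde{n}}\lambda_j\leq \widetilde{n}+1$. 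The only cosmetic difference is that you isolate the perturbation $e_{m+1}=U_{m+1}(x)-x$ as a separate summand and control its contribution via Cauchy--Schwarz and $3b\|e_{m+1}\|$, whereas the paper expands $\|U_{r+1}(u_r)-x\|^2$ directly and arrives at the term $3b(1-\lambda_{r+1})\|x-U_{r+1}(x)\|$; both lead to the same error $\frac{1}{3(k+1)^2(p+1)}$.
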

\begin{proof}
First, let us remark that for all $r\in\N$ and $x\in C$,
\bua
\|u_{r+1}-x\|^2 &=& \|\lambda_{r+1}u_0+(1-\lambda_{r+1})U_{r+1}(u_r)-x\|^2=
\|\lambda_{r+1}(u_0-x)+(1-\lambda_{r+1})(U_{r+1}(u_r)-x)\|^2\\
&=& \lambda_{r+1}^2\|u_0-x\|^2+2\lambda_{r+1}(1-\lambda_{r+1})\lan u_0-x, U_{r+1}(u_r)-x\ran+
(1-\lambda_{r+1})^2 \|U_{r+1}(u_r)-x\|^2\\
& =& \lambda_{r+1}^2\|u_0-x\|^2+2\lambda_{r+1}(1-\lambda_{r+1})(\lan x-u_0, x-U_{r+1}(x)\ran+
\lan x-u_0, U_{r+1}(x)-U_{r+1}(u_r)\ran\\
&& +(1-\lambda_{r+1})^2 \|(U_{r+1}(u_r)-U_{r+1}(x))+(U_{r+1}(x)-x)\|^2\\
& \leq & \lambda_{r+1}^2b^2+2b\lambda_{r+1}(1-\lambda_{r+1})\|x-U_{r+1}(x)\|+2\lambda_{r+1}(1-\lambda_{r+1})
\lan x-u_0, U_{r+1}(x)-U_{r+1}(u_r)\ran\\
&& + (1-\lambda_{r+1})^2(\|u_r-x\|^2+2\|u_r-x\|\|U_{r+1}(x)-x\|+\|U_{r+1}(x)-x\|^2)\\
& \leq & \lambda_{r+1}^2b^2+2b\lambda_{r+1}(1-\lambda_{r+1})\|x-U_{r+1}(x)\|+2\lambda_{r+1}(1-\lambda_{r+1})
\lan x-u_0, U_{r+1}(x)-U_{r+1}(u_r)\ran\\
&& + 3b(1-\lambda_{r+1})^2\|x-U_{r+1}(x)\| + (1-\lambda_{r+1})^2\|u_r-x\|^2\\
& \leq & \lambda_{r+1}^2b^2+2\lambda_{r+1}(1-\lambda_{r+1})
\lan x-u_0, U_{r+1}(x)-U_{r+1}(u_r)\ran \\
&& +(2b\lambda_{r+1}(1-\lambda_{r+1})+3b(1-\lambda_{r+1})^2)\|x-U_{r+1}(x)\| + 
(1-\lambda_{r+1})^2\|u_r-x\|^2\\
&\leq & \lambda_{r+1}(\lambda_{r+1}b^2+ 2\lan x-u_0, U_{r+1}(x)-U_{r+1}(u_r)\ran)\\
&& + 3b(1-\lambda_{r+1})\|x-U_{r+1}(x)\|+(1-\lambda_{r+1})\|u_r-x\|^2
\eua
\details{$2b\lambda_{r+1}(1-\lambda_{r+1})+3b(1-\lambda_{r+1})^2=b(1-\lambda_{r+1})
(2\lambda_{r+1}+3(1-\lambda_{r+1}))=b(1-\lambda_{r+1})(3-\lambda_{r+1})$}

Fix $k, n, p\in \N$ and $x\in C$, and assume that they satisfy the hypothesis of the theorem. Take $r \in \N$ with $r\in [\tilde{n},p]\se [n,p]$. Then 
$\lambda_{r+1}\leq \frac{1}{6b^2(k+1)^2}$, since $r+1>\tilde{n}\geq \mu(6b^2(k+1)^2)$ and $\mu$ 
satisfies $(\rm{C}1)_q$. Moreover, $\langle x-u_0, U_{r+1}(x)-U_{r+1}(u_r)\rangle 
\leq \frac{1}{12(k+1)^2}$ by hypothesis. Hence, 
\[\lambda_{r+1}b^2+ 2\lan x-u_0, U_{r+1}(x)-U_{r+1}(u_r)\ran \leq \frac{1}{3(k+1)^2}.\]
Furthermore, as $\|x-U_{r+1}(x)\|\leq \frac{1}{9b(k+1)^2(p+1)}$, we get that
\bua
\|u_{r+1}-x\|^2 \leq \lambda_{r+1}\frac1{3(k+1)^2}+(1-\lambda_{r+1})\frac1{3(k+1)^2(p+1)}+
(1-\lambda_{r+1})\|u_r-x\|^2.
\eua

By induction on $m$, we can prove that for $m\in [\tilde{n}+1, p]$,
\begin{equation}\label{appbau.eq13}
\|u_m-x\|^2\leq \frac{1}{3(k+1)^2}+A_m\frac{1}{3(k+1)^2(p+1)}
+ B_m\|u_{\tilde{n}}-x\|^2
\end{equation}
where   $A_m=\sum_{i=\tilde{n}+1}^{m}\prod_{j=i}^{m}(1-\lambda_j)$ and  $B_m=\prod_{j=\tilde{n}+1}^{m}(1-\lambda_j)$.

\details{We prove \eqref{appbau.eq13} by induction on $m$.
$m:=\tilde{n}+1$: We have that 
\bua 
\|u_{\tilde{n}+1}-x\|^2 & \leq &  \lambda_{\tilde{n}+1}\frac1{3(k+1)^2}+(1-\lambda_{\tilde{n}+1})\frac1{3(k+1)^2(p+1)}+
(1-\lambda_{\tilde{n}+1})\|u_{\tilde{n}}-x\|^2.
\eua
Furthermore,
\bua
A_{\tilde{n}+1} & = & \lambda_{\tilde{n}+1}+\sum_{i=\tilde{n}+2}^{\tilde{n}+1}\lambda_{i-1}\prod_{j=i}^{\tilde{n}+1}
(1-\lambda_j)=\lambda_{\tilde{n}+1},\\
B_{\tilde{n}+1}  & = & \sum_{i=\tilde{n}+1}^{{\tilde{n}+1}}\prod_{j=i}^{{\tilde{n}+1}}(1-\lambda_j)=1-\lambda_{\tilde{n}+1}\\
C_{\tilde{n}+1}& = &\prod_{j=\tilde{n}+1}^{{\tilde{n}+1}}(1-\lambda_j)=1-\lambda_{\tilde{n}+1}.
\eua
$m:=\tilde{n}+2$: We have that 
\bua
\|u_{\tilde{n}+2}-x\|^2 & \leq &  \lambda_{\tilde{n}+2}\frac1{3(k+1)^2}+(1-\lambda_{\tilde{n}+2})\frac1{3(k+1)^2(p+1)}+
(1-\lambda_{\tilde{n}+2})\|u_{\tilde{n}+1}-x\|^2\\
& \leq & \lambda_{\tilde{n}+2}\frac1{3(k+1)^2}+(1-\lambda_{\tilde{n}+2})\frac1{3(k+1)^2(p+1)}\\
&& + (1-\lambda_{\tilde{n}+2})\left(\lambda_{\tilde{n}+1}\frac1{3(k+1)^2}+(1-\lambda_{\tilde{n}+1})\frac1{3(k+1)^2(p+1)}+
(1-\lambda_{\tilde{n}+1})\|u_{\tilde{n}}-x\|^2\right)\\
&=& \left(\lambda_{\tilde{n}+2}+(1-\lambda_{\tilde{n}+2})\lambda_{\tilde{n}+1}\right)\frac1{3(k+1)^2}+
\left((1-\lambda_{\tilde{n}+2})+(1-\lambda_{\tilde{n}+2})(1-\lambda_{\tilde{n}+1})\right)\frac1{3(k+1)^2(p+1)}\\
&& +(1-\lambda_{\tilde{n}+2})(1-\lambda_{\tilde{n}+1})\|u_{\tilde{n}}-x\|^2
\eua
Furthermore, 
\bua
A_{\tilde{n}+2} & = & \lambda_{\tilde{n}+2}+\sum_{i=\tilde{n}+2}^{\tilde{n}+2}\lambda_{i-1}\prod_{j=i}^{\tilde{n}+2}
(1-\lambda_j)=\lambda_{\tilde{n}+2}+\lambda_{\tilde{n}+1}(1-\lambda_{\tilde{n}+2}),\\
B_{\tilde{n}+2}  & = & \sum_{i=\tilde{n}+1}^{\tilde{n}+2}\prod_{j=i}^{{\tilde{n}+2}}(1-\lambda_j)=1-\lambda_{\tilde{n}+1}=
 \prod_{j=\tilde{n}+1}^{{\tilde{n}+2}}(1-\lambda_j)+\prod_{j=\tilde{n}+2}^{{\tilde{n}+2}}(1-\lambda_j)\\
&=& (1-\lambda_{\tilde{n}+1})(1-\lambda_{\tilde{n}+2})+(1-\lambda_{\tilde{n}+2})\\
C_{\tilde{n}+2}& = &\prod_{j=\tilde{n}+1}^{{\tilde{n}+2}}(1-\lambda_j)=(1-\lambda_{\tilde{n}+1})(1-\lambda_{\tilde{n}+2}).
\eua
$m\Ra m+1$:
We have that 
\bua
|u_{m+1}-x\|^2 & \leq &  \lambda_{m+1}\frac1{3(k+1)^2}+(1-\lambda_{m+1})\frac1{3(k+1)^2(p+1)}+
(1-\lambda_{m+1})\|u_m-x\|^2 \\
&\leq &  \lambda_{m+1}\frac1{3(k+1)^2}+(1-\lambda_{m+1})\frac1{3(k+1)^2(p+1)}\\
&&+
(1-\lambda_{m+1})\left(A_m\frac{1}{3(k+1)^2}+B_m\frac{1}{3(k+1)^2(p+1)} 
+ C_m\|u_{\tilde{n}}-u\|^2\right)\\
&=& \left(\lambda_{m+1}+(1-\lambda_{m+1})A_m\right)\frac{1}{3(k+1)^2}+
\left((1-\lambda_{m+1})+(1-\lambda_{m+1})B_m\right)\frac{1}{3(k+1)^2(p+1)}\\
&& +(1-\lambda_{m+1})C_m\|u_{\tilde{n}}-u\|^2
\eua
Remark that
\bua
\lambda_{m+1}+(1-\lambda_{m+1})A_m &=& \lambda_{m+1}+
(1-\lambda_{m+1})\left(\lambda_m+\sum_{i=\tilde{n}+2}^m\lambda_{i-1}\prod_{j=i}^{m}(1-\lambda_j)\right)\\
&=& \lambda_{m+1}+\lambda_{m}\prod_{j=m+1}^{m+1}(1-\lambda_j)+\sum_{i=\tilde{n}+2}^m\lambda_{i-1}\prod_{j=i}^{m+1}(1-\lambda_j)\\
&=& A_{m+1}\\
(1-\lambda_{m+1})+(1-\lambda_{m+1})B_m &=& (1-\lambda_{m+1})+(1-\lambda_{m+1})\sum_{i=\tilde{n}+1}^{m}
\prod_{j=i}^{m}(1-\lambda_j)\\
&=& \prod_{j=m+1}^{m+1}(1-\lambda_j)+\sum_{i=\tilde{n}+1}^{m}
\prod_{j=i}^{m+1}(1-\lambda_j)\\
&=& B_{m+1}\\
(1-\lambda_{m+1})C_m &=& (1-\lambda_{m+1})\prod_{j=\tilde{n}+1}^{m}(1-\lambda_j)=C_{m+1}.
\eua
}

\details{I will prove, by induction on $m$, that for all $m\in [\tilde{n}+1,p]$ the following inequality holds:
\begin{equation}\label{appbau.neweq13}
\|u_m-x\|^2\leq \frac{1}{3(k+1)^2}+B_m\frac{1}{3(k+1)^2(p+1)}+ C_m\|u_{\tilde{n}}-x\|^2,
\end{equation}
where $B_m=\sum_{i=\tilde{n}+1}^{m}\prod_{j=i}^{m}(1-\lambda_j)$ and $C_m=\prod_{j=\tilde{n}+1}^{m}(1-\lambda_j)$.\\

We showed that for any $r\in [\tilde{n}, p]$
\begin{equation}\label{ineq}
\|u_{r+1}-x\|^2 \leq \lambda_{r+1}\frac{1}{3(k+1)^2}+(1-\lambda_{r+1})\frac{1}{3(k+1)^2(p+1)}+(1-\lambda_{r+1})\|u_r-x\|^2.
\end{equation}

$m=\tilde{n}+1$: By \eqref{ineq} applied to $r=\tilde{n}$,
\begin{align*}
\|u_{\tilde{n}+1}-x\|^2 & \leq \lambda_{\tilde{n}+1}\frac1{3(k+1)^2}+(1-\lambda_{\tilde{n}+1})\frac1{3(k+1)^2(p+1)}+ (1-\lambda_{\tilde{n}+1})\|u_{\tilde{n}}-x\|^2\leq \\
&\leq \frac1{3(k+1)^2}+(1-\lambda_{\tilde{n}+1})\frac1{3(k+1)^2(p+1)}+
(1-\lambda_{\tilde{n}+1})\|u_{\tilde{n}}-x\|^2,
\end{align*}
as we wanted.\\

Consider $m\in[\tilde{n}+1,p[$ satisfying \eqref{appbau.neweq13}. We want to see that \eqref{appbau.neweq13} holds for $m+1$. Starting with \eqref{ineq} applied to $r=m$ we have the following:
\begin{align*}
\|u_{m+1}-x\|^2 &\leq \lambda_{m+1}\frac1{3(k+1)^2}+(1-\lambda_{m+1})\frac1{3(k+1)^2(p+1)}+ (1-\lambda_{m+1})\|u_m-x\|^2\leq_{\text{I.H.}}\\
&\leq_{\text{I.H.}} \lambda_{m+1}\frac1{3(k+1)^2}+(1-\lambda_{m+1})\frac1{3(k+1)^2(p+1)} \\
&\qquad + (1-\lambda_{m+1})\big[\frac{1}{3(k+1)^2}+B_m\frac{1}{3(k+1)^2(p+1)}
+ C_m\|u_{\tilde{n}}-x\|^2\big]=\\
&=\frac1{3(k+1)^2}\big[\lambda_{m+1} + (1-\lambda_{m+1})\big] + \frac{1}{3(k+1)^2(p+1)}\big[(1-\lambda_{m+1}) + (1-\lambda_{m+1})B_m\big]\\
&\qquad +(1-\lambda_{m+1})C_m\|u_{\tilde{n}}-x\|^2=\\
&=\frac{1}{3(k+1)^2}+B_{m+1}\frac{1}{3(k+1)^2(p+1)}+ C_{m+1}\|u_{\tilde{n}}-x\|^2
\end{align*}
For the last equality we used
\begin{align*}
&(1-\lambda_{m+1}) + (1-\lambda_{m+1})B_m=B_{m+1}\\
&(1-\lambda_{m+1})C_m=C_{m+1}
\end{align*}
}

Let $m\in[\sigma'(k,n),p]$ be arbitrary. Since $m\leq p$, we have that $A_m\leq m-\tilde{n}<p+1$. 
As $\sigma'(k,n)\geq \tilde{n}+1$, it follows that $m\in[\tilde{n}+1, p]$, so 
we can apply \eqref{appbau.eq13} and get 
\begin{equation}\label{appbau.eq131}
\|u_m-x\|^2 < \frac{1}{3(k+1)^2}+\frac{1}{3(k+1)^2}+ B_m\|u_{\tilde{n}}-x\|^2
\end{equation}

Now, because $m\geq \sigma'(k,n)$, we get 
\begin{equation*}
\sum_{j=0}^{m}\lambda_j \geq \sum_{j=0}^{\sigma'(k,n)}\lambda_j\geq \tilde{n}+1 +
\ln(3b^2(k+1)^2)\geq \sum_{j=0}^{\tilde{n}}\lambda_j+\ln(3b^2(k+1)^2).
\end{equation*}
Therefore, $\sum\limits_{j=\tilde{n}+1}^{m}\lambda_j\geq \ln(3b^2(k+1)^2)$. 
This, in turn, implies
\begin{equation}\label{appbau.eq14}
B_m\|u_{\tilde{n}}-x\|^2\leq b^2\exp\left(-\sum_{j=\tilde{n}+1}^{m}\lambda_j\right)
\leq \frac{1}{3(k+1)^2}.
\end{equation}
The conclusion follows.
\end{proof}

Note that Proposition~\ref{Wit-ii-section-6} is the particular case of the above proposition. One can see this by putting $\ell=1$ and $\lambda_n=\frac{1}{n+1}$, and taking into account that $\mu(n)=n$ is a rate of convergence towards $0$ for the sequence $\left(\frac{1}{n+1}\right)$ and that $\nu(n)=\exp(n)$ is a rate of divergence for $\sum_n\frac1{n+1}$.

\medbreak

We are now in position to apply Proposition~\ref{appbau.prop7} with $x$, $n$, $k$ and $p:= f(\sigma'(k,n))$ in order to obtain 
condition (ii) of Proposition~\ref{bau.prop4}. Just let
\bua
\gamma(k,n,f) := 9b(k+1)^2(f(\sigma'(k,n))+1)-1, & \delta(k) := 12(k+1)^2-1, \\
\eta(k,n,f) := f(\sigma'(k,n)) &  \,\,\text{~and} &  M := \sigma(k,n,f):= \sigma'(k,n).
\eua

\details{
Let $k,n\in\N$, $f\in \N^\N$ be monotone and $x\in X$ be such that the premise of (ii) of 
Proposition~\ref{bau.prop4} holds with 
$\gamma,\delta, \eta,\sigma$ defined as above. Let $M:=\sigma(k,n,f)=\sigma'(k,n)$. We have to prove that 
for all $m\in[M,f(M)]$, $\|u_m-x\| < \frac{1}{k+1}$. 

We show that we can apply Proposition~\ref{appbau.prop7}. We verify first that the two hypotheses hold.

We have that $\forall i<\ell \left(\|U_i(x)-x\|\leq \frac{1}{\gamma(k,n,f)+1}\right)$ by the 
first premise of (ii) of Proposition~\ref{bau.prop4}. This is equivalent with 
$\forall i<\ell \left(d(U_i(x),x)\leq \frac{1}{9b(k+1)^2(p+1)}\right)$.

Let $m\in [n,p]=[n,f(\sigma'(k, n))]=[n,\eta(k, n, f)]$ be arbitrary. By the second premise of 
(ii) of Proposition~\ref{bau.prop4}, we have that 
$$\forall i<\ell\left(\varphi_i(x,x) \leq \varphi_i(x,u_m) + \dfrac{1}{\delta(k)+1}\right),$$
that is 
$$\forall i<\ell\left(\langle x-u_0,U_i(x)-U_i(u_m)\rangle < \frac{1}{12(k+1)^2}\right).$$

Hence, the two hypotheses of Proposition~\ref{appbau.prop7} hold, so we can apply it to get that 
\[\forall m\in[\sigma'(k,n),p]\left(\|u_m-x\|\leq \frac{1}{k+1}\right),\]
that is
\[\forall m\in[\sigma'(k,n),f(\sigma'(k, n))]\left(\|u_m-x\|\leq \frac{1}{k+1}\right),\]
that is, by the definition of $M$,
\[\forall m\in[M,f(M)]\left(\|u_m-x\|\leq \frac{1}{k+1}\right).\]
Furthermore, $M=\sigma(k,n,f)$.
}

Finally, we apply Proposition~\ref{bau.prop4} to obtain the metastable version of Bauschke's theorem.

\begin{theorem}\label{appbau.teo1}
Let $X$ be a Hilbert space, $C$ be a nonempty closed convex  bounded subset of 
$X$, $b\in \N^*$ be an upper bound on the diameter of $C$ and  $T_0,\ldots, T_{\ell-1}$ be  
nonexpansive selfmappings of $C$.

For each $n\in \N$, let $U_n$ be the mapping defined by \eqref{def-Un} and assume that $\tau: \N\to\N$ is a monotone function $\tau: \N\to\N$ satisfying \eqref{hyp-Fix-Ui-quant}. 
Consider a sequence $(\lambda_n)$ in $(0,1)$ and monotone functions $\mu,\nu,\xi:\N\to\N$ such that
$(\rm{C}1_q), (\rm{C}2_q)$ and $(\rm{C}3[\ell]_q)$ hold.
Let $u_0\in C$ be given and $(u_n)$ be the iteration defined by \eqref{def-un-Bauschke}.
 
Then, for all $k\in \N$ and every monotone function $f:\N\to \N$,
\begin{equation}
\exists N\leq \phi_b(k,f) \,\forall i,j \in [N,f(N)] \left( \|u_i-u_j\|\leq \dfrac{1}{k+1}\right),
\end{equation}
where $$\phi_b(k,f):=\sigma'\left(2k+1,\psi\left(48(k+1)^2-1,\fsec{f}\right)\right),$$
with $\sigma'$ defined in Proposition~\ref{appbau.prop7}, $\psi$ defined by \eqref{def-psi-Bauschke} and
$\fsec{f}(m)=36b(k+1)^2(f(\sigma'(2k+1,m))+1)-1$.
\end{theorem}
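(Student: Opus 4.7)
The plan is to assemble this result as a direct application of Proposition~\ref{bau.prop4}, with all of the pieces having already been lined up earlier in Section~\ref{section_6}. So the proof itself is essentially a matter of tracking the definitions through the bounds that have been computed, verifying that the hypotheses of Proposition~\ref{bau.prop4} are met, and then unfolding the resulting $\phi$ explicitly.

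First, I would verify hypothesis $(i)$ of Proposition~\ref{bau.prop4} for the choice $\varphi_i(x,y):=\langle x-u_0,U_i(y)\rangle$. This will come from Proposition~\ref{bau.prop3}: condition $(a)$ there is satisfied with $\alpha(k,f):=\widehat{\alpha}(k)$, which is an immediate consequence of Lemma~\ref{lemma-prop-un}(iv), while condition $(b)$ is satisfied with the functional $\beta(k,f)$ from Proposition~\ref{appbau.prop5}(ii). Feeding these into Proposition~\ref{bau.prop3} yields $(i)$ with the functional $\psi(k,f)=\widehat{\alpha}(\beta(k,\fseco{f}))$ displayed in \eqref{def-psi-Bauschke}.

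Second, I would verify hypothesis $(ii)$ of Proposition~\ref{bau.prop4} by instantiating Proposition~\ref{appbau.prop7} with $p:=f(\sigma'(k,n))$. Given $x\in C$ satisfying
$$\forall i<\ell\,\Bigl(\|U_i(x)-x\|\leq \tfrac{1}{\gamma(k,n,f)+1}\Bigr)\ \wedge\ \forall m\in[n,\eta(k,n,f)]\,\forall i<\ell\,\Bigl(\langle x-u_0,U_i(x)-U_i(u_m)\rangle \leq \tfrac{1}{\delta(k)+1}\Bigr),$$
with $\gamma(k,n,f):=9b(k+1)^2(f(\sigma'(k,n))+1)-1$, $\delta(k):=12(k+1)^2-1$ and $\eta(k,n,f):=f(\sigma'(k,n))$, the two hypotheses of Proposition~\ref{appbau.prop7} hold verbatim and so its conclusion gives $\|u_m-x\|<\tfrac{1}{k+1}$ for every $m\in[\sigma'(k,n),f(\sigma'(k,n))]$. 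Hence $(ii)$ holds with $\sigma(k,n,f):=\sigma'(k,n)$.

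Third, with $(i)$ and $(ii)$ in place I would apply Proposition~\ref{bau.prop4}. Its conclusion gives, for every $k\in\N$ and every monotone $f$, some $M\leq \sigma(2k+1,\psi(\delta(2k+1),\fsec{f}),f)$ such that $\|u_m-u_n\|<\tfrac{1}{k+1}$ for all $m,n\in[M,f(M)]$, where $\fsec{f}(m)=\max\{\gamma(2k+1,m,f),\eta(2k+1,m,f)\}$. Since $\sigma=\sigma'$ is independent of its third argument and since $\gamma\geq\eta$, this max is just $\gamma(2k+1,m,f)=36b(k+1)^2(f(\sigma'(2k+1,m))+1)-1$, and $\delta(2k+1)=48(k+1)^2-1$. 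This gives the announced bound $\phi_b(k,f)=\sigma'(2k+1,\psi(48(k+1)^2-1,\fsec{f}))$.

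The steps are all essentially bookkeeping once Propositions~\ref{bau.prop3}, \ref{appbau.prop5}, \ref{bau.prop4} and \ref{appbau.prop7} are available; no step is really an obstacle. The only mildly delicate point is checking that the monotonicity conditions required by Proposition~\ref{bau.prop4} are indeed satisfied by the functionals $\psi$, $\gamma$, $\delta$, $\eta$ and $\sigma$ constructed above, which reduces to the monotonicity of $\widehat{\alpha}$, $\beta_0$, $\mu$, $\nu$, $\xi$ and $\tau$ — all of which are monotone by construction or by hypothesis.
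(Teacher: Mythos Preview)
Your proposal is correct and follows essentially the same route as the paper: verify $(i)$ of Proposition~\ref{bau.prop4} via Proposition~\ref{bau.prop3} (using Lemma~\ref{lemma-prop-un}(iv) for $\alpha$ and Proposition~\ref{appbau.prop5}(ii) for $\beta$), verify $(ii)$ by instantiating Proposition~\ref{appbau.prop7} with $p=f(\sigma'(k,n))$, and then read off $\phi_b$ from Proposition~\ref{bau.prop4}. Your unpacking of $\fsec{f}$ and $\delta(2k+1)$ matches the paper's computation exactly.
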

\details{
\bua
\phi_b(k,f) & := & \sigma\left(2k+1,\psi\big(\delta(2k+1),\fsec{f}\big), f\right)=
\sigma\left(2k+1,\psi\big(\tilde{k},\fsec{f}\big), f\right)\\
&=& \sigma'\bigg(2k+1,\psi\big(\delta{2k+1},\fsec{f}\big)\bigg)=\sigma'(2k+1,L_{k,f,b})\\
\eua
\bua 
\delta(2k+1) &= & 12(2k+2)^2-1=48(k+1)^2-1\\ [1mm]
\fsec{f}(m) &=& \max\{ \gamma(2k+1,m,f),\, \eta(2k+1,m,f)\}\\
&=& \max\{9b(2k+2)^2(f(\sigma'(2k+1,m))+1)-1, f(\sigma'(2k+1,m)) \}\\
&=& 36b(k+1)^2(f(\sigma'(2k+1,m))+1)-1\\
\eua
}

It is well-known that condition $(\rm{C}2)$, $\sum_n \lambda_n=\infty$, is equivalent with 
\bua 
(\rm{C}4) & \ds \prod_{n\to\infty}(1-\lambda_n)=0.
\eua
Hence, one can obtain general quantitative results by using, instead of a rate of divergence $\nu$ for
$\sum_n\lambda_n$, the quantitative version of $(\rm{C}4)$, asserting the existence of a rate of convergence $\theta$ for 
$\prod_{n\to\infty}(1-\lambda_n)$:
\bua
(\rm{C}4_q) &  \ds \forall k \in \N\, \left(\prod_{i=1}^{\theta(k)}(1-\lambda_i)\leq \frac{1}{k+1}\right).
\eua
This was done in \cite{KohlenbachLeustean(12)}, where Kohlenbach and the second author obtained rates of metastability 
for the generalization of Wittmann's theorem to 
CAT(0) spaces using both $(\rm{C}2_q)$ and $(\rm{C}4_q)$. As Kohlenbach remarked in \cite{Kohlenbach(11)}, for $\lambda_n=\frac1{n+1}$, 
one has an exponential $\nu$ and a linear $\theta$, so one gets, by using $(\rm{C}4_q)$, a quadratic rate of asymptotic regularity for the Halpern 
iteration (see \cite[Lemma 3.1]{Kohlenbach(11)}), significantly improving the exponential bound 
obtained in \cite{Leustean(07)}, where $(\rm{C}2_q)$ is used. As a consequence, better rates of 
metastability for Wittmann's theorem are obtained in \cite{Kohlenbach(11),KohlenbachLeustean(12)} 
compared to our Theorem~\ref{app:quant-wittmann}.

One can replace $(\rm{C}2_q)$ with $(\rm{C}4_q)$ also in the quantitative analysis of Bauschke's theorem and prove 
corresponding versions of Proposition~\ref{appbau.prop7} and Theorem~\ref{appbau.teo1} having as a consequence, for $\ell=1$, a 
metastable version of Wittmann's theorem with bounds similar to the ones computed in \cite{Kohlenbach(11),KohlenbachLeustean(12)}.
The drawback is that the proof of this new version becomes much more technical. In this paper,  
the focus is not on the complexity of the bounds, but on the method used to obtain them, so we think 
that it is better to keep the computations as simple as possible.

\mbox{}

We finish by pointing out that recently, K\"ornlein \cite{Koernlein(16)} applied proof mining methods to 
obtain quantitative versions of strong convergence results, due to Yamada \cite{Yamada(01)}, for the hybrid steepest descent method. 
As a consequence, he also obtains a metastable version of Bauschke's theorem.
A direction for future research would be to explore if the methods developed in this paper, based 
on bounded functional interpretation, can be used to obtain similar results with those in \cite{Koernlein(16)}.

\section{Coda}

The remote origins of this paper can be traced to an early intention of the first author to eventually use the bounded functional interpretation in proof mining. When the third author approached the first author for a possible PhD supervision, an opportunity arose to carry out this plan. We decided to  apply the bounded functional interpretation first to a result already mined. Of the myriad of such results, we have to thank Ulrich Kohlenbach for suggesting Browder's strong convergence theorem. The third author found that the mining of the projection argument in Browder's proof turns out to be simpler and more elegant with the bounded functional interpretation. The first author was, however, still disatisfied with the theoretical standing of the sequential weak compactness 
argument in Browder's proof. In the summer of 2017, he came up with the idea of using Heine-Borel compactness instead. The second author visited Lisbon for a 
week in October 2017. Together with the third author, they generalized the Heine-Borel compactness argument to metric spaces and proposed themselves to apply this general principle to the minings of the theorems of Wittmann and Bauschke. From February to April 2018, the third author visited the second author in Bucharest, where more work was done. The end result of these endeavours is the present article.

\section*{Acknowledgements}

Fernando Ferreira and Pedro Pinto were partially supported by Portuguese funding from FCT, Funda\c c\~ao para a Ci\^encia e a Tecnologia, under the project CMAFcIO: UID/MAT/04561/2019. Lauren\c{t}iu Leu\c{s}tean was partially supported by a grant of the Romanian Ministry of Research and Innovation, Program 1 - Development of the National RDI System, Subprogram 1.2 - Institutional Performance -  Projects for Funding the Excellence in RDI, contract number 15PFE/2018. Pedro Pinto also benefited from a FCT doctoral grant PD/BD/52645/2014, under the program Lisbon Mathematics PhD.

\bibliographystyle{plain}

\begin{thebibliography}{10}

\bibitem{Bauschke(96)}
H.~H. Bauschke.
\newblock The approximation of fixed points of compositions of nonexpansive
  mappings in {H}ilbert spaces.
\newblock {\em Journal of Mathematical Analysis and Applications},
  202:150--159, 1996.

\bibitem{Bezem(85)}
M.~Bezem.
\newblock Strongly majorizable functionals of finite type: a model for bar
  recursion containing discontinuous functionals.
\newblock {\em The Journal of Symbolic Logic}, 50:652--660, 1985.

\bibitem{Browder(67)}
F.~E. Browder.
\newblock Convergence of approximants to fixed points of nonexpansive nonlinear
  mappings in {B}anach spaces.
\newblock {\em Archive for Rational Mechanics and Analysis}, 24:82--90, 1967.

\bibitem{Engracia(09)}
P.~Engr\'acia.
\newblock {\em Proof-theoretic studies on the bounded functional
  interpretation}.
  \newblock \url{http://hdl.handle.net/10451/1626}.
\newblock PhD thesis, Universidade de Lisboa, 2009.

\bibitem{Ferreira(09)}
F.~Ferreira.
\newblock Injecting uniformities into {P}eano arithmetic.
\newblock {\em Annals of Pure and Applied Logic}, 157:122--129, 2009.

\bibitem{FerreiraOliva(05)}
F.~Ferreira and P.~Oliva.
\newblock Bounded functional interpretation.
\newblock {\em Annals of Pure and Applied Logic}, 135:73--112, 2005.

\bibitem{GerhardyKohlenbach(08)}
P.~Gerhardy and U.~Kohlenbach.
\newblock General logical metatheorems for functional analysis.
\newblock {\em Transactions of the American Mathematical Society},
  360:2615--2660, 2008.

\bibitem{Goedel(58)}
K.~G{\"o}del.
\newblock {\"U}ber eine bisher noch nicht ben{\"u}tzte {E}rweiterung des
  finiten {S}tandpunktes.
\newblock {\em dialectica}, 12:280--287, 1958.
\newblock Reprinted with an English translation in \cite{Goedel(90)}, pp.
  240-251.

\bibitem{Goedel(90)}
K.~G{\"o}del.
\newblock {\em Collected Works, Vol. II}.
\newblock S. Feferman {\em et al.}, eds. Oxford University Press, Oxford, 1990.

\bibitem{Halpern(67)}
B.~Halpern.
\newblock Fixed points of nonexpanding maps.
\newblock {\em Bulletin of the American Mathematical Society}, 73:957--961,
  1967.

\bibitem{Kohlenbach(96a)}
U.~Kohlenbach.
\newblock Analysing proofs in analysis.
\newblock In W.~Hodges, M.~Hyland, C.~Steinhorn, and J.~Truss, editors, {\em
  Logic: from Foundations to Applications}, pages 225--260. European Logic
  Colloquium (Keele, 1993), Oxford University Press, 1996.

\bibitem{Kohlenbach(05)}
U.~Kohlenbach.
\newblock Some logical metatheorems with applications in functional analysis.
\newblock {\em Transactions of the American Mathematical Society}, 357:89--128,
  2005.

\bibitem{Kohlenbach(06)}
U.~Kohlenbach.
\newblock A logical uniform boundedness principle for abstract metric and
  hyperbolic spaces.
\newblock {\em Electronic Notes in Theoretical Computer Science}, 165:81--93,
  2006.
\newblock {P}roceedings of {W}o{L}{L}{I}{C} 2006.

\bibitem{Kohlenbach(08)}
U.~Kohlenbach.
\newblock {\em Applied Proof Theory: Proof Interpretations and their Use in
  Mathematics}.
\newblock Springer Monographs in Mathematics. Springer-Verlag, Berlin, 2008.

\bibitem{Kohlenbach(11)}
U.~Kohlenbach.
\newblock On quantitative versions of theorems due to {F}. {E}. {B}rowder and
  {R}. {W}ittmann.
\newblock {\em Advances in Mathematics}, 226:2764--2795, 2011.

\bibitem{Kohlenbach(ta2)}
U.~Kohlenbach.
\newblock {K}reisel's `shift of emphasis' and contemporary proof mining.
\newblock Chapter for forthcoming book `Intuitionism, Computation, and Proof:
  Selected Themes from the Research of G. Kreisel'.

\bibitem{Kohlenbach(ta1)}
U.~Kohlenbach.
\newblock Proof-theoretic methods in nonlinear analysis.
\newblock In B. Sirakov, P. Ney de Souza, M. Viana, editors, {\em Proceedings of the International Congress
  of Mathematicians ({I}{C}{M} 2018)}, Vol. 2, pages 61-82. World Scientific 2019.

\bibitem{KohlenbachLeustean(12)}
U.~Kohlenbach and L.~Leu\c{s}tean.
\newblock Effective metastability of {H}alpern iterates in {C}{A}{T}(0) spaces.
\newblock {\em Advances in Mathematics}, 231:2526--2556, 2012.

\bibitem{KohlenbachLeustean(12A)}
U.~Kohlenbach and L.~Leu\c{s}tean.
\newblock On the computational content of convergence proofs via {B}anach
  limits.
\newblock {\em Philosophical Transactions of the Royal Society A},
  370:3449--3463, 2012.

\bibitem{KohlenbachLeustean(14)}
U.~Kohlenbach and L.~Leu\c{s}tean.
\newblock Addendum to ``{E}ffective metastability of {H}alpern iterates in
  {C}{A}{T}(0) spaces" [{A}dv. {M}ath. 231 (5) (2012) 2526-2556].
\newblock {\em Advances in Mathematics}, 250:650--651, 2014.

\bibitem{KohlenbachSipos(19)}
U.~Kohlenbach and A.~Sipo\c{s}.
\newblock The finitary content of sunny nonexpansive retractions,
\newblock  Technical Report arXiv:1812.04940 [math.FA],  ArXiV, January 2019.


\bibitem{Koernlein(16)}
D.~K\"ornlein.
\newblock Quantitative strong convergence for the hybrid steepest descent
  method.
\newblock Technical Report arXiv:1610.00517 [math.LO], ArXiV, October 2016.


\bibitem{Koernlein(16a)}
D.~K\"ornlein.
\newblock {\em Quantitative Analysis of Iterative Algorithms in Fixed Point Theory and Convex Optimization}.
\newblock \url{http://tuprints.ulb.tu-darmstadt.de/5485/}.
\newblock PhD thesis, Technische Universit\"at Darmstadt, 2016.

\bibitem{Kreisel(67)}
G.~Kreisel.
\newblock Informal rigour and completeness proofs.
\newblock In I.~Lakatos, editor, {\em Problems in the Philosophy of
  Mathematics}, volume~47 of {\em Studies in Logic and the Foundations of
  Mathematics}, pages 138--186. Elsevier, 1967.

\bibitem{Leustean(07)}
L.~Leu\c{s}tean.
\newblock Rates of asymptotic regularity for {H}alpern iterations of
  nonexpansive mappings.
\newblock {\em Journal of Universal Computer Science}, 13:1680--1691, 2007.

\bibitem{LeusteanNicolae(16)}
L.~Leu\c{s}tean and A.~Nicolae.
\newblock Effective results on nonlinear ergodic averages in
  {C}{A}{T}($\kappa$) spaces.
\newblock {\em Ergodic Theory and Dynamical Systems}, 36:2580--2601, 2016.

\bibitem{Pinto(19)}
P.~Pinto.
\newblock Proof mining with the bounded functional interpretation.
\newblock Ph.D. thesis, Universidade de Lisboa (forthcoming).

\bibitem{Simpson(99)}
S.~G. Simpson.
\newblock {\em Subsystems of Second Order Arithmetic}.
\newblock Perspectives in Mathematical Logic. Springer-Verlag, Berlin, 1999.

\bibitem{Spector(62)}
C.~Spector.
\newblock Provably recursive functionals of analysis: a consistency proof of
  analysis by an extension of principles in current intuitionistic mathematics.
\newblock In F.~D.~E. Dekker, editor, {\em Recursive Function Theory:
  Proceedings of Symposia in Pure Mathematics}, volume~5, pages 1--27. American
  Mathematical Society, Providence, Rhode Island, 1962.

\bibitem{Wittmann(92)}
R.~Wittmann.
\newblock Approximation of fixed points of nonexpansive mappings.
\newblock {\em Archiv der Mathematik}, 58:486--491, 1992.

\bibitem{Yamada(01)}
I.~Yamada.
\newblock The hybrid steepest descent method for the variational inequality
  problem over the intersection of fixed point sets of nonexpansive mappings.
\newblock In D. Butnariu S. Reich Y. Censor, editors, {\em Inherently Parallel Algorithms in Feasibility and
  Optimization and their Applications}, pages 473--504. Elsevier, 2001.

\end{thebibliography}

\end{document}